\newtheorem{thm}{Theorem}[section]
\newtheorem*{thm*}{Theorem}
\newtheorem{cor}[thm]{Corollary}
\newtheorem{lem}[thm]{Lemma}
\newtheorem{prop}[thm]{Proposition}
\newtheorem{conj}[thm]{Conjecture}
\theoremstyle{definition}
\newtheorem{defi}[thm]{Definition}
\newtheorem{rem}[thm]{Remark}
\newtheorem{nota}[thm]{Notation}
\newtheorem{question}[thm]{Question}
\newcommand{\mathpzc}[1]{\mathcal{#1}}
\newcommand{\A}{\mathpzc{A}}
\newcommand{\B}{\mathpzc{B}}
\renewcommand{\phi}{\varphi}
\renewcommand{\epsilon}{\varepsilon}
\newcommand{\Th}{\mathrm{Th}}
\newcommand{\M}{\mathpzc{M}}
\newcommand{\C}{\mathpzc{C}}
\renewcommand{\P}{\mathcal{P}}
\newcommand{\ured}[1]{\leq_{#1}}
\newcommand{\nured}[1]{\not\leq_{#1}}
\newcommand{\uquiv}[1]{\equiv_{#1}}
\newcommand{\nuquiv}[1]{\not\equiv_{#1}}
\newcommand{\V}{\mathcal{V}}
\newcommand{\T}{\mathcal{T}}
\newcommand{\W}{\mathcal{W}}
\newcommand{\U}{\mathcal{U}}
\newcommand{\ugeq}[1]{\geq_{#1}}
\newcommand{\conc}{%
  \mathord{
    \mathchoice
    {\raisebox{1ex}{\scalebox{.7}{$\frown$}}}
    {\raisebox{1ex}{\scalebox{.7}{$\frown$}}}
    {\raisebox{.7ex}{\scalebox{.5}{$\frown$}}}
    {\raisebox{.7ex}{\scalebox{.5}{$\frown$}}}
  }
}
\newcommand{\Ss}{\mathcal{S}}
\begin{document}

\title{Levels of uniformity}

\author[R. Kuyper]{Rutger Kuyper}
\address[Rutger Kuyper]{Radboud University Nijmegen\\
Department of Mathematics\\
P.O. Box 9010, 6500 GL Nijmegen, the Netherlands.}
\email{mail@rutgerkuyper.com}
\thanks{The research of the second author was supported by NWO/DIAMANT grant 613.009.011 and by 
John Templeton Foundation grant 15619: `Mind, Mechanism and Mathematics: Turing Centenary Research Project'.}

\date{\today}

\begin{abstract}
\noindent 
We introduce a hierarchy of degree structures between the Medvedev and Muchnik lattices which allow varying amounts of non-uniformity. We use these structures to introduce the notion of the uniformity of a Muchnik reduction, which expresses how uniform a reduction is. We study this notion for several well-known reductions from algorithmic randomness. Furthermore, since our new structures are Brouwer algebras, we study their propositional theories. Finally, we study if our new structures are elementarily equivalent to each other.
\end{abstract}

\maketitle

\section{Introduction}

Over the years, the uses of the Medvedev and Muchnik lattices in computability theory have expanded far beyond their applications to intuitionistic logic, as originally intended by Medvedev. These two lattices formalise when a \emph{mass problem}, i.e.\ a set $\A \subseteq \omega^\omega$, is `easier' than another mass problem $\B$. Both say that a mass problem $\A$ is easier than a mass problem $\B$, or that $\A$ reduces to $\B$, if every function in $\B$ computes a function in $\A$. However, the Medvedev lattice imposes an additional restriction, saying that this reduction should be uniform, in the sense that the Turing machine performing the computation should be the same for every function in $\B$. On the other hand, in the Muchnik lattice we can choose a different machine for each function. Thus, the Medvedev and Muchnik lattices are the most uniform and the most non-uniform approach to reducing mass problems.

In practice, there are many reductions between mass problems that turn out to only be Muchnik reductions and not Medvedev reductions. This is especially true in algorithmic randomness, where the fact that the reductions are not uniform can often be shown using a straightforward majority vote argument. Thus, the conclusion is often that the reductions are only Muchnik reductions, i.e.\ that they are highly non-uniform. However, this is often not actually the case, but merely results from the fact that we do not yet know of a finer-grained hierarchy between Medvedev and Muchnik reducibility. This paper aims to resolve that problem by introducing exactly such a hierarchy.

In \cite{higuchi-kihara-2014}, Higuchi and Kihara introduced five structures between the Medvedev and Muchnik lattices. One of these is the lattice they call $\mathcal{D}^1_\omega$ and which we will call $\M_1$ in this paper. Roughly speaking, in this lattice the reductions do not have to be fully uniform, but the non-uniform choice has to be a $\Pi^0_1$-condition. In slightly more detail, a mass problem $\A$ $1$-reduces to a mass problem $\B$ if there is a uniformly $\Pi^0_1$-sequence $\V_0,\V_1,\dots$ covering $\B$ such that we can compute an element of $\A$ uniformly from an element $f \in \B$ \emph{together} with an $i \in \omega$ such that $f \in \V_i$.

A natural extension of this definition is to replace $\Pi^0_1$ by $\Pi^0_n$ in the informal definition just given. This is what we do in section \ref{sec-n-uni}, which yields the notion of $n$-reducibility, and the corresponding degree structures $\M_n$ for every $n \in \omega$. In that section, we will also show that the resulting degree structure is always a Brouwer algebra, a lattice with an additional implication operator which can be used to give semantics for propositional logics between intuitionistic logic and classical logic. The Medvedev and Muchnik lattices are also known to be Brouwer algebras. For $\M_1$ this was shown in \cite{higuchi-kihara-2014}, and it is the only Brouwer algebra among the intermediate degree structures studied by Higuchi and Kihara.

In section \ref{sec-maps} we study maps between $\M_n$ and $\M_m$ for $n \not= m$. We show that the natural surjection from $\M_n$ to $\M_m$ for $m > n$ preserves joins and meets, but not necessarily implications. On the other hand, we show that there are embeddings preserving joins and implications in the other direction.

Next, in section \ref{sec-uniformity} we introduce the \emph{uniformity} of a pair $(\A,\B)$ with $\A \leq_w \B$. This notion tries to capture how uniform a Muchnik reduction exactly is, as motivated above. The uniformity is the least number $n \in \omega$ such that $\A \leq_n \B$, if any such $n$ exists. Thus, the uniformity is the least $n \in \omega$ such that $\Pi^0_n$-choices suffice to make the reduction uniform.

We apply this notion of uniformity to algorithmic randomness in section \ref{sec-randomness}. Here we study the uniformity of some well-known Muchnik reductions from algorithmic randomness. This also allows us to give natural examples separating $n$-reducibility from $m$-reducibility for $n \not= m$.

As mentioned above, the structures $\M_n$ are all Brouwer algebras. We study their propositional theories as Brouwer algebras in sections \ref{sec-brouwer-1} and \ref{sec-brouwer-2}. While, just as for the Medvedev and Muchnik lattices, their theories are not intuitionistic propositional logic (IPC), we show that there are principal factors of $\M_n$ that do capture exactly IPC if $n \leq 1$ or $n \geq 4$ (a study motivated by Skvortsova's magnificent result stating that such factors exist for the Medvedev lattice). This also allows us to show that the theory of $\M_1$ is exactly Jankov's logic, the deductive closure of IPC and the weak law of the excluded middle $\neg p \vee \neg\neg p$, answering a question of Higuchi and Kihara \cite{higuchi-kihara-2014-2}. The problem remains open for $n=2$ and $n=3$.

Finally, in section \ref{sec-elementary} we study the first-order theories of the $\M_n$ as lattices. We show that $\M_n$ and $\M_m$ are not elementarily equivalent if $n \not = m$, except for the case $n,m \in \{0,1\}$ which we do not currently know how to deal with.

Our notation is mostly standard. We use $\oplus$ to denote joins or least upper bounds in lattices, and similarly $\otimes$ to denote meets or greatest lower bounds in lattices. When we write $\Psi(\tau)(n){\downarrow}$ for some Turing functional $\Psi: \omega^\omega \to \omega^\omega$, we mean that $\Psi$ halts on input $n$ in at most $|\tau|$ steps with the partial oracle $\tau$. We assume a fixed, computable pairing function $\langle n,m \rangle$. We let $\Phi_e$ be the Turing functional with index $e$. For any set $\A \subseteq \omega^\omega$, we let $C(\A)$ be the upwards closure of $\A$ under Turing reducibility, i.e.\ the set of those $f$ such that for some $g \in \A$ we have $f \geq_T g$. We fix effective listings $\{\Ss^n_e\}_{e \in \omega}$ of all $\Sigma^0_n$-classes and $\{\P^n_e = \overline{\Ss^n_e}\}_{e \in \omega}$ of all $\Pi^0_n$-classes. When we say that a sequence $\V_0,\V_1,\dots$ is uniformly $\Pi^0_n$, we mean there is a computable sequence $u_0,u_1,\dots$ such that $\V_i = \P^n_{u_i}$. We will denote concatenation both by $xy$ and by $x \conc y$.

For undefined notions from computability theory, we refer to Odifreddi \cite{odifreddi-1989}, for undefined notions from algorithmic randomness, we refer to Downey and Hirschfeldt \cite{downey-hirschfeldt-2010} and Nies \cite{nies-2008}, and for more background on the Medvedev lattice we refer to the surveys of Sorbi \cite{sorbi-1996} and Hinman \cite{hinman-2012}.

\section{The $n$-uniform degrees}\label{sec-n-uni}

In this section we will introduce the $n$-uniform degrees and prove some basic results about them.

\begin{defi}\label{defi-n-degrees}
Let $\A,\B \subseteq \omega^\omega$ and let $n \in \omega$. Then we say that $\A$ \emph{$n$-uniformly reduces to } $\B$ (notation: $\A \ured{n} \B$) if there exists a sequence $\V_0,\V_1,\dots$ of uniformly $\Pi^0_n$ sets with $\B \subseteq \bigcup_{i \in \omega} \V_i$ and a uniformly computable sequence $e_0,e_1,\dots$ such that for every $i \in \omega$ and every $f \in \B \cap \V_i$ we have $\Phi_{e_i}(f) \in \A$. If both $\A \ured{n} \B$ and $\B \ured{n} \A$ we say that $\A$ and $\B$ are \emph{$n$-uniformly equivalent} (notation: $\A \uquiv{n} \B$). We let $\M_n = \mathcal{P}\left(\omega^\omega\right) / {\uquiv{n}}$ and call its elements the \emph{$n$-uniform degrees}.
\end{defi}

We will often drop the adjective `uniform' and talk about $n$-reducibility, $n$-equivalence and the $n$-degrees instead. If $\A \leq_n \B$ and $\V_0,\V_1,\dots$ and $e_0,e_1,\dots$ are as in Definition \ref{defi-n-degrees} above, we say that these sequences \emph{witness} that $\A \leq_n \B$.

\begin{rem}\label{remark-sigma}
Note that, in Definition \ref{defi-n-degrees} we can replace $\Pi^0_n$ by $\Sigma^0_{n+1}$ without changing the concept. Namely, if there are a sequence $\U_i = \bigcup \V^i_j$ covering $\B$ with $\V_i$ uniformly $\Pi^0_n$ and a computable sequence $e_0,e_1,\dots$ such that $\Phi_{e_i}(f) \in \A$ for every $f \in \B \cap \V_i$, it is not hard to see that the sequences $\left( \V^i_j \right)_{i,j \in \omega}$ and $(e_i)_{i,j \in \omega}$ witness that $\A \leq_n \B$.
\end{rem}

Clearly the relation $\leq_n$ is reflexive on the $n$-degrees, but let us verify that it is also transitive.

\begin{prop}
$\ured{n}$ is transitive on $\mathcal{P}\left(\omega^\omega\right) \times \mathcal{P}\left(\omega^\omega\right)$, hence it induces an ordering on $\M_n$.
\end{prop}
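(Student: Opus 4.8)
The plan is to take two reductions witnessing $\A \ured{n} \B$ and $\B \ured{n} \C$ and compose them, carefully arranging the witnessing data. Suppose $\V_0,\V_1,\dots$ (uniformly $\Pi^0_n$, covering $\B$) together with $e_0,e_1,\dots$ witness $\A \ured{n} \B$, and $\W_0,\W_1,\dots$ (uniformly $\Pi^0_n$, covering $\C$) together with $d_0,d_1,\dots$ witness $\B \ured{n} \C$. The idea is that an element $g \in \C$ lies in some $\W_j$, from which $\Phi_{d_j}(g)$ is an element of $\B$; that element in turn lies in some $\V_i$, from which $\Phi_{e_i}(\Phi_{d_j}(g))$ is an element of $\A$. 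So the natural index set for the covering of $\C$ is the set of pairs $\langle i,j\rangle$, and the natural functional to apply on the piece labelled $\langle i,j \rangle$ is $g \mapsto \Phi_{e_i}(\Phi_{d_j}(g))$, whose index can be computed uniformly from $i$ and $j$ by the $s$-$m$-$n$ theorem.

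The one subtlety is the definition of the covering sets. For the pair $\langle i,j\rangle$ I would take
\[
\U_{\langle i,j\rangle} = \{ g \in \W_j : \Phi_{d_j}(g) \in \V_i \}.
\]
I need to check two things: first, that this family covers $\C$ — given $g \in \C$, pick $j$ with $g \in \W_j$, then $\Phi_{d_j}(g) \in \B$, so pick $i$ with $\Phi_{d_j}(g) \in \V_i$, and then $g \in \U_{\langle i,j\rangle}$; second, that the family is uniformly $\Pi^0_n$. For the latter, $\W_j$ is $\Pi^0_n$ uniformly in $j$, and the condition ``$\Phi_{d_j}(g) \in \V_i$'' is the statement that the (total or partial) function $\Phi_{d_j}(g)$, wherever defined, satisfies the $\Pi^0_n$ condition defining $\V_i$ — more precisely, substituting the Turing functional into a $\Pi^0_n$ formula keeps it $\Pi^0_n$ (on the relevant domain), uniformly in $i,j$. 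So $\U_{\langle i,j\rangle}$ is an intersection of two uniformly $\Pi^0_n$ conditions, hence uniformly $\Pi^0_n$. Finally, for $g \in \C \cap \U_{\langle i,j\rangle}$ we have $g \in \W_j$ so $\Phi_{d_j}(g) \in \B$, and $\Phi_{d_j}(g) \in \V_i$, so $\Phi_{e_i}(\Phi_{d_j}(g)) \in \A$, which is exactly what the composed functional computes.

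The main obstacle, and really the only place requiring care, is verifying that substituting a Turing functional into a $\Pi^0_n$ class yields a uniformly $\Pi^0_n$ condition on the index set; one has to be slightly careful because $\Phi_{d_j}(g)$ need not be total, but since we only ever use the conclusion ``$\Phi_{d_j}(g) \in \V_i$'' in conjunction with ``$g \in \W_j$'' (which for $g \in \C$ guarantees $\Phi_{d_j}(g)$ is total and in $\B$), nothing goes wrong. Once transitivity of $\ured{n}$ on $\mathcal{P}(\omega^\omega) \times \mathcal{P}(\omega^\omega)$ is established, it is immediate that $\uquiv{n}$ is an equivalence relation and that $\ured{n}$ descends to a well-defined partial order on $\M_n$, since if $\A \uquiv{n} \A'$ and $\B \uquiv{n} \B'$ and $\A \ured{n} \B$ then chaining reductions gives $\A' \ured{n} \A \ured{n} \B \ured{n} \B'$.
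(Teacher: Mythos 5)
Your overall line is the same as the paper's: cover $\C$ by sets indexed by pairs $\langle i,j\rangle$ obtained by pulling back the first reduction's cover along the second reduction's functionals, and compose. The place you flag as the "main obstacle" is in fact a genuine gap as written. The set
\[
\U_{\langle i,j\rangle} = \{ g \in \W_j : \Phi_{d_j}(g) \in \V_i \}
\]
is literally $\W_j \cap \Phi_{d_j}^{-1}(\V_i)$, and $\Phi_{d_j}^{-1}(\V_i)$ is \emph{not} $\Pi^0_n$ in general: membership requires that $\Phi_{d_j}(g)$ be total, and the totality domain of a Turing functional is a $\Pi^0_2$ class. Moreover, naively substituting $\Phi_{d_j}(g)$ into the arithmetical matrix of $\V_i$ turns each oracle query into an existential quantifier over convergence stages, which again raises the complexity. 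So for $n=1$ in particular, your covering sets need not be $\Pi^0_1$, and "substituting a Turing functional into a $\Pi^0_n$ formula keeps it $\Pi^0_n$" is false as stated.

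What \emph{is} true, and what the paper proves separately as Lemma~\ref{lem-inverse-pi}, is that there is a uniformly $\Pi^0_n$ class $\W'_{i,j}$ that agrees with $\Phi_{d_j}^{-1}(\V_i)$ \emph{on the totality domain} of $\Phi_{d_j}$; outside that domain $\W'_{i,j}$ may contain extra reals. The construction replaces "$\Phi_{d_j}(g)$ extends $\sigma$" by the $\Pi^0_1$ consistency condition "for all $m$, if $\Phi_{d_j}(g\restriction m)\restriction|\sigma|$ has converged then it equals $\sigma$," and then pushes this through the quantifiers defining $\V_i$ using that preimages commute with unions and intersections. Once you have $\W'_{i,j}$, take the cover to be $\W_j\cap\W'_{i,j}$: this covers $\C$, and for $g\in\C\cap\W_j\cap\W'_{i,j}$ the hypothesis $g\in\W_j$ guarantees $\Phi_{d_j}(g)\in\B$ (hence total), so $g\in\W'_{i,j}$ then does give $\Phi_{d_j}(g)\in\V_i$, and the composed functional lands in $\A$. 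In short: the intuition you stated parenthetically ("on the relevant domain") is exactly right, but it needs to be turned into the lemma rather than invoked as a known fact, since the class you actually wrote down is not $\Pi^0_n$.
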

\begin{proof}
Let $\A \ured{n} \B$ be witnessed by $\V_0,\V_1,\dots$ and $e_0,e_1,\dots$, and let $\B \ured{n} \C$ be witnessed by $\U_0,\U_1,\dots$ and $s_0,s_1,\dots$. For all $i,j \in \omega$, let $\W_{i,j}$ be as in Lemma \ref{lem-inverse-pi} below applied to $\V_i$ and $\Phi_{s_j}$. Now, if we let $\T_{j}$ be the class of functions $f$ for which $\Phi_{s_j}$ is total, then $\U_j \cap \C \subseteq \T_j$.
So,
\[\Phi_{s_j}(\W_{i,j} \cap \U_j \cap \C) \subseteq \V_i \cap \B.\]
Thus, $\Phi_{e_i}(\Phi_{s_j}(\W_{i,j} \cap \U_j \cap \C)) \subseteq \A$. On the other hand, the sequence $(\W_{i,j} \cap \U_j)_{i,j \in \omega}$ covers $\C$, which completes the proof.
\end{proof}

\begin{lem}\label{lem-inverse-pi}
Let $\Phi$ be a Turing functional and let $\T \subseteq \omega^\omega$ be the class of functions $f$ for which $\Phi(f)$ is total. Then for every $n \geq 1$ and every $\Pi^0_n$-class $\V$ we have that $\Phi^{-1}(\V)$ is $\Pi^0_n$ within $T$, i.e.\ there is a $\Pi^0_n$-class $\W$ such that $\Phi^{-1}(\V) = \W \cap T$. Furthermore, we can find an index for $\W$ uniformly in $n$ and indices for $\Phi$ and $\V$.
\end{lem}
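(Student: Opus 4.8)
The plan is to prove Lemma~\ref{lem-inverse-pi} by induction on $n$, with the base case $n=1$ carrying all of the real content and the inductive step being bookkeeping with the arithmetical hierarchy. Before starting, the key preliminary observation is that, thanks to the step‑bounded convention for $\Phi(\tau)(j){\downarrow}$ and the restriction to functions in $\T$, the relation ``$\sigma \prec \Phi(f)$'' admits a $\Sigma^0_1(f)$ description that is \emph{uniformly} correct on $\T$: for any string $\sigma$ of length $m$, the class $\{f : \exists \tau \prec f\ \forall j < m\ \Phi(\tau)(j){\downarrow} = \sigma(j)\}$ is honestly $\Sigma^0_1$ (uniformly in $\sigma$ and in an index for $\Phi$), and whenever $\Phi(f)$ is total it contains $f$ exactly when $\sigma \prec \Phi(f)$; off $\T$ we do not care what it does.

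For the base case $n=1$, I would write $\V = [S]$ for a computable tree $S \subseteq \omega^{<\omega}$ obtained uniformly from an index for $\V$, so that $g \in \V$ iff $g\restriction m \in S$ for all $m$. Then for $f \in \T$ the statement ``$\Phi(f)\restriction m \notin S$'' is correctly captured by the $\Sigma^0_1(f)$ condition ``there are $\tau \prec f$ and $\sigma \notin S$ with $\Phi(\tau)(j){\downarrow} = \sigma(j)$ for all $j<m$'', and hence ``$\exists m\ \Phi(f)\restriction m \notin S$'' is $\Sigma^0_1(f)$, uniformly. Let $\W$ be the $\Pi^0_1$ class complementary to it. For $f \in \T$ we get $f \in \W$ iff $\Phi(f) \in \V$, and since $\Phi^{-1}(\V) \subseteq \T$ automatically (a value in $\V \subseteq \omega^\omega$ forces $\Phi(f)$ total), this gives $\W \cap \T = \Phi^{-1}(\V)$, with an index for $\W$ produced uniformly from $n=1$ and indices for $\Phi$ and $\V$.

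For the inductive step from $n$ to $n+1$ (with $n \geq 1$), I would use the standard effective fact that every $\Sigma^0_{n+1}$ class is a computable union of $\Pi^0_n$ classes: write $\overline{\V} = \bigcup_k \P^n_{w(k)}$ with $w$ computable and obtained uniformly. Then $\Phi(f) \in \V$ iff $f \notin \Phi^{-1}(\P^n_{w(k)})$ for every $k$. Applying the induction hypothesis to each $\P^n_{w(k)}$ and $\Phi$ yields $\Pi^0_n$ classes $\W_k$, with indices uniform in $k$, such that $\Phi^{-1}(\P^n_{w(k)}) = \W_k \cap \T$; so for $f \in \T$ we have $f \notin \Phi^{-1}(\P^n_{w(k)})$ iff $f \notin \W_k$. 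Setting $\W := \bigcap_k \overline{\W_k}$ --- a computable intersection of uniformly $\Sigma^0_n$ classes, hence a $\Pi^0_{n+1}$ class with an index computed uniformly --- we get, for $f \in \T$, that $\Phi(f) \in \V$ iff $f \in \W$, and therefore $\W \cap \T = \Phi^{-1}(\V)$ as before.

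The one genuinely subtle point --- and what forces the induction rather than a single substitution into a $\Pi^0_n$ normal form --- is a parity issue: if one literally replaces the prefix of $g$ inside a $\Pi^0_n$ matrix by ``$\Phi(f)\restriction h = \rho$'', the extra existential quantifier ``$\exists \rho$'' needed to extract that prefix pushes the complexity above $\Pi^0_n$ precisely when the innermost quantifier of the normal form is universal (in particular already for $n=1$). Isolating this difficulty in the base case, where the negation must be pushed past the ``$\exists \tau \prec f$'' so that the resulting class stays $\Pi^0_1$, and then relaying it cleanly through the hierarchy, is the crux of the argument; the totality hypothesis on $\T$ is exactly what makes the base‑case $\Sigma^0_1$ description of ``$\sigma \prec \Phi(f)$'' correct, and the rest is uniformity bookkeeping.
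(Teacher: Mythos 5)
Your proof is correct, but it takes a genuinely different route from the paper's. The paper's argument is a single substitution: it observes that $\Phi^{-1}$ commutes with unions and intersections, and that the basic pre-image $\Phi^{-1}(\llbracket\sigma\rrbracket)$ is \emph{both} $\Sigma^0_1$ outright \emph{and} $\Pi^0_1$ within $\T$ (the latter witnessed by the explicit class $\{f \mid \forall n(\Phi(f\restriction n)\restriction |\sigma|{\downarrow} \to \Phi(f\restriction n)\restriction |\sigma| = \sigma)\}$). Having a $\Delta^0_1$-within-$\T$ building block means one can substitute directly into any $\Pi^0_n$ normal form for $\V$ and pick whichever description has the parity the innermost level demands, so no induction is needed. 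This is worth noting because your closing remark --- that a parity issue ``forces the induction rather than a single substitution into a $\Pi^0_n$ normal form'' --- is not quite right: the paper's proof is exactly such a single substitution, and the $\Delta^0_1$-within-$\T$ observation is what dissolves the parity problem you identify. Your proof instead isolates the parity negotiation in the base case $n=1$ (pushing the negation past ``$\exists\tau\prec f$'') and relays it up the hierarchy via $\Pi^0_{n+1}=\bigcap_k\Sigma^0_n$ and the commutation of $\Phi^{-1}$ with the outer intersection; this is correct and makes the uniformity bookkeeping arguably more explicit, at the cost of being longer than the paper's direct argument.
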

\begin{proof}
Note that $\Phi^{-1}$ commutes with unions and intersections (for this we do not even need it to be computable), and that we have that $\Phi^{-1}(\llbracket\sigma\rrbracket)$ is $\Sigma^0_1$ uniformly in $\sigma$. In fact, $\Phi^{-1}(\llbracket\sigma\rrbracket)$ is also $\Pi^0_1$ within $\T$ uniformly in $\sigma$, i.e.\ there are $\W_\sigma$ which are $\Pi^0_1$ uniformly in $\sigma$ such that $\Phi^{-1}(\llbracket\sigma\rrbracket) = \W_\sigma \cap \T$. Indeed, let $\W_\sigma$ be the $\Pi^0_1$-class
\[\{f \mid \forall n(\Phi(f \restriction n) \restriction |\sigma|{\downarrow} \to \Phi(f \restriction n) \restriction |\sigma| = \sigma\}).\]
From this we can directly construct a $\W$ as required.
\end{proof}

Next, we show that $0$-reducibility is just Medvedev-reducibility, so the bottom level of our hierarchy of reducibilities is the completely uniform Medvedev reducibility.

\begin{prop}
Medvedev-reducibility and $0$-reducibility coincide.
\end{prop}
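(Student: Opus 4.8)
The plan is to prove both inclusions. For the easy direction, suppose $\A \leq_0 \B$ is witnessed by a uniformly $\Pi^0_0$ sequence $\V_0,\V_1,\dots$ covering $\B$ together with a computable sequence $e_0,e_1,\dots$. Since $\Pi^0_0$-classes are clopen, each $\V_i$ is a finite union of basic clopen sets, and being uniformly $\Pi^0_0$ means we can compute, given $i$, a canonical index for this clopen set. The key observation is that from $f \in \B$ we can, without any oracle help, search for the least $i$ with $f \in \V_i$ (this is a computable search since membership in a clopen set with a known index is decidable). Composing this search with the functional $\Phi_{e_i}$ for the $i$ found gives a single Turing functional $\Psi$ with $\Psi(f) \in \A$ for all $f \in \B$, which is exactly a Medvedev reduction.

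For the converse, suppose $\A \leq_s \B$ via a single Turing functional $\Psi$, i.e.\ $\Psi(f) \in \A$ for every $f \in \B$. We need a uniformly $\Pi^0_0$ cover of $\B$ and an associated computable sequence of indices. The natural choice is to take the $\V_i$ to enumerate \emph{all} basic clopen sets $\llbracket \sigma \rrbracket$ for $\sigma \in \omega^{<\omega}$ (via a computable listing of finite strings), so that trivially $\bigcup_i \V_i = \omega^\omega \supseteq \B$, and for each $i$ corresponding to a string $\sigma$ we set $e_i$ to be an index for $\Psi$ itself (ignoring the extra information $i$). Then for $f \in \B \cap \V_i$ we have $\Phi_{e_i}(f) = \Psi(f) \in \A$, so these sequences witness $\A \leq_0 \B$. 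One should note that a $\Pi^0_0$-class in the sense of the fixed listing $\{\P^0_e\}$ is a clopen set, and a single basic clopen set $\llbracket\sigma\rrbracket$ is certainly of this form, with an index computable from $\sigma$; if the paper's convention for $\Pi^0_0$ only allows complements of $\Sigma^0_0$-classes, one simply takes finite unions of basic clopen sets, which does not affect the argument.

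Neither direction presents a genuine obstacle; the only point requiring a modicum of care is the first one, namely verifying that the search for the least $i$ with $f \in \V_i$ is genuinely computable from $f$ alone and halts, which it does precisely because the $\V_i$ are uniformly clopen with computable canonical indices and they cover $\B$, so such an $i$ always exists. I would phrase the proof so as to emphasise that $\Pi^0_0$-conditions carry no non-uniform content at all: the defining feature of level $0$ is that the side information `$f \in \V_i$' can be computed from $f$, which is exactly why $\M_0$ collapses to the Medvedev degrees.
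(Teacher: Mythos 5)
Your proof is correct and takes the same approach as the paper: the nontrivial direction works by observing that membership in a uniformly $\Pi^0_0$-class is decidable from $f$ together with its index, so from $f \in \B$ alone one can compute an $i$ with $f \in \V_i$ and then output $\Phi_{e_i}(f)$. One small imprecision worth noting: a clopen subset of Baire space $\omega^\omega$ need not be a finite union of basic clopen sets (that holds in $2^\omega$ by compactness, but not in $\omega^\omega$), though this does not affect your argument since decidability of membership given the index is all you actually use.
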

\begin{proof}
Clearly Medvedev-reducibility implies $0$-reducibility. Conversely, if $\A \leq_0 \B$, then by definition there are computable sequences $\sigma_0,\sigma_1,\dots$ and $e_0,e_1,\dots$ such that every $f \in \B$ starts with some string $\sigma_i$, and such that if $\sigma_i \subseteq f$ for $f \in \B$ then $\Phi_{e_i}(f) \in \A$. We can then uniformly compute an element of $\A$ from an element $f \in \B$ by computing the least $i$ such that $\sigma_i \subseteq f$ and sending $f$ to $\Phi_{e_i}(f)$.
\end{proof}

As mentioned in the introduction, for every $n$ the $n$-uniform degrees form a Brouwer algebra. This is what we prove next. For $n=0$, the Medvedev lattice, this was shown by Medvedev \cite{medvedev-1955}, and for the Muchnik lattice this was shown by Muchnik \cite{muchnik-1963}. 
For $n=1$ this result is due to Higuchi and Kihara \cite[Proposition 16]{higuchi-kihara-2014}, but the general proof below is our own.

Whenever we have a map $\alpha: \mathcal{P}\left(\omega^\omega\right) \to \mathcal{P}\left(\omega^\omega\right)$, we say that $\alpha$ \emph{induces} a map from $\M_n$ to $\M_m$ if, whenever $\A \equiv_n \B$, we have that $\alpha(\A) \equiv_m \alpha(\B)$. In this case we implicitly identify $\alpha$ with its induced map from $\M_n$ to $\M_m$ sending the $n$-degree of $\A$ to the $m$-degree of $\alpha(\A)$.

\begin{prop}
For every $n \in \omega$ the $n$-uniform degrees form a Brouwer algebra.
\end{prop}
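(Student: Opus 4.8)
The plan is to equip $\M_n$ with the four ingredients of a Brouwer algebra: the lattice operations (least upper bound $\oplus$ and greatest lower bound $\otimes$), a least and a greatest element, and a Brouwer implication $\to$ characterised by the adjunction $\A \to \B \ured{n} \C \iff \B \ured{n} \A \oplus \C$. Distributivity will then be automatic: for fixed $\A$ the map $\C \mapsto \A \oplus \C$ is a right adjoint (with left adjoint $\B \mapsto \A \to \B$), hence preserves binary meets, so $\A \oplus (\C \otimes \C') \uquiv{n} (\A \oplus \C) \otimes (\A \oplus \C')$, and a single distributive law forces a lattice to be distributive. Moreover $\oplus$ and $\otimes$, being suprema and infima in $\M_n$, automatically respect $\uquiv{n}$, and so does $\to$ once the adjunction holds (it picks out, up to $\uquiv{n}$, the least $\C$ with $\B \ured{n} \A \oplus \C$). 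So it suffices to handle the lattice operations, the bounds, and the implication.

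For the lattice operations I would take the usual Medvedev/Muchnik representatives: $\A \oplus \B := \{ f \oplus g : f \in \A,\ g \in \B\}$ and $\A \otimes \B := \{ 0 \conc f : f \in \A\} \cup \{ 1 \conc g : g \in \B\}$; each is visibly an upper, resp.\ lower, bound of $\A$ and $\B$ via the trivial cover $\{\omega^\omega\}$ and the obvious functionals. For the universal properties one uses two uniform closure properties of the classes $\Pi^0_n$: closure under finite intersection, and closure under prepending a fixed finite string to the members of a class. Concretely, if covers $(\V_i)_i$ and $(\U_j)_j$ of $\C$ witness $\A \ured{n} \C$ and $\B \ured{n} \C$ via functionals $(e_i)_i$ and $(s_j)_j$, then the cover $(\V_i \cap \U_j)_{i,j}$ of $\C$ together with the functionals $f \mapsto \Phi_{e_i}(f) \oplus \Phi_{s_j}(f)$ witnesses $\A \oplus \B \ured{n} \C$; dually, if covers $(\V_i)_i$ of $\A$ and $(\U_j)_j$ of $\B$ witness $\C \ured{n} \A$ and $\C \ured{n} \B$, then the cover of $\A \otimes \B$ whose members are the classes $\{ 0 \conc f : f \in \V_i\}$ and $\{ 1 \conc g : g \in \U_j\}$, with the corresponding functionals, witnesses $\C \ured{n} \A \otimes \B$. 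For the bounds, the $n$-degree of $\omega^\omega$ is least (the identity functional reduces it to anything over the cover $\{\omega^\omega\}$) and the $n$-degree of $\emptyset$ is greatest (anything reduces to $\emptyset$, vacuously, over the constant cover $(\emptyset)_i$); for $n = 0$ this already reproves Medvedev's classical result, and I would phrase everything uniformly in $n$.

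The crux is the implication. I would define
\[
  \A \to \B := \Bigl\{\, \langle u, v\rangle \conc h \;:\; h \in \omega^\omega,\ \forall f \in \A\ \exists i\ \bigl(f \oplus h \in \P^n_{u_i}\bigr),\ \text{and}\ \forall i\ \forall f \in \A\ \bigl(f \oplus h \in \P^n_{u_i} \to \Phi_{v_i}(f \oplus h){\downarrow} \in \B\bigr) \,\Bigr\},
\]
where $u, v$ code computable sequences $(u_i)_i$ and $(v_i)_i$. Informally, $\langle u,v\rangle \conc h$ records an $h$ together with a reduction of $\B$ to $\A \oplus \{h\}$ whose non-uniformity is controlled by the cover of $\A$ formed by the $h$-sections $\{ f : f \oplus h \in \P^n_{u_i}\}$ of the plain $\Pi^0_n$ classes $\P^n_{u_i}$. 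Two claims give the adjunction. First, $\B \ured{n} \A \oplus (\A \to \B)$: take the cover $\W_i := \{\, f \oplus (\langle u, v\rangle \conc h) : f \oplus h \in \P^n_{u_i}\,\}$ of $\A \oplus (\A \to \B)$, which is uniformly $\Pi^0_n$ since $f$, $h$ and the number $u_i$ can all be read off computably from the element (they literally occur in it), so that $\W_i$ is the preimage of a universal $\Pi^0_n$ relation under a computable map; on $\W_i$ one applies $\Phi_{v_i}$ to $f \oplus h$, and the second clause of the definition puts the output in $\B$. Second, minimality: if a uniformly $\Pi^0_n$ cover $(\W_j = \P^n_{m_j})_j$ of $\A \oplus \C$ together with functionals $(s_j)_j$ witnesses $\B \ured{n} \A \oplus \C$, then over the trivial cover of $\C$ one maps each $h \in \C$ to $\langle (m_j)_j, (s_j)_j\rangle \conc h$, which lies in $\A \to \B$ precisely because $(\W_j)_j$ covers $\A \oplus \C \supseteq \{ f \oplus h : f \in \A\}$ and the $s_j$ map $(\A \oplus \C) \cap \W_j$ into $\B$; hence $\A \to \B \ured{n} \C$. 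Combining: $\B \ured{n} \A \oplus \C$ implies $\A \to \B \ured{n} \C$ by minimality, and conversely $\A \to \B \ured{n} \C$ gives $\A \oplus (\A \to \B) \ured{n} \A \oplus \C$ by monotonicity of $\oplus$, whence $\B \ured{n} \A \oplus \C$ by the first claim and transitivity of $\ured{n}$.

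The step I expect to be the real obstacle — and the reason $\A \to \B$ needs a more careful definition here than in the Medvedev lattice — is the interaction between the $\Pi^0_n$ covers and the \emph{currying}. A Turing functional cannot, given $f \oplus h$, locate an index $j$ with $f \oplus h \in \W_j$, since the classes $\W_j$ are only $\Pi^0_n$; so each element of $\A \to \B$ must carry a cover of $\A$ with it. But the section at $h$ of a plain $\Pi^0_n$ cover of $\A \oplus \C$ is in general merely $\Pi^0_n(h)$, so insisting on a plain $\Pi^0_n$ cover of $\A$ inside $\A \to \B$ would be too weak and would break minimality; encoding instead the $h$-sections of plain $\Pi^0_n$ classes of the combined space — which is exactly what the condition $f \oplus h \in \P^n_{u_i}$ achieves — is the compromise that makes both claims work, and it is harmless for the first claim precisely because $h$ occurs in the element, so that the relativisation unfolds into a genuine $\Pi^0_n$ cover of $\A \oplus (\A \to \B)$. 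Pinning down these encodings and checking the attendant uniformities is the one place that demands care; everything else is a routine adaptation of the Medvedev and Muchnik arguments.
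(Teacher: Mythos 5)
Your proposal is correct and takes essentially the same route as the paper: the lattice operations are inherited from the Medvedev lattice (checked against covers), and the implication is defined as the set of ``$\langle \text{cover index}, \text{functional index}\rangle$ concatenated with a witness $h$'' such that the coded $\Pi^0_n$-cover of $\A \oplus \{h\}$ together with the coded functionals witnesses $\B \ured{n} \A \oplus \{h\}$, with the adjunction verified in both directions exactly as you do. The only cosmetic divergence is the encoding ($\langle u,v\rangle \conc h$ versus the paper's $u \conc e \conc f$) and the fact that, when establishing $\B \ured{n} \A \oplus (\A\to\B)$, the paper's version of your $\W_i$ explicitly includes the guard ``if $\{u\}(i){\downarrow}$'' so that $\W_i$ is genuinely uniformly $\Pi^0_n$ without tacitly assuming $u$ codes a total function — a refinement you would want to add to make your ``preimage of a universal $\Pi^0_n$ relation'' remark watertight.
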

\begin{proof}
The join and meet are induced by the same set-operation as in the Medvedev lattice, i.e.\ they are induced by the operations
\[\A \oplus \B = \left\{f \oplus g \mid f \in \A, g \in \B\right\}\]
and
\[\A \otimes \B = 0 \conc \A \cup 1 \conc \B\]
for $\A, \B \subseteq \omega^\omega$. The proofs are a straightforward generalisation of those for the Medvedev lattice, see e.g. Sorbi \cite[Theorem 1.3]{sorbi-1996}.

For the implication, consider the operation induced by
\begin{align*}
\A \to_n \B = \bigg\{u \conc e \conc f \mid &\left(\P^n_{\{u\}(i)}\right)_{i \in \omega} \text{ covers } \A \oplus \{f\}\\
&\text{and } \Phi_{\{e\}(i)}\left((\A \oplus \{f\}) \cap \P^n_{\{u\}(i)}\right) \subseteq \B\bigg\}.
\end{align*}
Here, $\left(\P^n_i\right)_{e \in \omega}$ is an effective enumeration of all $\Pi^0_n$-classes, as mentioned in the introduction.

Then $\A \oplus (\A \to_n \B) \ugeq{n} \B$: for this, consider the sequence $\V_0,\V_1,\dots$, where $\V_i$ consists of those $g \oplus (u \conc e \conc f)$ such that, if $\{u\}(i){\downarrow}$, then we have that $g \oplus f \in \P^n_{u(i)}$. Then each $\V_i$ is a $\Pi^0_n$-class, and if $g \oplus (u \conc e \conc f) \in (\A \oplus (\A \to_n \B)) \cap \V_i$ then $\Phi_{\{e\}(i)}(g \oplus f) \in \B$. Thus if we let $k_i$ be an index for the Turing functional sending $g \oplus (u \conc e \conc f)$ to $\Phi_{\{e\}(i)}(g \oplus f)$, then $\V_0,\V_1,\dots$ and $k_0,k_1,\dots$ witness that $\A \oplus (\A \to_n \B) \ugeq{n} \B$.

Conversely, if $\A \oplus \C \ugeq{n} \B$, fix $u$ and $e$ such that $\P^n_{u(0)},\P^n_{u(1)},\dots$ and $e(0),e(1),\dots$ witness this.
Then, for every $f \in \C$ we have $u \conc e \conc f \in \A \to_n \B$, which shows that $\A \oplus C \ugeq{n} \B$ if and only if $\C \ugeq{n} \A \to_n \B$.

We can now also easily show that $\to_n$ induces a well-defined operation on $\M_n$: if $\A_1 \uquiv{n} \A_2$ and $\B_1 \uquiv{n} \B_2$ we have that
\begin{align*}
\A_1 \to_n \B_1 \ured{n} \A_2 \to_n \B_2
&\Leftrightarrow \B_1 \ured{n} \A_1 \oplus (\A_2 \to_n \B_2)\\
&\Leftrightarrow \B_2 \ured{n} \A_2 \oplus (\A_2 \to_n \B_2)\\
&\Leftrightarrow \A_2 \to_n \B_2 \ured{n} \A_2 \to_n \B_2,
\end{align*}
where we use that we already know that $\oplus$ induces a well-defined operation on $\M_n$, as argued above. That $\A_2 \to_n \B_2 \ured{n} \A_1 \to_n \B_1$ follows in the same way.
\end{proof}

Note that Muchnik reducibility is implied by $n$-reducibility for every $n \in \omega$. To simplify the notation in the remainder of this paper we introduce the following notation.

\begin{nota}
Let $\ured{\omega}$, $\uquiv{\omega}$, $\M_\omega$ and $\to_\omega$ denote respectively Muchnik reducibility, Muchnik equivalence, the Muchnik lattice and the implication in the Muchnik lattice.
\end{nota}

Let us conclude this section by remarking that, instead of just looking at $n \in \omega$, we could also make a version of Definition \ref{defi-n-degrees} where we look at all ordinals $\alpha < \omega_1^\mathrm{CK}$. We expect many of the results in this paper hold in this more general setting, but also expect the proofs will get more technical. So, for reasons of clarity we have decided to restrict ourselves to $n \in \omega$, which we think already covers the most important part.

\section{Maps between $\M_n$ and $\M_m$}\label{sec-maps}

In this section we will show that there are natural maps between $\M_n$ and $\M_m$ for $n,m \in \omega + 1$. First we show that the natural surjection from $\M_n$ to $\M_m$ for $n < m$ preserves the lattice structure, but not the Brouwer algebra structure. This is known for the Medvedev and Muchnik lattices from Muchnik \cite{muchnik-1963}.

\begin{prop}
Let $0 \leq n \leq m \leq \omega$. Then the surjection from $\M_n$ onto $\M_m$ induced by the identity map on $\mathcal{P}(\omega^\omega)$ preserves $0, 1, \oplus$ and $\otimes$, but not necessarily $\to$.
\end{prop}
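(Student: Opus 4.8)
The plan is to split the statement into its two halves: the positive part (preservation of $0$, $1$, $\oplus$, $\otimes$) and the negative part (failure of $\to$ in general), and to treat them separately.

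For the positive part, I would first observe that $\leq_n$ refines $\leq_m$ whenever $n \leq m$, simply because every uniformly $\Pi^0_n$ sequence is a fortiori uniformly $\Pi^0_m$ (one can pass to an equivalent $\Pi^0_m$ index uniformly); hence $\A \equiv_n \B$ implies $\A \equiv_m \B$, so the identity on $\mathcal{P}(\omega^\omega)$ does descend to a well-defined surjection $\M_n \twoheadrightarrow \M_m$. Since $0$ and $1$ are represented in every $\M_n$ by the same sets (say $0 = \{f : f \text{ computable}\}$ or $\omega^\omega$, and $1 = \emptyset$), and since $\oplus$ and $\otimes$ are induced by the \emph{same} set operations $\A \oplus \B$ and $\A \otimes \B = 0{\conc}\A \cup 1{\conc}\B$ in all the $\M_n$ (including $\M_\omega$), preservation of these four is essentially immediate: if $\delta$ denotes the surjection and $\alpha$ is any one of these set operations, then $\delta(\alpha(\A,\B)) = [\alpha(\A,\B)]_m = \alpha([\A]_m,[\B]_m) = \alpha(\delta(\A),\delta(\B))$, where the middle equality is exactly the statement (proved in the previous proposition, or in Sorbi's survey for the underlying $\M_\omega$ case) that these set operations induce join and meet in each $\M_n$. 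So this half is a short bookkeeping argument once the refinement observation is in place.

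For the negative part, the cleanest route is to exhibit a single counterexample, and the natural choice is to contrast $\M_0$ (Medvedev) with $\M_\omega$ (Muchnik), where the failure of $\to$-preservation is classical. It suffices to find $\A, \B$ with $\A \to_0 \B \not\equiv_\omega \A \to_\omega \B$, since then the surjection $\M_0 \to \M_\omega$ does not send the $0$-degree of $\A \to_0 \B$ to the $\omega$-degree of $\A \to_\omega \B$; for intermediate $n < m$ one can then either reuse the same $\A, \B$ or note that the argument relativises. A standard example is to take $\A$ and $\B$ so that $\A \leq_\omega \B$ holds non-uniformly (so $\A \to_\omega \B \equiv_\omega 0$, the top/bottom witnessing that the implication trivializes Muchnik-wise) while $\A \not\leq_0 \B$ (so $\A \to_0 \B$ is not Medvedev-below $0$, i.e.\ not the trivial degree in $\M_0$); since the surjection preserves $0$, if it preserved $\to$ we would get $\A \to_0 \B \equiv_0 0$, a contradiction. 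Concretely one can take $\A = \B$ to be a mass problem that is Muchnik-equivalent to $0$ but not Medvedev-equivalent to $0$ — e.g.\ the set of functions of PA degree, or $2^\omega \setminus \{f\}$ tricks — or more simply any $\B$ with no computable member such that every member computes a member of $\A$ non-uniformly; the two-element antichain example $\B = \{f_0\} \cup \{f_1\}$ with $\A = \{f_0 \oplus f_1\}$ (where $f_0, f_1$ are chosen so that neither computes the other, but majority-vote-style one can still recover $\A$ Muchnik-wise — actually here one uses $\A = \{g : g \le_T f_0 \text{ or } g \le_T f_1\}$ style tricks) works.

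The main obstacle I anticipate is not the positive part, which is routine, but pinning down the \emph{simplest} honest counterexample for the negative part and making sure it genuinely separates $\to_n$ from $\to_m$ rather than just $\to_0$ from $\to_\omega$ — in particular one wants the example to work, or to be easily adapted, for every pair $n < m \le \omega$, or else to remark explicitly that it suffices to do the extreme case plus an appeal to the fact that $\M_n$ sits between $\M_0$ and $\M_\omega$. A safe write-up would: (i) give the refinement lemma and the short computation for $0,1,\oplus,\otimes$; (ii) for the failure of $\to$, reduce to showing $\delta$ does not preserve $\to$ by finding $\A,\B$ with $\A \to_n \B \nequiv_m \A \to_m \B$; (iii) produce $\A, \B$ with $\A \le_m \B$ but $\A \nleq_n \B$, so that $\A \to_m \B \equiv_m 0$ while $\A \to_n \B \nequiv_n 0$ — e.g.\ using the classical Medvedev/Muchnik separation (a single mass problem such as the $\Pi^0_1$ class of $\{0,1\}$-valued DNC functions, or $\mathrm{PA}$, which is Muchnik-trivial-over-suitable-base but Medvedev-nontrivial) and observing it relativizes up the hierarchy; and (iv) conclude by preservation of $0$ from the first half.
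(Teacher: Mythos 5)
Your positive half matches the paper's proof (the paper is even terser, just noting that $\oplus$ and $\otimes$ are induced by the same set operations), and your explicit remark that $\leq_n$ refines $\leq_m$, so the identity does induce a well-defined surjection, is a useful observation the paper leaves implicit. Your negative half has the right skeleton — find $\A \leq_m \B$ with $\A \not\leq_n \B$ and compare implications — but there are three issues worth flagging.

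First, a direction slip: in a Brouwer algebra $\A \oplus \C \geq \B$ iff $\C \geq \A \to \B$, so $\A \to \B$ is the bottom element precisely when $\A \geq \B$. Thus from $\A \leq_m \B$ you get $\B \to_m \A \equiv_m \omega^\omega$, not $\A \to_m \B \equiv_m 0$. The paper uses $\B \to_m \A$ throughout.

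Second, and more substantively, your step (iv) is incomplete as stated. Knowing $\B \to_n \A \nuquiv{n} \omega^\omega$ does not by itself rule out $\B \to_n \A \uquiv{m} \omega^\omega$, since $\uquiv{m}$ is coarser; so ``preservation of $0$'' alone does not yield a contradiction. The paper closes this gap by observing that a mass problem is $n$-equivalent to $\omega^\omega$ iff it contains a computable element; hence $\B \to_n \A \nuquiv{n} \omega^\omega$ means it has \emph{no} computable element, and therefore it is not even Muchnik-equivalent to $\omega^\omega$, let alone $m$-equivalent. You need some such argument to finish.

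Third, the existence of pairs $\A \leq_m \B$, $\A \not\leq_n \B$ for every $n < m$ is not something that ``relativizes up the hierarchy'' in an obvious way, and the specific examples you sketch are shaky: with $\B = \{f_0\} \cup \{f_1\}$ and $\A = \{f_0 \oplus f_1\}$ for Turing-incomparable $f_0, f_1$ you do not even have $\A \leq_w \B$, and taking $\A$ to be the union of the two lower cones makes $\A \leq_\M \B$ trivially by inclusion, so there is no separation. The paper instead forward-references Corollary~\ref{cor-proper}, which is proved later via Theorem~\ref{thm-dnc} (the uniformity of $n$-DNC below $n$-randomness is exactly $n$); that is genuinely the content needed, not a soft relativization of the $\M_0$ vs.\ $\M_\omega$ case.
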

\begin{proof}
The preservation of $\oplus$ and $\otimes$ follows directly from the fact that they are induced by the same set-operations. That implication is not preserved follows from the fact proven below in Corollary \ref{cor-proper}
that for every $m$ there are $\A,\B$ such that $\A \ured{m} \B$ but $\A \nured{n} \B$ for all $n < m$; therefore $\B \to_m \A \uquiv{m} \omega^\omega$ while $\B \to_n \A \nuquiv{n} \omega^\omega$. Thus, $\B \to_n \A$ does not contain a computable element and is therefore not even Muchnik-equivalent to $\B \to_m \A$.
\end{proof}

It is known from Sorbi \cite{sorbi-1990} that there is an embedding of the Muchnik lattice into the Medvedev lattice preserving joins and implications.
Higuchi and Kihara \cite[Corollary 42]{higuchi-kihara-2014} showed that $\M_1$ also embeds into $\M$ as a partially ordered set. In fact, this embedding can be easily seen to preserve joins and implications, as we explain next.

\begin{thm}{\rm (\cite[Corollary 42]{higuchi-kihara-2014})}\label{thm-m1-into-m}
There is an embedding of $\M_1$ into $\M$ preserving joins and implications, induced by
\[\alpha(\A) = \{f \mid p(f) \in \A \text{ and } m(f) < \infty\},\]
where $m(f) = |\{i \mid f(i) = 0\}|$ and $p(f)$ is $\left(f \restriction [k,\infty)\right) - 1$ for the least $k$ such that $m(f \restriction [k,\infty)) = 0$.
Furthermore, $\alpha(\A) \uquiv{1} \A$ for all $\A$.
\end{thm}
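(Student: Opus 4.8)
The plan is to verify in turn the three assertions: (i) that $\alpha$ is well-defined and order-preserving on $\M_1$, (ii) that $\alpha(\A) \uquiv{1} \A$ for all $\A$, and (iii) that $\alpha$ preserves joins and implications. Since the underlying embedding is already known from \cite[Corollary 42]{higuchi-kihara-2014}, the real content here is the last two items. I would start with (ii), as it does most of the work for the rest. Given $f \in \alpha(\A)$, the value $m(f)$ is finite, so the least $k$ with $m(f \restriction [k,\infty)) = 0$ exists; and from $f$ together with that $k$ we can compute $p(f) \in \A$ by a single Turing functional. The point is that the \emph{uniformly $\Pi^0_1$} family $\V_k = \{ f \mid m(f \restriction [k,\infty)) = 0 \}$ covers $\alpha(\A)$, since every $f \in \alpha(\A)$ lies in $\V_k$ for $k$ its last zero plus one; and on $\alpha(\A) \cap \V_k$ the map $f \mapsto (f \restriction [k,\infty)) - 1$ lands in $\A$. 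Hence $\A \ured{1} \alpha(\A)$. For the converse, $\alpha(\A) \ured{1} \A$ (in fact $\ured{0}$): from $g \in \A$ one computes $0 \conc g'$ where $g' = g + 1$, which lies in $\alpha(\A)$ with $m = 1$ and $p$ recovering $g$. So $\alpha(\A) \uquiv{1} \A$, and in particular $\alpha$ is well-defined and an order-embedding on $\M_1$ because $\A \ured{1} \B$ iff $\alpha(\A) \ured{1} \alpha(\B)$, while injectivity into $\M$ is the cited result.

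Next I would treat joins. The inequality $\alpha(\A \oplus \B) \ured{0} \alpha(\A) \oplus \alpha(\B)$ is easy: from $f \oplus g$ with $f \in \alpha(\A)$, $g \in \alpha(\B)$ one computes $p(f) \oplus p(g) \in \A \oplus \B$ after locating the finitely many zeros of $f$ and of $g$ separately, and then codes the result back into $\alpha(\A \oplus \B)$ by prefixing a single $0$ and adding $1$; here one uses that a join of two functions with finitely many zeros, suitably coded, again has finitely many zeros, and that the two search problems can be run independently. For the reverse inequality $\alpha(\A) \oplus \alpha(\B) \ured{0} \alpha(\A \oplus \B)$, from $h \in \alpha(\A \oplus \B)$ one computes $p(h) \in \A \oplus \B$, i.e.\ $p(h) = f_0 \oplus g_0$ with $f_0 \in \A$, $g_0 \in \B$, and then produces $(0 \conc (f_0+1)) \oplus (0 \conc (g_0 + 1)) \in \alpha(\A) \oplus \alpha(\B)$. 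Both directions are uniform (no $\Pi^0_1$-choice needed beyond what is already inside $\alpha$), so $\alpha$ preserves $\oplus$ up to $\uquiv{0}$, hence a fortiori up to $\uquiv{1}$.

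The implication is the main obstacle. By the defining adjunction for Brouwer algebras, $\alpha$ preserves $\to$ iff $\alpha(\A \to_1 \B) \uquiv{1} \alpha(\A) \to_0 \alpha(\B)$, and using $\alpha(\A) \uquiv{1} \A$ together with the surjectivity of $\alpha$ up to $\uquiv{1}$, this reduces to showing that for all $\A, \B, \C$ one has $\C \ured{1} \A \to_1 \B$ iff $\alpha(\C) \ured{0} \alpha(\A) \to_0 \alpha(\B)$, i.e.\ (via the adjunctions on each side) that $\A \oplus \C \ured{1} \B$ iff $\alpha(\A) \oplus \alpha(\C) \ured{0} \alpha(\B)$ --- and for the purposes of this reduction one may as well prove directly that $\alpha$ is \emph{full} in the sense that any $\ured{0}$-reduction $\alpha(\A) \oplus \alpha(\C) \ured{0} \alpha(\B)$ can be pulled back to a $\ured{1}$-reduction $\A \oplus \C \ured{1} \B$, and conversely pushed forward. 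The forward direction (pullback) is the delicate one: given a single Turing functional $\Psi$ witnessing $\alpha(\A) \oplus \alpha(\C) \ured{0} \alpha(\B)$, one is handed $f \in \A$ and $g \in \C$, must first code them as elements $\hat f \in \alpha(\A)$, $\hat g \in \alpha(\C)$ of the form $0 \conc (f+1)$, $0\conc(g+1)$, apply $\Psi$ to get $h = \Psi(\hat f \oplus \hat g) \in \alpha(\B)$, and then extract $p(h) \in \B$ --- but extracting $p(h)$ requires knowing $m(h)$, an honestly $\Pi^0_1$ piece of information about $h$, which is exactly the one bit of nonuniformity that $\ured{1}$ allows: the uniformly $\Pi^0_1$ family is $\{ h \mid m(h \restriction [k,\infty)) = 0\}$ pulled back along $\Psi \circ (\text{coding})$, which by Lemma \ref{lem-inverse-pi} is again uniformly $\Pi^0_1$ on the relevant domain. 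So the reduction needs precisely $\Pi^0_1$-choice and no more, and the converse (pushforward) of a $\ured{1}$-reduction to a $\ured{0}$-reduction works because the element of $\alpha(\A) \oplus \alpha(\C)$ being fed in \emph{carries its own} index witnessing which $\Pi^0_1$-class its projection lies in --- indeed, membership of $0 \conc (f+1)$ in $\alpha(\A)$ forces the zero-pattern, so the $\Pi^0_1$-choice is determined computably from the input. Assembling these pieces gives $\alpha(\A \to_1 \B) \uquiv{1} \alpha(\A) \to_0 \alpha(\B)$, completing the proof.
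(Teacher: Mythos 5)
Your computation of $\alpha(\A) \uquiv{1} \A$ and of $\C \geq_\M \alpha(\C)$ is correct (modulo a small point: your $\Pi^0_1$ cover $\V_k = \{f \mid m(f\restriction[k,\infty)) = 0\}$ should additionally require $k = 0$ or $f(k-1) = 0$, otherwise $(f\restriction[k,\infty)) - 1$ is a proper tail of $p(f)$, not $p(f)$, and need not lie in $\A$). The two substantive steps, however, contain genuine gaps.

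\textbf{Joins.} The inequality $\alpha(\A \oplus \B) \ured{0} \alpha(\A) \oplus \alpha(\B)$ is precisely the \emph{hard} direction, not the easy one. You write that from $f \oplus g$ one computes $p(f)\oplus p(g)$ ``after locating the finitely many zeros of $f$ and of $g$ separately'' and then re-codes by ``prefixing a single $0$ and adding $1$.'' Locating the last zero of $f$ is a $\Pi^0_1$ question, not a computable one: you can never confirm that no further zeros are coming. So the output cannot have the fixed shape $0 \conc (x+1)$, since you cannot first finish the search and then start emitting $x+1$. What is actually needed is a stage-by-stage construction that emits tentative values of $(p(f)\oplus p(g))+1$ and ``resets'' with a fresh $0$ each time a new zero appears in $f$ or $g$, so that the position of the last zero of the output encodes the $\Pi^0_1$ information that was needed. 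That is precisely the content the paper supplies with its auxiliary counter $k_s$; your proposal elides this construction entirely. The opposite inequality $\alpha(\A) \oplus \alpha(\B) \leq_\M \alpha(\A\oplus\B)$ is the easy one: it follows from order-preservation alone, since $\A \oplus \B \geq_1 \A, \B$.

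\textbf{Implications.} The target identity is $\alpha(\A \to_1 \B) \equiv_\M \alpha(\A) \to_\M \alpha(\B)$; it is an equivalence in $\M$, not a $1$-equivalence, so the ``$\uquiv{1}$'' you write is off. More seriously, the reduction to testing only $\D = \alpha(\C)$ is not justified: $\alpha$ is injective from $\M_1$ into $\M$ but is nowhere near surjective, so verifying the adjunction against images of $\alpha$ does not verify it against arbitrary $\D \in \M$. Finally, the ``pushforward'' claim fails outright. You assert that the $\Pi^0_1$-choice is ``determined computably from the input'' because elements of $\alpha(\A)$ ``carry their own index,'' exemplified by $0 \conc (f+1)$. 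But $\alpha(\A)$ consists of \emph{all} functions with $p$-projection in $\A$ and finitely many zeros, with arbitrary finite zero patterns; for a general element, the position of the last zero is exactly the non-computable $\Pi^0_1$ information, and the reduction you describe would require \emph{three} independent $\Pi^0_1$ choices (last zero of $\hat f$, last zero of $\hat g$, and the cover index from the $1$-reduction) while the output is allowed only one. Folding these into a single $\Pi^0_1$ choice encoded in the output's zero pattern is possible but again needs a genuine construction, which you do not give.

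The paper's route for the implication is quite different and much shorter, and you should note why it sidesteps all of this. Having $\alpha(\A) \to_\M \alpha(\B) \leq_\M \alpha(\A \to_1 \B)$ from order/join preservation, the nontrivial direction is obtained abstractly: since $\alpha(\A) \uquiv{1} \A$ and $\alpha(\B) \uquiv{1} \B$ and the $\M_1$-implication respects $\uquiv{1}$, one gets $\alpha(\A) \to_1 \alpha(\B) \uquiv{1} \A \to_1 \B$; applying the well-defined map $\alpha : \M_1 \to \M$ gives $\alpha(\alpha(\A) \to_\M \alpha(\B)) \equiv_\M \alpha(\A \to_1 \B)$ (using also $\to_\M \geq_1 \to_1$); and then the key one-liner $\C \geq_\M \alpha(\C)$ (by $f \mapsto f+1$) applied to $\C = \alpha(\A) \to_\M \alpha(\B)$ finishes the inequality. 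That trick replaces your ``fullness'' argument and avoids any quantification over arbitrary $\D \in \M$.
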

\begin{proof}
We only prove that joins and implications are preserved, using the fact from the proof of \cite[Corollary 42]{higuchi-kihara-2014} that the map induced by $\alpha$ is well-defined, preserves the order and satisfies $\alpha(\A) \uquiv{1} \A$. Thus, we know that $\alpha(\A) \oplus \alpha(\B) \leq_\M \alpha(\A \oplus \B)$. Conversely, given $f \in \alpha(\A)$ and $g \in \alpha(\B)$, we show how to uniformly compute a function $h \in \alpha(\A \oplus \B)$. For this, we use an auxiliary number $k_s$, where we set $k_{-1} = 0$. At stage $s$, we define $h(2s)$ and $h(2s+1)$. First, check if either $f(s) = 0$ or $g(s) = 0$. If so, let $h(2s) = h(2s+1) = 0$ and let $k_{s} = 0$. Otherwise, let $h(2s) = f(k_{s-1})$, $h(2s+1) = g(k_{s-1})$ and $k_s = k_{s-1} + 1$. Then it can be directly verified that $h \in \alpha(\A \oplus \B)$, and since we computed $h$ uniformly in $f$ and $g$ we have $\alpha(\A) \oplus \alpha(\B) \equiv_\M \alpha(\A \oplus \B)$.

For the implication, again we already know that $\alpha(\A) \to \alpha(\B) \leq_\M \alpha(\A \to \B)$ from the fact that the order is preserved. Conversely, we have that $\alpha(\A) \to \alpha(\B) \uquiv{1} \A \to \B$ because $\alpha(\A) \uquiv{1} \A$ and $\alpha(\B) \uquiv{1} \B$. Thus, $\alpha(\alpha(\A) \to \alpha(\B)) \equiv_\M \alpha(\A \to \B)$. However, $\C \geq_\M \alpha(\C)$ holds for any $\C$ by sending $f$ to $f+1$, so we see that $\alpha(\A) \to \alpha(\B) \geq_\M \alpha(\A \to \B)$, as desired.
\end{proof}

We now show that we have such embeddings for all $n,m \in \omega + 1$ with $m \leq n$.

\begin{thm}\label{thm-embed}
Let $n \in \omega+1$. Then there exists a map $u_n: \mathcal{P}(\omega^\omega) \to \mathcal{P}(\omega^\omega)$ such that for every $m \leq n$, the map $u_{n,m}: \M_n \to \M_m$ induced by $u_n$ is a well-defined embedding preserving joins and implications.
Furthermore, $u_n(\A) \uquiv{n} \A$ for all $n$ and $\A$.
\end{thm}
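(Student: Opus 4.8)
The plan is to generalise the map $\alpha$ from Theorem~\ref{thm-m1-into-m} so that it "hides" an $n$-many-dimensional non-uniform choice inside a single function, in a way that makes it visible to an $m$-reduction only after the cover has been unwound $n$ times. Recall that the $\M_1$-into-$\M$ embedding works because membership in a $\Pi^0_1$-class $\V_i$ can be encoded by padding a function $f$ with infinitely many extra coordinates, whose values record (in the limit) the index $i$ of the piece of the cover that $f$ belongs to; a Medvedev reduction can then read off $i$ directly from the function. For $\M_n \to \M_m$ with $m \leq n$ we want to encode an "$(n-m)$-level-harder" choice. The key idea is iteration: let $u_n$ be built by composing a single-step map $\beta$ (essentially $\alpha$, but set up to be applied in the $n$-uniform world) with itself, so that $u_n = \beta^{\circ(n-m)}$ when we view it as landing in $\M_m$; more precisely, $u_n$ should be a fixed set-operation (independent of $m$) such that applying it once lowers the uniformity level by exactly one, and such that $u_n(\A) \uquiv{n} \A$ because at level $n$ the extra coordinates carry no information (a $\Pi^0_n$-cover can already detect them, just as a $\Pi^0_1$-cover detects the coordinates used by $\alpha$).

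Concretely, I would first define, for a single set $\A$, the set $\beta(\A)$ consisting of functions $h$ that code a pair $(f, x)$ where $f$ is padded in the style of $\alpha$ (infinitely many $0$-markers deleted, the rest decremented) to give an element of $\A$, with the catch that the "marker" positions are no longer decidable but merely $\Pi^0_1$-in-some-parameter, arranged so that at level $k$ the value of that parameter is only $\Pi^0_{k}$-recoverable. Then set $u_n(\A)$ to be the $n$-fold iterate. The three things to check, in order, are: (i) $u_n$ is monotone and well-defined on $\uquiv{n}$-classes, which reduces by induction to monotonicity of $\beta$ together with the observation from Remark~\ref{remark-sigma} that $\Sigma^0_{n+1}$-covers may be used; (ii) $u_n(\A) \uquiv{n} \A$, the analogue of the last sentence of Theorem~\ref{thm-m1-into-m}, proved by exhibiting a $\Pi^0_n$-cover of $u_n(\A)$ indexed by the hidden parameters so that each piece reduces uniformly to $\A$, and conversely sending $f \mapsto f+1$-style to push $\A$ into $u_n(\A)$; (iii) the induced map $u_{n,m}\colon \M_n \to \M_m$ is injective, which follows from (ii) together with the fact that $u_n(\A) \nured{m} u_n(\B)$ whenever $\A \nured{n} \B$ — this is where the level counting does the work, since an $m$-reduction from $u_n(\A)$ can only "see" a $\Pi^0_m$ amount of the encoded parameter and hence would induce an $n$-reduction $\A \ured{n} \B$ after we feed the reduction the extra $\Pi^0_{n}$-amount of information that the $\M_n$-side has available.

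For preservation of joins, the argument mirrors Theorem~\ref{thm-m1-into-m}: $u_n(\A) \oplus u_n(\B) \ured{m} u_n(\A \oplus \B)$ is immediate from monotonicity applied to $\A, \B \ured{n} \A \oplus \B$, and conversely one shards the two streams coordinate-by-coordinate exactly as in the $\alpha$-argument, interleaving the two padded functions and resetting a bookkeeping counter whenever a marker appears in either stream, so that $h$ lands in $u_n(\A \oplus \B)$ and is computed uniformly from the pair. Preservation of implication then follows formally and without any new computation, by the same trick used at the end of the proof of Theorem~\ref{thm-m1-into-m}: since $u_n$ is monotone we get $u_n(\A) \to_m u_n(\B) \ured{m} u_n(\A \to_n \B)$; conversely, using $u_n(\A) \uquiv{n} \A$ and $u_n(\B) \uquiv{n} \B$ from part (ii) we have $u_n(\A) \to_n u_n(\B) \uquiv{n} \A \to_n \B$, hence $u_n(u_n(\A)\to_n u_n(\B)) \equiv_m u_n(\A \to_n \B)$, and since $\C \ugeq{m} u_n(\C)$ always holds (again via $f \mapsto f+1$ on the outermost coordinates) we conclude $u_n(\A)\to_m u_n(\B) \ugeq{m} u_n(\A\to_n\B)$.

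The main obstacle I anticipate is part (iii), specifically getting the bookkeeping in the iterated encoding so that the level drops by \emph{exactly} one at each application of $\beta$ — not less (which would break injectivity) and not more (which would break $u_n(\A)\uquiv{n}\A$). This amounts to a careful hierarchy-theoretic lemma: one must verify that detecting the hidden parameter of $\beta(\A)$ requires genuinely $\Pi^0_1$-more quantifier complexity than detecting that of $\A$, uniformly, and that this stacks correctly under composition. Everything else — monotonicity, the join computation, the formal implication argument — is a routine adaptation of the $\M_1 \to \M$ case.
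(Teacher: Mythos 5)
Your strategy---build $u_n$ by iterating a single ``level-dropping'' map $\beta$ modelled on $\alpha$ from Theorem~\ref{thm-m1-into-m}---is genuinely different from the paper's. The paper constructs $u_n$ in one shot: each element of $u_n(\A)$ is a function carrying a tree that witnesses a full $\Pi^0_n$-quantifier alternation (the $(\rho,s)$-$\Pi$/$\Sigma$-validity conditions), together with coordinates from which an element of $\A$ can be extracted once the correct branch $\rho$ is known. This built-in, canonical $\Pi^0_n$-cover is precisely what replaces the nonexistent ``universal'' sequence of $\Pi^0_n$-classes covering $\A$, which the paper singles out as the obstacle. You also place the weight in the wrong direction: the non-reduction claim $\A \nured{n} \B \Rightarrow u_n(\A) \nured{m} u_n(\B)$ is automatic from $u_n(\A) \uquiv{n} \A$ and $\leq_m \subseteq \leq_n$; the substantive direction is $\A \ured{n} \B \Rightarrow u_n(\A) \leq_\M u_n(\B)$, which then yields all the $m$-reductions at once. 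Your join and implication arguments do match the paper's and would go through once the correct $u_n$ is in hand.

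The iteration itself has a gap, and it sits exactly where you anticipate trouble. For $u_n = \beta^{\circ n}$ to furnish the positive direction above, you would need the single-step level-dropping property $\A \ured{k+1} \B \Rightarrow \beta(\A) \ured{k} \beta(\B)$ at \emph{every} $k$. But any $\beta$ ``essentially like $\alpha$'' satisfies $\beta(\C) \uquiv{1} \C$ for all $\C$: the marker/padding structure is already decodable by a $\Pi^0_1$-cover. Hence for $k \geq 1$ one has $\beta(\A) \ured{k} \beta(\B) \Leftrightarrow \A \ured{k} \B$, and level-dropping at $k=1$ would assert $\A \ured{2} \B \Rightarrow \A \ured{1} \B$, which is false by Corollary~\ref{cor-proper}. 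The same collapse gives $\beta^{\circ j}(\A) \uquiv{1} \A$ for every $j$: composing $\Pi^0_1$-style marker conditions yields a finite conjunction of $\Pi^0_1$-conditions, which is still $\Pi^0_1$, never $\Pi^0_n$---arithmetical classes are closed under finite intersection. Iteration therefore does not accumulate quantifier depth, and the induced map $u_{n,m}$ would not even be well-defined for $m < n$: take $\A \uquiv{n} \B$ with $\A \nuquiv{m} \B$, then $u_n(\A) \uquiv{1} \A \nuquiv{m} \B \uquiv{1} u_n(\B)$. There is no bookkeeping lemma that salvages this; what is required is a map that encodes the entire $n$-level alternation into its elements from the start, as the paper's validity-tree construction does.
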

\begin{proof}
If $n = \omega$, we can use Sorbi's embedding of the Muchnik into the Medvedev degrees mentioned above. Also, if $n = 0$ there is nothing to be proven, and if $n = 1$ this follows from Theorem \ref{thm-m1-into-m}. So, we let $n \in \omega$ with $n \geq 2$.

In the case of the Muchnik lattice, we have a natural representative of the Muchnik degree of $\A$ in the Medvedev lattice: 
take the Medvedev degree of the upwards closure of $\A$, which is the same as the Medvedev degree of $\bigcup \{\B \subseteq \omega^\omega \mid \B \geq_w \A\}$. In other words, the Muchnik degree of $\A$ contains a maximal mass problem. This is what is used in the embedding of Sorbi for the Muchnik lattice. In our current case it is harder to find a natural representative; we cannot just take $\bigcup \{\B \subseteq \omega^\omega \mid \B \geq_n \A\}$ because $\A$ does not $n$-reduce to this set in general. This is caused by the fact that in general there is not a `universal' sequence of uniform $\Pi^0_n$-classes covering $\A$.

Thus, we need to find a different approach which works around this non-existence of a universal sequence. For this, note that, given any set $\C \subseteq \mathcal{P}\left(2^{\omega^{n}}\right)$ there is a natural $\Pi^0_n$-class $\V$ which covers part of $\C$: take those $X$ such that
\[\forall m_1 \exists m_2 \dots \forall m_{n-1} \exists m_n ((m_1,\dots,m_n) \in X).\]
We will show that this is in a certain sense a universal way of making $\Pi^0_n$-choices. Furthermore, if we do not take $\omega^n$ but $\omega^{n+1}$, by a slight modification we can make a natural $\Pi^0_n$-class which uses this extra space to code functions, and if we go up to $\omega^{n+2}$ we can use this extra space to be able to deal with multiple $\Pi^0_n$-classes at once. We will use this to define our desired representative.

\bigskip
We now give the full details. Consider $f \in (\omega^{n+1} \times \omega)^\omega$ (where we implicitly identify $(\omega^{n+1} \times \omega)^\omega$ with $\omega^\omega$ in some computable way). In what follows, when we write $x \in f$ we means that there is some $i \in \omega$ with $f(i) = x$. We now inductively define when \emph{$f$ is $(\rho,s)\textrm{-}\Sigma$-valid} and when \emph{$f$ is $(\rho,s)\textrm{-}\Pi$-valid}, where $1 \leq s \leq n$ and $\rho \in \omega^{n+1-s}$:
\begin{itemize}
\item $f$ is $(\rho,1)\textrm{-}\Pi$-valid if and only if there is a $k \in \omega$ such that for all $m \in \omega$, $(\rho \conc m,k) \in f$.
\item $f$ is $(\rho,1)\textrm{-}\Sigma$-valid if and only if there are a $k \in \omega$ and an $m \in \omega$ such that $(\rho \conc m,k) \in f$.
\item $f$ is $(\rho,s+1)\textrm{-}\Pi$-valid if and only if it is $(\rho \conc m,s)\textrm{-}\Sigma$-valid for every $m \in \omega$.
\item $f$ is $(\rho,s+1)\textrm{-}\Sigma$-valid if and only if it is $(\rho \conc m,s)\textrm{-}\Pi$-valid for some $m \in \omega$.
\end{itemize}

First, let uw consider how valid $f$ code functions in a computable way. If $f$ is $(\rho,3)\textrm{-}\Pi$-valid, we know that for each $m \in \omega$ there is an $a_m$ such that $f$ is $(\rho \conc m \conc a_m,1)\textrm{-}\Pi$-valid, and therefore there is a $k_m$ such that $(\rho \conc m \conc a_m \conc t,k_m) \in f$ for every $t \in \omega$. Given $m$, let $k_m$ be least (in the order $f(0),f(1),\dots$) such that $(\rho \conc m \conc a_m \conc t,k_m) \in f$ for some $a_m,t \in \omega$. In this case we let $p_\textrm{odd}(\rho,f) = k_0 k_1 \dots$. This is computable uniformly in $\rho$ and $f$.

On the other hand, if $f$ is $(\rho,2)\textrm{-}\Pi$-valid, we know that for each $m \in \omega$ there are $k_m$ and $a_m$ such that $(\rho \conc m \conc a_m, k_m) \in f$. Again let $k_m$ be the least such $k_m$ and define $p_\textrm{even}(\rho,f) = k_0 k_1 \dots$, then this is computable uniformly in $\rho$ and $f$.

We now let
\begin{align*}
u_n(\A) = \{f \mid &\exists i. f \text{ is } (i,n)\textrm{-}\Pi\text{-valid, and }\\
&\forall \rho.f \text{ is } (\rho,3)\textrm{-}\Pi\text{-valid and } f \text{ is } (\rho \restriction 1,n)\textrm{-}\Pi\text{-valid} \Rightarrow p_\textrm{odd}(\rho,f) \in \A\}
\end{align*}
if $n$ is odd, and
\begin{align*}
u_n(\A) = \{f \mid &\exists i. f \text{ is } (i,n)\textrm{-}\Pi\text{-valid, and }\\
&\forall \rho.f \text{ is } (\rho,2)\textrm{-}\Pi\text{-valid and } f \text{ is } (\rho \restriction 1,n)\textrm{-}\Pi\text{-valid} \Rightarrow p_\textrm{even}(\rho,f) \in \A\}
\end{align*}
if $n$ is even.
For the remainder of the proof let us assume that $n$ is odd; the even case proceeds in the same way.

\bigskip

First, we claim: $u_n(\A) \uquiv{n} \A$. Indeed, to show that $u_n(\A) \leq_n \A$, or in fact even $u_n(\A) \leq_\M \A$, send $f$ to the function $g$ given by
\[g(\langle i,a_1,\dots,a_n\rangle) = (ia_1 \dots a_n,f(a_{n-2})).\]
It is easy to verify that this gives an element of $u_n(\A)$.

Conversely, consider the classes $\V_{\rho}$
consisting of those $f$ such that $f$ is $(\rho \restriction 1,n)\textrm{-}\Pi$-valid and such that f is $(\rho,3)\textrm{-}\Pi$-valid. The first is a $\Pi^0_n$-condition, and the latter is $\Pi^0_3$. Thus this is a $\Pi^0_n$-class uniformly in $\rho$ since $n \geq 2$ and $n$ is odd, hence $n \geq 3$. Also note that it covers $u_n(\A)$ because if $f$ is $(i,n)\textrm{-}\Pi$-valid, it is $(\rho,3)\textrm{-}\Pi$-valid for some string $\rho$ by definition.
Furthermore, for each such $f$ we can uniformly compute an element of $\A$ given a $\rho$ such that $f \in \V_\rho$, by computing $p_\textrm{odd}(\rho,f)$. This shows that $\A \leq_n u_n(\A)$ and hence $u_n(\A) \equiv_n \A$.

In particular, if $m \leq n$ we see that $u_n(\A) \leq_m u_n(\B)$ implies $\A \ured{n} \B$.
Next, assume $\A \ured{n} \B$. Then also $\A \ured{n} u_n(\B)$, as shown above. We will show that $u_n(\A) \leq_\M u_n(\B)$. Fix a computable sequence $\sigma_{i,a_1,\dots,a_n}$ and a computable sequence $e_0,e_1,\dots$ such that $\Phi_{e_i}(\V_i \cap u_n(\B)) \subseteq \A$, where
\[\V_i = \bigcap_{a_1 \in \omega} \bigcup_{a_2 \in \omega} \dots \bigcap_{a_n \in \omega} \llbracket \sigma_{i,a_1,\dots,a_n}\rrbracket.\]
Given any $f \in u_n(\B)$, we show how to uniformly compute an element $\Psi(f)$ of $u_n(\A)$. To define $\Phi(f)(m)$, wait until a stage $s$ such that $\sigma_{i,a_1,\dots,a_n} \subseteq f$ for some $i,a_1,\dots,a_n \leq s$ for which no element of $\Phi(f) \restriction m$ begins with $i,a_1,\dots,a_n$ and such that $\Phi_{e_i}(f)(a_{n-2})[s]{\downarrow}$. If so, let $(i,a_1,\dots,a_n)$ be the least sequence (in some fixed computable well-ordering of $\omega^{n+1}$) for which this holds, and let $f(m) = (i a_1 \dots a_n,\Phi_{e_i}(f)(a_{n-2}))$. Then $\Psi(f)$ is total because $f$ is in $u_n(\B) \cap \V_i$ for some $i \in \omega$. We can also directly verify that $\Psi(f)$ is $(i,n)\textrm{-}\Pi$-valid for this $i$.

Finally, note that for every $m \in \omega$, letting $\Psi(f)(m) = (ia_1\dots a_n,b)$, we have that $b = \Phi_{e_i}(f)(a_{n-2})$ by construction. Furthermore, $\Psi(f)$ is $(i,n)\textrm{-}\Pi$-valid if and only if
\[\forall a_1 \exists a_2 \dots \forall a_n (\sigma_{i,a_1,\dots,a_n} \subseteq f),\]
if and only if $f \in \V_i$.
From this we can directly verify that, if $\Psi(f)$ is $(\rho,3)\textrm{-}\Pi$-valid and $(\rho \restriction 1,n)\textrm{-}\Pi$-valid, we have that $p_\textrm{odd}(\rho,\Psi(f)) = \Phi_{e_{\rho(0)}}(f) \in \A$. Thus, $\Psi(f) \in u_n(\A)$ and therefore $\A \ured{n} \B$ if and only if $u_n(\A) \leq_\M u_n(\B)$. This also implies that the induced map is well-defined for every $m$.

\bigskip

We have already seen that $u_n(\A) \oplus u_n(\B) \leq_\M u_n(\A \oplus \B)$. Conversely, given $f \in u_n(\A)$ and $g \in u_n(\B)$, we show how to construct a function $\Psi(f \oplus g) \in u_n(\A \oplus \B)$. What we basically need to use is that the conjunction of two $\Sigma^0_{n+1}$-formulas is again a $\Sigma^0_{n+1}$-formula. However, we need to make sure we preserve the coding, which takes a little more work.

To define $\Psi(f \oplus g)(\langle 2i,j,a_2,a_4,\dots,a_{n-4}\rangle)$, we let $f(i) = (s b_1 \dots b_n,x)$. Now we define
\begin{align*}\Psi(f \oplus g)&(\langle 2i,j,a_2,a_4,\dots,a_{n-4}\rangle)\\
=\; &(\langle s,j\rangle \conc b_1 \conc \langle b_2,a_2\rangle \conc b_3 \cdots \conc b_{n-3} \conc (2 \cdot b_{n-2}) \conc b_{n-1} \conc b_n,x).
\end{align*}
To define $\Psi(f \oplus g)(\langle 2i+1,a_2,a_4,\dots,a_{n-4}\rangle)$, we let $g(i) = (s b_1 \dots b_n,x)$. Now we define
\begin{align*}
\Psi(f \oplus g)&(\langle 2i+1,j,a_2,a_4,\dots,a_{n-4}\rangle)\\
=\; &(\langle j,s\rangle \conc b_1 \conc \langle a_2,b_2\rangle \conc b_3 \cdots \conc b_{n-3} \conc (2 \cdot b_{n-2}+1) \conc b_{n-1} \conc b_n,x).
\end{align*}

Then, if $\rho = (\langle s,t \rangle \conc a_1 \conc \langle a_2,b_2\rangle \conc \cdots \conc a_{n-3})$, it can be directly verified that $\Psi(f \oplus g)$ is $(\rho,3)\textrm{-}\Pi$-valid if and only if $f$ is $(s a_1 a_2 \dots a_{n-3},3)\textrm{-}\Pi$-valid and $g$ is $(t a_1 b_2 \dots a_{n-3},3)\textrm{-}\Pi$-valid.
Furthermore, in this case we have that
\[p_\textrm{odd}\left(\rho,\Psi(f \oplus g)\right) = p_\textrm{odd}(\rho,f) \oplus p_\textrm{odd}(\rho,g).\]
Now, a straightforward calculation now shows that $\Psi(f \oplus g)$ is $(\langle s,t \rangle,n)\textrm{-}\Pi$-valid if and only if $f$ is $(s,n)\textrm{-}\Pi$-valid and $g$ is $(t,n)\textrm{-}\Pi$-valid. Combining all of this we see that $\Psi(f \oplus g) \in u_n(\A \oplus \B)$, which is what we needed to show.

\bigskip

Finally, that the implication is preserved can be proven in the same way as in the proof of Theorem \ref{thm-m1-into-m}, using that $\C \geq_\M u_n(\C)$ for every $\C$ as shown above.
\end{proof}

\section{Uniformity}\label{sec-uniformity}
Using the $n$-degrees, we can now introduce the measure of uniformity mentioned in the introduction.

\begin{defi}
Let $\A \leq_w \B$. Then we say that the \emph{uniformity of $\A$ to $\B$} is the least $n \in \omega+1$ such that $\A \ured{n} \B$.
\end{defi}

As we will see later, there are cases when the uniformity is not a natural number. However, if $\A$ is a reasonable class, in the sense that it is arithmetical, then it turns out that the uniformity is in fact a natural number. This follows from the following result, which is due to Higuchi and Kihara \cite{higuchi-kihara-2014}, but the proof given here is our own.

\begin{prop}{\rm (\cite[Proposition 27]{higuchi-kihara-2014})}\label{prop-max-uni}
Let $\A \leq_w \B$ be such that $\A$ is $\Sigma^0_{n+1}$. Then the uniformity of $\A$ to $\B$ is at most $\max(n,2)$.
\end{prop}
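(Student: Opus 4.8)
The plan is to show that whenever $\A \leq_w \B$ and $\A$ is $\Sigma^0_{n+1}$, we can manufacture a uniformly $\Pi^0_{\max(n,2)}$ cover of $\B$ on which the reduction becomes uniform. The natural cover to use is one indexed by \emph{Turing functionals together with witnesses of totality-and-correctness}: for $g \in \B$ there is, by Muchnik reducibility, some index $e$ with $\Phi_e(g) \in \A$; we want the piece of the cover containing $g$ to record such an $e$, so that on that piece we may uniformly apply $\Phi_e$ and land in $\A$. The subtlety is that "$\Phi_e(g)$ is total and lies in $\A$" must itself be expressible by a $\Pi^0_{\max(n,2)}$-condition on $g$ (uniformly in $e$), since that is exactly what a piece of the cover is allowed to be.

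First I would fix an effective listing of $\Sigma^0_{n+1}$-classes and write $\A = \Ss^{n+1}_a$ for a fixed index $a$. For each $e \in \omega$ define $\V_e$ to be the class of those $g$ such that $\Phi_e(g)$ is total and $\Phi_e(g) \in \A$. Being total is a $\Pi^0_2$-condition on $g$ uniformly in $e$ (namely $\forall m\,\exists s\,\Phi_e(g\restriction s)(m){\downarrow}$); and "the total function $\Phi_e(g)$ lies in $\Ss^{n+1}_a$" is, once totality is known, a $\Sigma^0_{n+1}$-condition, hence the conjunction can be arranged to be $\Sigma^0_{n+1}$ within the $\Pi^0_2$-set of $g$ on which $\Phi_e(g)$ is total. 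Using Remark \ref{remark-sigma}, which lets us replace $\Pi^0_k$ by $\Sigma^0_{k+1}$ (equivalently, a $\Sigma^0_{n+1}$-within-$\Pi^0_2$ family may be refined into a uniformly $\Pi^0_{\max(n,2)}$ family: split the inner $\Sigma^0_{n+1}$-class as a union of $\Pi^0_n$-classes and intersect each with the ambient $\Pi^0_2$-class — an intersection of a $\Pi^0_n$ and a $\Pi^0_2$ class is $\Pi^0_{\max(n,2)}$, uniformly), I get a uniformly $\Pi^0_{\max(n,2)}$ sequence $\V'_0, \V'_1, \dots$ whose union is $\bigcup_e \V_e$. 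On each $\V'_j$, which refines some $\V_e$, the Turing functional $\Phi_e$ (a computable function of $j$) sends every element of $\V'_j$ — and in particular every element of $\B \cap \V'_j$ — into $\A$. Since $\A \leq_w \B$ means every $g \in \B$ has some $e$ with $\Phi_e(g)\in\A$ and $\Phi_e(g)$ total, we have $\B \subseteq \bigcup_e \V_e = \bigcup_j \V'_j$. Hence the sequences $(\V'_j)_j$ and the associated indices witness $\A \ured{\max(n,2)} \B$.

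The main obstacle I expect is the bookkeeping in the previous paragraph: getting the complexity count exactly right and uniform. One must be careful that "$\Phi_e(g)\in\A$" is only $\Sigma^0_{n+1}$ \emph{after} conditioning on totality — an unconditional reading could be worse — so the argument genuinely needs the relativised/"within $\T$" formulation (cf. Lemma \ref{lem-inverse-pi}) rather than a naïve complexity estimate, and one must check that passing from the $\Sigma^0_{n+1}$-within-$\Pi^0_2$ description to an honest uniformly $\Pi^0_{\max(n,2)}$ sequence (via the $\Sigma/\Pi$ trick of Remark \ref{remark-sigma}) is genuinely uniform in $e$. The appearance of $\max(n,2)$ rather than $n$ is forced precisely here: the totality clause costs $\Pi^0_2$ no matter how simple $\A$ is, which is why the bound cannot drop below $2$.
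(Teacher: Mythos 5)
Your proposal is correct and follows essentially the same route as the paper: index the cover by Turing functionals, take $\V_e$ to be the set of $g$ for which $\Phi_e(g)$ is total (a $\Pi^0_2$-condition) and $\Phi_e(g)\in\A$ (a $\Sigma^0_{n+1}$-condition within the totality domain, via Lemma~\ref{lem-inverse-pi}), and then apply Remark~\ref{remark-sigma} to convert the resulting $\Sigma^0_{\max(n,2)+1}$ cover into a uniformly $\Pi^0_{\max(n,2)}$ one. You also correctly identify the source of the $\max(n,2)$ — the $\Pi^0_2$ cost of totality — and the uniformity bookkeeping, which matches the paper's proof.
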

\begin{proof}
Let $\Phi$ be a Turing functional.
Let $\T \subseteq \omega^\omega$ be the class of functions $f$ for which $\Phi(f)$ is total, which is a $\Pi^0_2$-class. From Lemma \ref{lem-inverse-pi} we then see that we have that $\Phi^{-1}(\A) \cap \T$ is $\Sigma^0_{\max(n,2)}$.

Now, let $\V_i$ be $\Phi_i^{-1}(\A) \cap \T$, and let $e_i = i$. Then the $\V_i$ cover $\B$ since we assumed that $\A \leq_w \B$, and $\Phi_e(\V_e) \subseteq \A$. The result now follows, as discussed in Remark \ref{remark-sigma}.
\end{proof}

\begin{rem}
Note that, in the proof above, the reduction does not depend on $\B$, but only on $\A$. Thus, given any $\Sigma^0_{n+1}$-class $\A$ there is a single reduction which witnesses that $\A \leq_{\max(n,2)} \B$ for every $\B$ with $\B \geq_w \A$.
\end{rem}

\begin{cor}
If $\A$ is arithmetical, then the uniformity of $\A$ to $\B$ is a natural number.
\end{cor}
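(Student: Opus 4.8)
The plan is to read this off directly from Proposition \ref{prop-max-uni}. First note that, since the phrase ``the uniformity of $\A$ to $\B$'' presupposes $\A \leq_w \B$, we are implicitly in the situation where the uniformity is defined, namely as the least $k \in \omega + 1$ with $\A \ured{k} \B$. So the only thing to check is that this least $k$ is actually finite.

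For this, unpack ``arithmetical'': by definition $\A$ lies at some finite level of the arithmetical hierarchy, so there is an $n \in \omega$ with $\A \in \Sigma^0_{n+1}$ (one may always pad the level upwards, so such an $n$ exists). Now apply Proposition \ref{prop-max-uni}: it yields $\A \ured{\max(n,2)} \B$. Since $\max(n,2) \in \omega$, the set of $k \in \omega+1$ with $\A \ured{k} \B$ contains a natural number, hence its least element is a natural number. This is exactly the assertion.

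I do not expect any genuine obstacle here; the content of the corollary is entirely carried by Proposition \ref{prop-max-uni}, and the only step is the routine observation that every arithmetical class is $\Sigma^0_{n+1}$ for some $n$. If one wishes to say slightly more, one can record the quantitative form: whenever $\A \in \Sigma^0_{n+1}$, the uniformity of $\A$ to $\B$ is at most $\max(n,2)$, so in particular the uniformity never exceeds the arithmetical complexity of $\A$ (and is at most $2$ for $\A$ of complexity $\Sigma^0_1$, $\Sigma^0_2$ or $\Sigma^0_3$).
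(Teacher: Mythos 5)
Your proof is correct and is exactly the intended argument: the paper leaves this corollary unproved precisely because it follows immediately from Proposition \ref{prop-max-uni} by the observation that every arithmetical class is $\Sigma^0_{n+1}$ for some $n \in \omega$. Nothing is missing.
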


In Theorem \ref{thm-dnc} below we will see that Proposition \ref{prop-max-uni} is optimal for $n \geq 2$. For $n=2$ this follows from the following elegant pair of theorems. 

\begin{thm}{\rm (Jockusch \cite{jockusch-1989})}
We have that $\mathrm{DNR}_2 \equiv_w \mathrm{DNR}_3$, but $\mathrm{DNR}_2 \not\leq_\M \mathrm{DNR}_3$.
\end{thm}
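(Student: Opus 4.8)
This statement has two halves, which I would handle separately. The Muchnik equivalence is the easier one. Since a $2$-valued DNR function is in particular a $3$-valued one, $\mathrm{DNR}_2 \subseteq \mathrm{DNR}_3$, so $\mathrm{DNR}_3 \leq_\M \mathrm{DNR}_2$ via the identity map, and a fortiori $\mathrm{DNR}_3 \leq_w \mathrm{DNR}_2$. For $\mathrm{DNR}_2 \leq_w \mathrm{DNR}_3$ I would invoke the classical fact that a Turing degree computes a DNR function with some fixed finite bound if and only if it is a PA degree, together with the fact that every PA degree computes a $2$-valued DNR function; hence every $f \in \mathrm{DNR}_3$ computes some $g \in \mathrm{DNR}_2$. (The one nontrivial point, that $\mathrm{DNR}_3$-functions have PA degree, has a short proof via the recursion theorem.)

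For $\mathrm{DNR}_2 \not\leq_\M \mathrm{DNR}_3$ I would argue by contradiction: suppose $\Phi$ is a Turing functional with $\Phi(f) \in \mathrm{DNR}_2$ for every $f \in \mathrm{DNR}_3$. The first step is to exploit compactness. Writing $\mathrm{DNR}_3 = [T]$ for a co-c.e.\ tree $T \subseteq 3^{<\omega}$ each of whose nodes has at least two immediate successors in $T$, we have that $\mathrm{DNR}_3$ is a nonempty $\Pi^0_1$ class on which $\Phi$ is total, and König's lemma then yields a \emph{computable} function $n \mapsto \ell_n$ with $\ell_n > n$ such that, for every $f \in \mathrm{DNR}_3$, the value $\Phi(f)(n)$ is already determined by the node $f \restriction \ell_n \in T$, always lies in $\{0,1\}$, and satisfies $\Phi(f)(n) \neq \Phi_n(n)$ whenever $\Phi_n(n){\downarrow}$. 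The plan is then to use the recursion theorem to control diagonalisation witnesses: for an index $p$ delivered by the recursion theorem one builds a machine that monitors how $\Phi$ acts on the finitely many length-$\ell_p$ nodes of $T$ whose $p$-th coordinate equals $2$, and commits $\Phi_p(p)$ to a value of $\Phi$ occurring on such a node. Since a genuine such node $\sigma$ extends to some $f \in \mathrm{DNR}_3$ with $\Phi(f)(p) = \Phi(\sigma)(p)$, if $\Phi_p(p)$ ends up equal to that value then $\Phi(f)(p) = \Phi_p(p)$, contradicting $\Phi(f) \in \mathrm{DNR}_2$; and if $\Phi$ ever outputs a value ${\geq}\,2$ on such a node, that by itself contradicts the reduction property. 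The leverage is precisely that a $\mathrm{DNR}_3$-condition forbids at most one of three values whereas a $\mathrm{DNR}_2$-value ranges over only two.

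The hard part is that $\mathrm{DNR}_3$ has no computable members, so the construction cannot simply write down a node of $T$ of length $\ell_p$ to feed to $\Phi$: it can only certify that a candidate string lies in $T$ at a finite stage, after which the string may drop out of $T$, and a single controlled coordinate is then not enough (the machine could commit $\Phi_p(p)$ to a value that does not in fact occur on any genuine node, and $\Phi$ escapes by leaving $\Phi_p(p)$ undefined). Making the diagonalisation go through therefore requires a more intricate recursion-theoretic construction — controlling a family of coordinates at once, pinning down enough auxiliary values $\Phi_j(j)$ to keep the relevant candidate nodes genuinely DNR-consistent, and a stabilisation argument on the shrinking finite approximations to $T \cap 3^{\ell_p}$ — arranged so that a contradiction is forced no matter how $\Phi$ behaves on the controlled coordinates. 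This coordination is where I expect essentially all the difficulty of the argument to lie.
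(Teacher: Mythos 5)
The paper does not prove this theorem; it is cited from Jockusch~\cite{jockusch-1989} with no argument given, so there is no internal proof to compare your proposal against. Your treatment of the Muchnik equivalence is correct and standard: $\mathrm{DNR}_3 \leq_\M \mathrm{DNR}_2$ by inclusion, and $\mathrm{DNR}_2 \leq_w \mathrm{DNR}_3$ because every $\mathrm{DNR}_3$ function has PA degree and hence computes a two-valued DNR function.

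For $\mathrm{DNR}_2 \not\leq_\M \mathrm{DNR}_3$ you have assembled the right framework but, as you yourself acknowledge, not a proof. The compactness step is sound: $\mathrm{DNR}_3$ is a compact $\Pi^0_1$ class on which $\Phi$ is total, so one can computably find a modulus $\ell_n$ with $\Phi(\sigma)(n){\downarrow} \in \{0,1\}$ for every $\sigma$ of length $\ell_n$ in the co-c.e.\ tree $T$ of $\mathrm{DNR}_3$-consistent strings, and a recursion-theorem diagonalisation exploiting the three-versus-two counting is indeed the engine of Jockusch's argument. But the difficulty you flag is the heart of the matter, and it remains open in your write-up: a candidate $\sigma \in T_s \cap 3^{\ell_p}$ with $\sigma(p)=2$ on which the machine commits $\varphi_p(p)$ may later leave $T$, making the diagonalisation vacuous. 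Your proposed remedy, ``pinning down enough auxiliary values $\varphi_j(j)$ to keep the candidate nodes DNR-consistent,'' is not available as stated. The recursion theorem hands you control over $\varphi_p$ for one index $p$ (or, with parameters, over a computable family of indices fixed in advance), but the indices $j < \ell_p$ that govern whether $\sigma$ survives in $T$ are determined only once $p$ and $\ell_p$ are already known, and they are not yours to set; treating them as controllable is circular. So the second half is an honest strategy sketch rather than a proof, and the essential combinatorial content that makes Jockusch's argument go through is still missing.
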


\begin{thm}{\rm (Higuchi and Kihara \cite[Corollary 72]{higuchi-kihara-2014-2})}
\[\mathrm{DNR}_2 \not\leq_1 \mathrm{DNR}_3.\]
\end{thm}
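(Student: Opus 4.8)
The plan is to reduce the statement to Jockusch's theorem $\mathrm{DNR}_2 \nured{\M} \mathrm{DNR}_3$ quoted above, using the fact that $\mathrm{DNR}_3$ sits inside the compact space $3^\omega$, so that the $\Pi^0_1$-advice allowed in a $1$-reduction cannot genuinely help; in particular this sidesteps any direct forcing argument in which that advice would interact awkwardly with the DNR-constraints. So suppose towards a contradiction that $\mathrm{DNR}_2 \ured{1} \mathrm{DNR}_3$, witnessed by a uniformly $\Pi^0_1$ sequence $\V_0,\V_1,\dots$ covering $\mathrm{DNR}_3$ together with a computable sequence $e_0,e_1,\dots$ such that $\Phi_{e_i}(\mathrm{DNR}_3 \cap \V_i) \subseteq \mathrm{DNR}_2$ for every $i$.

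The first step is to find a single $\V_i$ that contains a whole set of the form $\mathrm{DNR}_3 \cap \llbracket\tau\rrbracket$. Note that $\mathrm{DNR}_3$ is a nonempty closed subset of $3^\omega$, hence compact, hence a Baire space, and that each $\V_i \cap \mathrm{DNR}_3$ is closed in $\mathrm{DNR}_3$ (since each $\V_i$ is $\Pi^0_1$, hence closed, in $\omega^\omega$). As $\mathrm{DNR}_3 = \bigcup_i (\V_i \cap \mathrm{DNR}_3)$, the Baire Category Theorem provides an $i$ and a string $\tau$ with $\emptyset \neq \mathrm{DNR}_3 \cap \llbracket\tau\rrbracket \subseteq \V_i$. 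In particular $\tau$ is extendible in the tree of $\mathrm{DNR}_3$ (so $\tau(n) \neq \Phi_n(n)$ whenever $n < |\tau|$ and $\Phi_n(n){\downarrow}$), and $\Phi_{e_i}$ sends every element of $\mathrm{DNR}_3 \cap \llbracket\tau\rrbracket$ into $\mathrm{DNR}_2 \subseteq \omega^\omega$; thus $\Phi_{e_i}$ is total on $\mathrm{DNR}_3 \cap \llbracket\tau\rrbracket$ and witnesses $\mathrm{DNR}_2 \ured{\M} \mathrm{DNR}_3 \cap \llbracket\tau\rrbracket$. (This is non-effective, which is harmless since we only want a contradiction.)

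The second step is to observe that passing to such a clopen piece does not change the Medvedev degree of $\mathrm{DNR}_3$: concretely, $\mathrm{DNR}_3 \cap \llbracket\tau\rrbracket \ured{\M} \mathrm{DNR}_3$. For this, use the $s$-$m$-$n$ theorem to fix a computable injection $q$ with range contained in $[|\tau|,\infty)$ and with $\Phi_{q(n)}(q(n)) \simeq \Phi_n(n)$ for all $n \geq |\tau|$; then from any $f \in \mathrm{DNR}_3$ one uniformly computes the function that agrees with $\tau$ below $|\tau|$ and takes the value $f(q(n))$ at each $n \geq |\tau|$, and this function lies in $\mathrm{DNR}_3 \cap \llbracket\tau\rrbracket$ (using that $\tau$ is $\mathrm{DNR}_3$-extendible for the positions below $|\tau|$, and that $f \in \mathrm{DNR}_3$ for the rest). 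Composing the two steps gives $\mathrm{DNR}_2 \ured{\M} \mathrm{DNR}_3$, contradicting Jockusch's theorem.

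I expect the only genuine subtlety to lie in the first step: one has to recognise that, for this purpose, ``uniformly $\Pi^0_1$'' may be weakened all the way to ``closed'', and that it is precisely the compactness (equivalently, here, the Baireness) of $\mathrm{DNR}_3$ as a subspace of $3^\omega$ — rather than the behaviour of an arbitrary mass problem in $\omega^\omega$ — that makes the countable $\Pi^0_1$-cover collapse to a single reduction localised to a basic clopen set. Everything after that is routine: the second step is just the usual self-similarity of the $\mathrm{DNR}_k$ mass problems, and the conclusion is immediate from the quoted Medvedev separation.
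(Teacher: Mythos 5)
The paper does not prove this theorem; it is quoted from Higuchi and Kihara \cite[Corollary 72]{higuchi-kihara-2014-2}, so there is no in-paper proof to compare against. Your argument is correct and self-contained. The Baire-category step is sound: $\mathrm{DNR}_3$ is a nonempty $\Pi^0_1$ subclass of $3^\omega$, hence a nonempty compact (in particular Baire) space, and the countable closed cover by the $\V_i \cap \mathrm{DNR}_3$ forces some $\V_i$ to contain a nonempty $\mathrm{DNR}_3 \cap \llbracket\tau\rrbracket$, on which $\Phi_{e_i}$ witnesses $\mathrm{DNR}_2 \leq_\M \mathrm{DNR}_3 \cap \llbracket\tau\rrbracket$. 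The self-similarity step $\mathrm{DNR}_3 \cap \llbracket\tau\rrbracket \leq_\M \mathrm{DNR}_3$ via the recursion-theorem re-indexing is standard and correctly executed (insisting that $q$ have range inside $[|\tau|,\infty)$ is superfluous but harmless), and composing with Jockusch's Medvedev separation finishes the proof. One small imprecision in your closing remark: what makes the localisation work is Baireness, and any nonempty $\Pi^0_1$ class in $\omega^\omega$ is already Baire as a nonempty Polish space, so compactness is sufficient rather than essential; the relevant distinction is between closed mass problems and arbitrary ones, not between $3^\omega$ and $\omega^\omega$. Your Baire-category/self-similarity pattern is the natural measure-zero analogue of the measure-theoretic arguments (effective $0$-$1$ law, countable additivity, positive-measure $\Pi^0_n$-classes) that the paper deploys for the randomness classes in the surrounding section.
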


\begin{cor}
The uniformity of $\mathrm{DNR}_2$ to $\mathrm{DNR}_3$ is 2.
\end{cor}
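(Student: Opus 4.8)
The plan is to combine the upper bound coming from Proposition~\ref{prop-max-uni} with the two non-reducibility facts quoted just above. For the upper bound, I would first note that $\mathrm{DNR}_2$ is a $\Pi^0_1$-class, and hence in particular a $\Sigma^0_2$-class. Since $\mathrm{DNR}_2 \leq_w \mathrm{DNR}_3$ (this is part of Jockusch's theorem), Proposition~\ref{prop-max-uni} applied with $n = 1$ yields that the uniformity of $\mathrm{DNR}_2$ to $\mathrm{DNR}_3$ is at most $\max(1,2) = 2$.

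For the matching lower bound I would rule out uniformity $0$ and uniformity $1$ separately. The value $1$ is excluded immediately by the cited result of Higuchi and Kihara, namely $\mathrm{DNR}_2 \not\leq_1 \mathrm{DNR}_3$. The value $0$ is excluded because, as shown earlier in this section, $0$-reducibility coincides with Medvedev reducibility, and Jockusch's theorem gives $\mathrm{DNR}_2 \not\leq_\M \mathrm{DNR}_3$, so $\mathrm{DNR}_2 \nured{0} \mathrm{DNR}_3$. Since the uniformity is by definition the least $n \in \omega+1$ with $\mathrm{DNR}_2 \ured{n} \mathrm{DNR}_3$, and we have shown it is neither $0$ nor $1$ yet is at most $2$, it must be exactly $2$.

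I do not expect any genuine obstacle: every ingredient is already in hand, and the proof is just a bookkeeping assembly of the three quoted statements together with the identification $\ured{0} = \leq_\M$. The only point that deserves a moment of care is the remark that $\mathrm{DNR}_2$ is $\Sigma^0_2$ (being $\Pi^0_1$), so that the hypothesis of Proposition~\ref{prop-max-uni} is satisfied with $n = 1$; feeding in a larger $n$ would still give a finite uniformity but not the sharp bound $2$ needed to pin down the value.
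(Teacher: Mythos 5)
Your proof is correct and matches the paper's own argument exactly: the upper bound comes from Proposition~\ref{prop-max-uni} applied to the $\Pi^0_1$ (hence $\Sigma^0_2$) class $\mathrm{DNR}_2$, and the lower bound rules out $0$ via Jockusch's theorem (using $\ured{0} = \leq_\M$) and $1$ via the Higuchi--Kihara result. You have simply spelled out the bookkeeping that the paper leaves implicit.
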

\begin{proof}
From the previous two theorems and the fact that $\mathrm{DNR}_2$ is a $\Pi^0_1$-class.
\end{proof}

\section{Uniformity and algorithmic randomness}\label{sec-randomness}

To illustrate the definition given in the previous section, we will now study the uniformity of some well-known Muchnik reductions from algorithmic randomness. First, we will study a version of the effective 0-1-law, which is originally due to Ku\v{c}era \cite{kucera-1985}. This will be a helpful tool during the remainder of this paper.

\begin{thm}{\rm (Effective 0-1-law, Ku\v{c}era \cite{kucera-1985}, Kautz \cite{kautz-1991})}\label{eff-0-1}
Let $n \in \omega$, let $\V$ be a $\Pi^0_n$-class of positive measure and let $X$ be $n$-random. Then there is a $k \in \omega$ with $X \restriction [k,\infty) \in \V$.
\end{thm}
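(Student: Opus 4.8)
The plan is to reduce to the case $n=1$ by relativisation and then run Ku\v{c}era's block-decomposition argument. First I would use two standard facts (see e.g.\ \cite{downey-hirschfeldt-2010}): a $\Pi^0_n$-class is $\Pi^0_1$ relative to $\emptyset^{(n-1)}$ (so its complement is $\Sigma^0_1(\emptyset^{(n-1)})$), and a real is $n$-random exactly when it is Martin-L\"of random relative to $\emptyset^{(n-1)}$. Every object constructed in the argument below for $n=1$ is obtained uniformly from a $\Sigma^0_1$-index for the open complement of $\V$, so relativising that oracle to $\emptyset^{(n-1)}$ promotes the $n=1$ proof to a proof for all $n\ge 1$. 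So from now on I assume $n=1$: $\V\subseteq 2^\omega$ is $\Pi^0_1$ with $\mu(\V)>0$ and $X$ is Martin-L\"of random.

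Set $\mathcal U=2^\omega\setminus\V$, an effectively open class, and fix once and for all a rational $q$ with $0<q<\mu(\V)$, so that $\mu(\mathcal U)\le 1-q<1$. Choose a c.e., prefix-free set $S$ of strings with $\bigcup_{\sigma\in S}\llbracket\sigma\rrbracket=\mathcal U$ (obtained by trimming a computable enumeration of the basic cylinders of $\mathcal U$ to a prefix-free one); then $\sum_{\sigma\in S}2^{-|\sigma|}=\mu(\mathcal U)$. For $k\ge 1$ let $W_k$ be the class of reals having a prefix of the form $\sigma_1\conc\cdots\conc\sigma_k$ with every $\sigma_j\in S$; since $S$ is c.e.\ and prefix-free, $(W_k)_{k\ge 1}$ is uniformly $\Sigma^0_1$, it is $\subseteq$-decreasing, and splitting off the unique $\sigma_1\in S$ that is a prefix of a given real gives $\mu(W_k)=\mu(\mathcal U)^k\le(1-q)^k$. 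Now pick computable $k_1<k_2<\cdots$ with $(1-q)^{k_j}\le 2^{-j}$ (possible since $q$ is a fixed rational) and put $V_j=W_{k_j}$; then $(V_j)_j$ is a genuine Martin-L\"of test. The key point is that any real $Z$ with no tail in $\V$ lies in $\bigcap_k W_k=\bigcap_j V_j$: since $Z\notin\V$ we have $Z\in\mathcal U$, so $Z$ extends some $\sigma_1\in S$; then $Z\restriction[|\sigma_1|,\infty)$ is again a tail of $Z$, hence not in $\V$, hence extends some $\sigma_2\in S$, so $\sigma_1\conc\sigma_2$ is a prefix of $Z$; iterating produces prefixes $\sigma_1\conc\cdots\conc\sigma_k$ of $Z$ for every $k$. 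Since $X$ is Martin-L\"of random it avoids $\bigcap_j V_j$, so some tail $X\restriction[k,\infty)$ lies in $\V$, as required.

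I expect the genuine subtlety --- as opposed to the bookkeeping parts --- to be turning the bound ``$\mu(W_k)\to 0$'' into a bound fit for a Martin-L\"of test: since we are given only $\mu(\V)>0$, with no computable lower bound, the decay rate of $\mu(W_k)$ is not effectively known, and this is precisely what the non-uniform choice of the rational $q<\mu(\V)$ repairs (the resulting test depends on $q$, but is a perfectly good Martin-L\"of test, which is all that is needed). A second, milder point is keeping the whole construction $\Sigma^0_1$: the quantifier ``every tail escapes $\V$'' must never be written down globally, which is why one peels off one escaping block at a time via the prefix-free set $S$. Once these are in place the relativisation to $\emptyset^{(n-1)}$ from the first paragraph is routine, since nothing in the construction is accidentally lower in the arithmetical hierarchy than $\Sigma^0_1$ in that oracle.
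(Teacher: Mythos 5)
Your proof is correct and is essentially the standard Ku\v{c}era block-concatenation argument (followed by relativisation to $\emptyset^{(n-1)}$ for higher $n$), which is exactly what the paper delegates to by citing Downey--Hirschfeldt \cite[Theorem 6.10.2]{downey-hirschfeldt-2010}. Since the paper gives no proof of its own beyond this citation, there is nothing further to compare; your writeup supplies the details the reference contains, including the correct observation that only a non-uniform rational lower bound $q<\mu(\V)$ is needed to turn the decaying bound $\mu(W_k)\le(1-q)^k$ into a Martin-L\"of test.
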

\begin{proof}
See e.g.\ Downey and Hirschfeldt \cite[Theorem 6.10.2]{downey-hirschfeldt-2010}.
\end{proof}

\begin{thm}\label{eff-0-1-uni}
Let $n \in \omega$, let $\A$ be a mass problem and let $n\textrm{-Random}$ be the class of $n$-randoms. Then $\A \ured{n} n\textrm{-Random}$ if and only if there exists a $\Pi^0_n$-class $\V$ of positive measure such that $\A \leq_\M \V$.
\end{thm}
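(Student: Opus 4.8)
The plan is to prove both directions directly from the definitions and the effective $0$-$1$-law (Theorem \ref{eff-0-1}).

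\medskip

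\emph{The easy direction ($\Leftarrow$).} Suppose $\V$ is a $\Pi^0_n$-class of positive measure with $\A \leq_\M \V$. I would first observe that the single-element sequence $\V_0 = \V, \V_1 = \V, \dots$ (or, more carefully, a sequence covering $n\textrm{-Random}$) does the job once combined with the $0$-$1$-law. Concretely, for each $k \in \omega$ let $\V_k = \{X \mid X \restriction [k,\infty) \in \V\}$; this is uniformly $\Pi^0_n$ since $\V$ is $\Pi^0_n$ and the shift is computable. By Theorem \ref{eff-0-1}, every $n$-random $X$ lies in some $\V_k$, so the sequence $(\V_k)_{k \in \omega}$ covers $n\textrm{-Random}$. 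Finally, fix (by $\A \leq_\M \V$) a single Turing functional $\Psi$ with $\Psi(\V) \subseteq \A$; then the functional $e_k$ which on input $X$ computes $X \restriction [k,\infty)$ and applies $\Psi$ sends $\V_k$ into $\A$, uniformly in $k$. Hence $\A \leq_n n\textrm{-Random}$.

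\medskip

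\emph{The hard direction ($\Rightarrow$).} Suppose $\A \leq_n n\textrm{-Random}$, witnessed by a uniformly $\Pi^0_n$ sequence $\V_0, \V_1, \dots$ covering $n\textrm{-Random}$ and a computable sequence $e_0, e_1, \dots$ with $\Phi_{e_i}(n\textrm{-Random} \cap \V_i) \subseteq \A$. The key point is that, since the $\V_i$ cover all $n$-randoms, by (the contrapositive of) the $0$-$1$-law at least one $\V_i$ must have positive measure: if every $\V_i$ were null, then $\bigcup_i \V_i$ would be a null $\Sigma^0_n$-set (in fact contained in a null $\Sigma^0_n$ set, using that a uniformly $\Pi^0_n$ sequence of null classes has null union, which is itself an $n$-test-style argument), contradicting that it contains every $n$-random — and there exist $n$-randoms avoiding any fixed null $\Sigma^0_n$ set. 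So fix $i$ with $\mu(\V_i) > 0$. Now I want to replace $n\textrm{-Random} \cap \V_i$ by a genuine $\Pi^0_n$-class of positive measure on which $\Phi_{e_i}$ still lands in $\A$. The issue is that $n\textrm{-Random}$ itself is only $\Sigma^0_{n+1}$, not $\Pi^0_n$, so $n\textrm{-Random} \cap \V_i$ is not directly a $\Pi^0_n$-class. The fix: apply the $0$-$1$-law again to the positive-measure $\Pi^0_n$-class $\V_i$ — for any $n$-random $X$ there is $k$ with $X \restriction [k,\infty) \in \V_i$. So I would intersect with a shift and combine with the Van Lambalgen-style fact that a shift of an $n$-random is $n$-random, or more simply I would argue as follows. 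Let $\W$ be the set of $X$ such that for some/all of the relevant tail conditions $X \in \V_i$; I need $\W$ to be $\Pi^0_n$, of positive measure, and to satisfy $\Phi_{e_i}(\W) \subseteq \A$. The cleanest route is: since $\mu(\V_i) > 0$, by the Lebesgue density theorem there is a string $\sigma$ with $\mu(\V_i \mid \llbracket \sigma \rrbracket) > 1/2$; then $\V := \{X \mid \sigma \conc X \in \V_i\}$ (or $\V_i \cap \llbracket\sigma\rrbracket$ suitably normalised) is a $\Pi^0_n$-class of positive measure. Every element of $\V$, when prefixed by $\sigma$, has positive-measure-conditional membership, and crucially every $n$-random $X$ has a tail in $\V_i$ hence (after trimming) corresponds to an element of $\V$; conversely I need $\Phi_{e_i}$ composed with the prefixing map to land in $\A$, which holds because every such $\sigma \conc X$ with $X$ $n$-random and $\sigma \conc X \in \V_i$ lies in $n\textrm{-Random} \cap \V_i$. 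This requires knowing $\sigma \conc X$ is $n$-random when $X$ is — but by Theorem \ref{eff-0-1} applied suitably, or by closure of $n$-randomness under finite modification of a prefix, $\V$ contains a tail of every $n$-random. Composing the prefixing functional with $\Phi_{e_i}$ then gives a Medvedev reduction $\A \leq_\M \V$.

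\medskip

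\emph{Main obstacle.} The delicate step is the second one: massaging $n\textrm{-Random} \cap \V_i$ (a $\Sigma^0_{n+1}$-set) into an honest $\Pi^0_n$-class of positive measure that still maps into $\A$ under $\Phi_{e_i}$, while keeping the reduction a $\leq_\M$-reduction. I expect this to go through by first extracting positive measure of some $\V_i$ via the $0$-$1$-law (the covering hypothesis is exactly what forces this), then using a density/prefixing argument together with the invariance of $n$-randomness under finite prefix changes so that the chosen $\Pi^0_n$-class both has positive measure and is ``saturated'' enough that the original reduction $\Phi_{e_i}$ restricted to it (precomposed with a fixed computable prefix map) already lands in $\A$. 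Everything else — uniformity of the $\Pi^0_n$ indices, the shift map being computable, the direction $(\Leftarrow)$ — is routine.
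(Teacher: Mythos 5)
Your ($\Leftarrow$) direction is correct and matches the paper's argument: shift $\V$ to obtain the cover $\V_m = 2^{<m} \conc \V$ of the $n$-randoms via the $0$-$1$-law, and precompose $\Psi$ with the corresponding shift.

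You correctly isolate the subtlety in the ($\Rightarrow$) direction: once some $\V_i$ has positive measure, the definition of $\leq_n$ only promises $\Phi_{e_i}(\V_i \cap n\textrm{-Random}) \subseteq \A$, and $\V_i \cap n\textrm{-Random}$ is $\Sigma^0_{n+1}$, not $\Pi^0_n$. (The paper's proof just asserts ``$\A \leq_\M \V_i$ as witnessed by $e_i$'' here, leaving this step implicit.) However, the fix you propose does not close the gap. The density argument produces a $\Pi^0_n$-class $\V = \{X : \sigma \conc X \in \V_i\}$ of positive measure, but $\V$ still contains many non-$n$-random reals $X$, and for those the hypothesis gives no control over $\Phi_{e_i}(\sigma \conc X)$, which may diverge or land outside $\A$. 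The fact you invoke --- that $\sigma \conc X$ is $n$-random whenever $X$ is, by invariance under finite prefix changes --- is true but beside the point: the problem is not that prefixing destroys randomness, it is that $\V \not\subseteq n\textrm{-Random}$, so $X \mapsto \Phi_{e_i}(\sigma \conc X)$ is not a Medvedev reduction from $\A$ to $\V$. No amount of Lebesgue density can fix this, since a cylinder intersected with $\V_i$ always contains positive measure of non-randoms.

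The correct repair is to intersect with a layer of a universal test rather than with a cylinder. Fix a universal $\Sigma^0_n$-test $(\U_k)_{k\in\omega}$ and set $\P_k = \overline{\U_k}$; each $\P_k$ is a $\Pi^0_n$-class with $\P_k \subseteq n\textrm{-Random}$ and $\mu(\P_k) \geq 1 - 2^{-k}$. Since $\mu(\V_i) > 0$ and $\bigcup_k \P_k = n\textrm{-Random}$ has full measure, there is a $k$ with $\mu(\V_i \cap \P_k) > 0$. Then $\V := \V_i \cap \P_k$ is a $\Pi^0_n$-class of positive measure contained in $\V_i \cap n\textrm{-Random}$, so $\Phi_{e_i}(\V) \subseteq \A$ and hence $\A \leq_\M \V$, with no prefixing map needed.
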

\begin{proof}
First, assume $\A \ured{n} n\textrm{-Random}$ and let this be witnessed by $\V_0,\V_1,\dots$ and $e_0,e_1,\dots$. Then some $\V_i$ has positive measure by countable additivity, and $\A \leq_\M \V_i$ as witnessed by $e_i$.

Conversely, let $\V$ be a $\Pi^0_n$-class of positive measure and let $\Psi$ be such that $\Psi(\V) \subseteq \A$.
By the effective 0-1-law, we know that for each $n$-random $X$ there is an $m \in \omega$ such that $X \restriction [m,\infty) \in \V$.
Let $\V_m = 2^{< m} \conc \V$, i.e. $\V_m$ is the $\Pi^0_n$-class consisting of those $X$ such that $X \restriction [m,\infty) \in \V$.
Then the $\V_m$ cover the class of $n$-randoms, as we have just argued. Furthermore, if $m$ is such that $X \in \V_m$, we can compute an element of $\A$ uniformly from $X$ and $m$, namely $\Psi(X \restriction [m,\infty))$.
\end{proof}

That for some $\Pi^0_n$-classes $\V$ we have that $n$-reducibility is optimal, i.e.\ that there are $\Pi^0_n$-classes $\V$ and mass problems $\A$ such that $\A \leq_\M \V$ but $\A \not\leq_m n\textrm{-Random}$ for any $m < n$, will follow from the next theorem.

For $n \in \omega$ with $n \geq 1$, we say that $f$ is \emph{$n$-DNC} if $f$ is DNC relative to $\emptyset^{(n-1)}$. Furthermore, we say that $X$ is \emph{$\omega$-random} if $X$ is arithmetically random, i.e.\ if $X$ is $n$-random for every $n \in \omega$, and we say that $f$ is \emph{$\omega$-DNC} if $f$ is $n$-DNC for every $n \in \omega$.  Ku\v{c}era \cite{kucera-1985} has shown that every $n$-random set computes an $n$-DNC function, which lies at the basis of the following theorem.

\begin{thm}\label{thm-dnc}
Let $n \in \omega + 1$ with $n \geq 1$. Then $n\textrm{-DNC}$ Muchnik-reduces to $n$-randomness, with uniformity $n$.
\end{thm}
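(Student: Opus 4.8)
The plan is to prove the statement in three parts: first that $n\textrm{-DNC}$ Muchnik-reduces to $n$-randomness, then that the uniformity is at most $n$, and finally that it is at least $n$, i.e.\ that $n\textrm{-DNC} \nured{m} n\textrm{-Random}$ for all $m < n$.

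For the Muchnik reduction and the upper bound on uniformity, I would invoke Ku\v{c}era's theorem: every $n$-random set $X$ computes an $n$-DNC function. In fact, the standard argument shows more, namely that there is a single $\Pi^0_n$-class $\V$ of positive measure all of whose members \emph{uniformly} compute $n$-DNC functions. (The class $\V$ consists, roughly, of those $X$ such that $X$ codes a sequence of $2^{k}$-strings that avoids being captured by the appropriate universal $\Sigma^0_n$ approximation; positivity of the measure follows from a Borel--Cantelli / weight-counting estimate, and the computation of the DNC value at input $e$ is read off uniformly from $X$.) Once we have such a $\V$, Theorem \ref{eff-0-1-uni} immediately gives $n\textrm{-DNC} \ured{n} n\textrm{-Random}$, hence the uniformity is at most $n$. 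For $n = \omega$ the Muchnik reduction is all that is claimed, so only the finite case needs the uniformity discussion. (Alternatively, since $n\textrm{-DNC}$ is a $\Pi^0_n$-class for finite $n$, Proposition \ref{prop-max-uni} already yields uniformity at most $\max(n-1,2)$, which combined with the lower bound still pins it down for $n \geq 3$; but the direct route via Theorem \ref{eff-0-1-uni} and an explicit $\V$ is cleaner and also covers $n = 1, 2$.)

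The main work, and the expected main obstacle, is the lower bound: showing $n\textrm{-DNC} \nured{m} n\textrm{-Random}$ for $m < n$. By Theorem \ref{eff-0-1-uni}, such an $m$-reduction would give a $\Pi^0_m$-class $\V$ of positive measure with $n\textrm{-DNC} \leq_\M \V$, i.e.\ a single Turing functional $\Phi$ sending every member of $\V$ to an $n$-DNC function. I would derive a contradiction by a measure-theoretic / forcing-with-positive-measure argument: because $\V$ is only $\Pi^0_m$ with $m < n$ and has positive measure, one can, using the oracle $\emptyset^{(n-1)}$ (or even just $\emptyset^{(m)}$), build an $X \in \V$ whose image $\Phi(X)$ is \emph{not} $\emptyset^{(n-1)}$-DNC. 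The point is that membership in $\V$ is a $\Pi^0_m$ (hence $\emptyset^{(m)}$-decidable, and a fortiori $\emptyset^{(n-1)}$-computable along a path) condition of positive measure, so $\emptyset^{(n-1)}$ can thin out $\V$ to force $\Phi(X)(e) = \emptyset^{(n-1)}$-value for a suitable $e$ (or force $\Phi(X)$ partial on a positive-measure remnant, contradicting totality on all of $\V$). Concretely: either there is an $e$ such that $\{X \in \V \mid \Phi(X)(e){\downarrow} = \Phi^{\emptyset^{(n-1)}}_{e}(e)\}$ has positive measure — giving a non-DNC image directly — or for every $e$ this set is null, which by $\emptyset^{(m)}$-effective countable additivity lets us stage-by-stage remove null pieces and land in $\bigcap_e \{X \mid \Phi(X)(e){\uparrow} \text{ or } \Phi(X)(e) \neq \Phi^{\emptyset^{(n-1)}}_{e}(e)\}$ while staying in a positive-measure subclass of $\V$; but then on that remnant $\Phi(X)$, if total, is DNC, and we instead drive $\Phi(X)$ to be partial, again contradicting $\Phi(X) \in n\textrm{-DNC}$ for all $X \in \V$.

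The delicate points I expect to have to handle carefully are: (i) the exact complexity bookkeeping — $\V$ being $\Pi^0_m$ lets $\emptyset^{(m)}$, and hence $\emptyset^{(n-1)}$ since $m \leq n-1$, compute measures of the relevant clopen-relative subclasses and decide the branching, which is precisely what makes $n > m$ essential; (ii) ensuring the thinned-out intersection still has positive measure (a convergent-sum / $\epsilon/2^{k}$ reservation argument); and (iii) for $n = \omega$ the statement only asks for the Muchnik reduction, so no lower bound is needed there, but one should note that $\omega\textrm{-DNC} \nured{m} \omega\textrm{-Random}$ for every finite $m$ follows from the finite cases since $\omega\textrm{-DNC} \subseteq n\textrm{-DNC}$ and $\omega\textrm{-Random} \subseteq n\textrm{-Random}$ push the obstruction down. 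I would present the lower bound as the heart of the proof, with the positive-measure forcing lemma isolated as the key technical step.
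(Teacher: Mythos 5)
Your upper bound matches the paper: the standard Ku\v{c}era proof gives a $\Pi^0_n$-class $\V$ of positive measure with $n$-DNC $\leq_\M \V$, and Theorem \ref{eff-0-1-uni} converts this into $n$-DNC $\ured{n} n$-Random. (One small slip: $n$-DNC is $\Pi^0_n$, hence $\Sigma^0_{n+1}$, so Proposition \ref{prop-max-uni} gives uniformity at most $\max(n,2)$, not $\max(n-1,2)$ as you wrote.)

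The lower bound, however, has a genuine gap. Your dichotomy does not close: in the second case you land in a positive-measure subclass of $\V$ on which $\Phi(X)$, when total, is DNC --- but that is exactly what the reduction guarantees, so there is no contradiction. You then say ``we instead drive $\Phi(X)$ to be partial,'' but you cannot: $\Phi$ is total on $\V \cap n\textrm{-Random}$ (which has full measure within $\V$), so every positive-measure subclass you thin down to still contains points with total DNC image. More fundamentally, trying to build a specific $X \in \V$ whose image is not $n$-DNC is doomed from the start, since the assumed reduction says no such $X$ exists. The paper's contradiction is of a different kind: one does not exhibit a bad $X$, but rather uses the positive-measure $\Pi^0_m$-structure to show that $\emptyset^{(m)}$ \emph{computes} an $n$-DNC function. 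Given $k$, one searches (computably in $\emptyset^{(m)}$, which decides measures of $\Pi^0_m$-classes) for the first $\sigma$ and index $i$ such that $\V_i$ has positive measure above $\sigma$ and $\Phi_{e_i}(\sigma)(k){\downarrow}$, and sets $f(k) = \Phi_{e_i}(\sigma)(k)$. Since $\V_i$ has positive measure above $\sigma$, there is an $n$-random extension $X \supseteq \sigma$ in $\V_i$, so $f(k) = \Phi_{e_i}(X)(k) \neq \{k\}^{\emptyset^{(n-1)}}(k)$. Thus $f$ is $n$-DNC and $\emptyset^{(m)}$-computable, hence $\emptyset^{(n-1)}$-computable (as $m \leq n-1$), which is impossible. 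This is the ``majority vote'' step your sketch is missing; the dichotomy you set up should be replaced by this direct computation.

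Finally, you misread the $n = \omega$ case: the claim ``with uniformity $\omega$'' does include a lower bound, namely that no finite $m$ works. Your suggested reduction to the finite cases does not go through as stated, because a $\Pi^0_m$-cover of $\omega$-Random need not cover $n$-Random and $\Phi_{e_i}$ is only guaranteed to send $\V_i \cap \omega\textrm{-Random}$ into $\omega$-DNC, not $\V_i \cap n\textrm{-Random}$ into $n$-DNC. The paper instead runs the same $\emptyset^{(m)}$-computation argument directly: it produces an $\emptyset^{(m)}$-computable $\omega$-DNC function, which again is absurd.
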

\begin{proof}
Let us first consider $n \in \omega$.
From the standard proof of Ku\v{c}era's result mentioned above (see e.g.\ \cite[Theorem 8.8.1]{downey-hirschfeldt-2010}) we know that there is a $\Pi^{0,\emptyset^{(n-1)}}_1$-class $\V$ (which is a specific kind of a $\Pi^0_n$-class) for which $n$-DNC Medvedev-reduces to $\V$. Now apply Theorem \ref{eff-0-1-uni}. 
Alternatively, for $n \geq 2$ we can use that the collection of $n$-DNC functions is $\Pi^{0,\emptyset^{(n-1)}}_1$, and apply Proposition \ref{prop-max-uni}.

On the other hand, let $m < n$ and assume towards a contradiction that $n$-DNC $m$-reduces to $n$-randomness. Fix witnesses $\V_0,\V_1,\dots$ and $e_0,e_1,\dots$ for this fact (so $\V_0,\V_1,\dots$ are uniformly $\Pi^0_m$). Without loss of generality, we may assume that every $\V_i$ is a subclass of $2^\omega$.
We show that $\emptyset^{(m)}$ computes an $n$-DNC function $f$, a contradiction.

Indeed, given $k \in \omega$, to define $f(k)$, look for the first $\sigma \in 2^{<\omega}$ and $i \in \omega$ such that $\V_i$ has strictly positive measure above $\sigma$ and such that $\Phi_{e_i}(\sigma)(k)\downarrow$. Since the measure of a $\Pi^0_m$-class is $\emptyset^{(m)}$-computable, we can do this computably in $\emptyset^{(m)}$. Furthermore, note that such $\sigma$ and $i$ have to exist by countable additivity. Now $\Phi_{e_i}(\sigma)(k) \not= \{k\}^{\emptyset^{(n-1)}}(k)$ since $\sigma$ can be extended to an n-random within $\V_i$ (after all, it has positive measure above $\sigma)$ and $\Phi_{e_i}(\V_i) \subseteq n\textrm{-DNC}$. Thus, $f$ is $n$-DNC, as desired.

Finally, let us consider $n=\omega$. The upper bound (i.e.\ the fact that $\omega$-DNC Muchnik-reduces to $\omega$-randomness) again follows from the standard proof. On the other hand, if $\omega$-randomness $m$-reduces to $\omega$-DNC for some natural number $m$, then the same argument as above shows that $\emptyset^{(m)}$ computes an $\omega$-DNC function, which is again a contradiction.
\end{proof}

The previous theorem now also allows us to separate $n$-reducibility from $m$-reducibility for $n \not= m$.

\begin{cor}\label{cor-proper}
For every $n \in \omega + 1$ there are mass problems $\A \leq_n \B$ such that for no $m < n$ we have $\A \leq_m \B$.
\end{cor}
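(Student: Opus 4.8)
The plan is to read this off directly from Theorem \ref{thm-dnc}, which already does all the work. For $n$ with $1 \leq n \leq \omega$, I would set $\A = n\textrm{-DNC}$ and $\B = n\textrm{-Random}$, the class of $n$-randoms. Theorem \ref{thm-dnc} states precisely that $\A$ Muchnik-reduces to $\B$ with uniformity $n$; unwinding the definition of uniformity (the least $k \in \omega+1$ with $\A \ured{k} \B$), this says exactly that $\A \ured{n} \B$ while $\A \nured{m} \B$ for every $m < n$. For $n = \omega$ the same reading applies, using the convention that $\ured{\omega}$ is Muchnik reducibility, so that $\A \ured{\omega} \B$ holds but $\A \nured{m} \B$ for every natural number $m$.

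The only case not covered by Theorem \ref{thm-dnc} is $n = 0$, but this is degenerate: there is no $m < 0$, so the second clause is vacuous, and we may take $\A = \B$ to be any mass problem whatsoever (e.g.\ $\A = \B = \omega^\omega$), since $\A \ured{0} \A$ by reflexivity of $\ured{0}$.

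I do not anticipate any obstacle: the substantive content has already been established in Theorem \ref{thm-dnc} (resting in turn on Theorem \ref{eff-0-1-uni} and Ku\v{c}era's theorem that every $n$-random computes an $n$-DNC function). If a second witnessing family were wanted for the particular value $n = 2$, one could instead invoke the Corollary following the two theorems of Jockusch and of Higuchi--Kihara, giving that the uniformity of $\mathrm{DNR}_2$ to $\mathrm{DNR}_3$ is $2$; but the single family $\bigl(n\textrm{-DNC},\, n\textrm{-Random}\bigr)$ already suffices uniformly for all $n \in \omega + 1$.
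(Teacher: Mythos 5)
Your proof is correct and is exactly the argument the paper intends: the corollary appears immediately after Theorem \ref{thm-dnc} with the remark that ``the previous theorem now also allows us to separate $n$-reducibility from $m$-reducibility,'' and reading the uniformity claim off that theorem (for $n \geq 1$, including $n = \omega$), together with the vacuous $n=0$ case, is all there is to it.
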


Recall that $n$-DNC$_{2^m}$ is the class of $n$-DNC functions $f$ for which $f(m) \leq 2^m$.

\begin{cor}
Let $n \in \omega + 1$ with $n \geq 1$. Then $n$-DNC$_{2^m}$ Muchnik-reduces to $n$-randomness, with uniformity $n$. \end{cor}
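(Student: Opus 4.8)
The plan is to deduce this from Theorem \ref{thm-dnc} together with the known relationship between $n$-DNC and $n$-DNC$_{2^m}$. The key observation is that $n$-DNC$_{2^m}$ and $n$-DNC are Medvedev-equivalent, hence in particular $n$-equivalent for every $n$; so the uniformity of $n$-DNC$_{2^m}$ to $n$-randomness equals the uniformity of $n$-DNC to $n$-randomness, which is $n$ by Theorem \ref{thm-dnc}.

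More concretely, I would first recall the standard fact (essentially due to Jockusch, cf.\ the discussion of $\mathrm{DNR}_2 \equiv_w \mathrm{DNR}_3$ above, but in the stronger Medvedev form) that from any $n$-DNC function one can uniformly compute an $n$-DNC$_{2^m}$ function, and trivially conversely. Thus $n\textrm{-DNC} \equiv_\M n\textrm{-DNC}_{2^m}$. Since Medvedev-equivalence implies $m$-equivalence for all $m \in \omega+1$ (the identity map on $\mathcal{P}(\omega^\omega)$ induces the surjections $\M_0 \to \M_m$ preserving the order), we get $n\textrm{-DNC} \equiv_m n\textrm{-DNC}_{2^m}$ for every $m$, and in particular $n\textrm{-DNC}_{2^m} \leq_m n\textrm{-Random}$ if and only if $n\textrm{-DNC} \leq_m n\textrm{-Random}$. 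Applying Theorem \ref{thm-dnc} then gives that the uniformity of $n\textrm{-DNC}_{2^m}$ to $n$-randomness is exactly $n$.

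The only mild subtlety — and the step I would be most careful about — is making sure the uniform reduction from $n$-DNC to $n$-DNC$_{2^m}$ really is uniform and total on all of $n$-DNC, not just relative to some oracle, since we need Medvedev (not merely Muchnik) equivalence to carry the full statement including the upper bound. This is handled by the usual trick of speeding up and bounding: given an $n$-DNC function $g$, define $h(m)$ by searching, for increasingly long finite approximations, for a value not equal to $\{m\}^{\emptyset^{(n-1)}}(m)$ among the first $2^m+1$ possibilities using $g$ as an oracle; the convergence of this search is guaranteed precisely because $g$ is $n$-DNC. I would simply cite the standard argument rather than reproduce it. The conclusion then follows immediately.
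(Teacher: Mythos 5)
Your reduction of the corollary to Theorem~\ref{thm-dnc} hinges on the claim that $n\textrm{-DNC} \equiv_\M n\textrm{-DNC}_{2^m}$, and this claim is false. Only one direction is trivial: since $n\textrm{-DNC}_{2^m} \subseteq n\textrm{-DNC}$, the identity functional gives $n\textrm{-DNC} \leq_\M n\textrm{-DNC}_{2^m}$. The other direction, $n\textrm{-DNC}_{2^m} \leq_\M n\textrm{-DNC}$, would require a uniform way to pass from an arbitrary (possibly unbounded) $n$-DNC function to one bounded by $2^m$. This fails already at the level of Muchnik reducibility: there exist DNC functions that compute no $h$-bounded DNC function for any fixed computable order $h$ (this is shown by forcing with bushy trees; see Khan and Miller, building on the techniques of Ambos-Spies, Kjos-Hanssen, Lempp and Slaman). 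The Jockusch result you invoke ($\mathrm{DNR}_2 \equiv_w \mathrm{DNR}_3$, and its uniform Medvedev version for constant bounds $\geq 2$) is about moving between two \emph{constant} bounds, which is a very different matter from compressing an unbounded DNC function into one with a prescribed computable bound. Your sketched construction (``search for a value not equal to $\{m\}^{\emptyset^{(n-1)}}(m)$ among the first $2^m+1$ possibilities using $g$ as an oracle'') does not describe a well-defined procedure: one does not know $\{m\}^{\emptyset^{(n-1)}}(m)$, and it is not explained how $g$ would single out a bounded value avoiding it.

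Note that the easy direction of your argument is fine and does establish the \emph{lower} bound: if $n\textrm{-DNC}_{2^m} \leq_k n\textrm{-Random}$ for some $k < n$, then composing with the trivial inclusion $n\textrm{-DNC} \leq_\M n\textrm{-DNC}_{2^m}$ gives $n\textrm{-DNC} \leq_k n\textrm{-Random}$, contradicting Theorem~\ref{thm-dnc}. What is missing is the \emph{upper} bound $n\textrm{-DNC}_{2^m} \leq_n n\textrm{-Random}$. The paper obtains this not by an abstract reduction to Theorem~\ref{thm-dnc} but by observing that the same construction used there already does the job: Ku\v{c}era's standard proof (\cite[Theorem 8.8.1]{downey-hirschfeldt-2010}) produces a $\Pi^{0,\emptyset^{(n-1)}}_1$-class $\V$ from which one uniformly computes a DNC-relative-to-$\emptyset^{(n-1)}$ function that is in fact bounded by $2^m$, so $n\textrm{-DNC}_{2^m} \leq_\M \V$, and Theorem~\ref{eff-0-1-uni} then gives $n\textrm{-DNC}_{2^m} \leq_n n\textrm{-Random}$. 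So the correct route is to re-run the argument of Theorem~\ref{thm-dnc}, keeping track of the bound produced by the construction, rather than to invoke a Medvedev equivalence between the bounded and unbounded DNC classes.
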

\begin{proof}
This follows from the same proof as Theorem \ref{thm-dnc}.
\end{proof}

Next, let us compare our notion with the notion of layerwise computability introduced by Hoyrup and Rojas \cite{hoyrup-rojas-2009}. When looking at the uniformity of some $\A$ to $n$-randomness, we allow arbitrary $\Pi^0_n$-classes covering the class of $n$-randoms to witness this reduction. Instead, we could also decide to only allow the class $\V_n$ to be the $n$th layer, i.e.\ the complement of $\U_n$ for some fixed universal $n$-randomness test $\U_0,\U_1,\dots$. We now show that this is strictly weaker than our notion.

\begin{prop}
We do not have that $n$-DNC$_{2^m}$ reduces layerwise to $n$-ran\-dom\-ness; that is, there is no computable 
$e_0,e_1,\dots$ such that $\Phi_{e_i}(\V_i)$ is contained in $n$-DNC$_{2^m}$ for every $i \in \omega$, where $\V_i$ is the complement of $\U_i$ and $\U_0,\U_1,\dots$ is some fixed universal $n$-randomness test.
\end{prop}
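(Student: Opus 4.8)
The plan is to derive a contradiction from the existence of a computable sequence $e_0,e_1,\dots$ with $\Phi_{e_i}(\V_i)\subseteq n\textrm{-DNC}_{2^m}$, where $\V_i$ is the $i$th layer of a fixed universal $n$-randomness test. The key obstacle, and the reason this is stronger than the mere failure of a full $n$-reduction, is that the layers $\V_i$ are not arbitrary $\Pi^0_n$-classes: they are nested, $\V_0\subseteq\V_1\subseteq\cdots$, and $\mu(\V_i)\geq 1-2^{-i}$, so they carry much more structure than an arbitrary uniformly $\Pi^0_n$ cover of the $n$-randoms. I would exploit exactly this: a single functional $\Phi_{e_i}$ has to produce an $n$-DNC$_{2^m}$ function on a class of measure at least $1-2^{-i}$, and by a majority-vote / measure argument this should force enough of the values to be computable to contradict the DNC condition.

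First I would fix $i$ large enough that $\mu(\V_i)>1/2$ (say $i=1$ suffices, or any $i\geq 1$). Working with the single functional $\Psi=\Phi_{e_i}$, which maps a set of $n$-randoms of measure $>1/2$ into $n\textrm{-DNC}_{2^m}$, I would attempt to $\emptyset^{(n-1)}$-compute (or $\emptyset^{(n)}$-compute, mirroring the bound in Theorem~\ref{thm-dnc}) an $n$-DNC function, which is the desired contradiction. The idea is: to define $f(k)$, search $\emptyset^{(n)}$-computably for a string $\sigma\in 2^{<\omega}$ such that the conditional measure $\mu(\V_i\mid\llbracket\sigma\rrbracket)$ is large (close to $1$) and such that $\Psi(\sigma)(k){\downarrow}$ with some value $v\leq 2^k$; such $\sigma$ exists by the Lebesgue density theorem applied to $\V_i$ together with the fact that $\Psi$ is total on all of $\V_i$. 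Then I would argue that $\sigma$ can be extended to an $n$-random inside $\V_i$ (its conditional measure above $\sigma$ being positive — indeed close to $1$), so $\Psi(\sigma)(k)=v$ must equal the $k$th value of an $n$-DNC$_{2^m}$ function, hence $v\neq\{k\}^{\emptyset^{(n-1)}}(k)$; setting $f(k)=v$ makes $f$ an $n$-DNC function computable from $\emptyset^{(n)}$, contradicting the fact that no set below $\emptyset^{(n)}$ is $n$-DNC. Actually, since only one fixed $\V_i$ is involved and its measure is high, one does not even need the full density argument over all $\sigma$: it is enough to search for the first $\sigma$ of positive conditional $\V_i$-measure on which $\Psi$ converges at $k$, exactly as in the proof of Theorem~\ref{thm-dnc}, and the whole search is $\emptyset^{(n)}$-computable because the measure of a $\Pi^0_n$-class is $\emptyset^{(n)}$-computable; this already yields an $n$-DNC function computable from $\emptyset^{(n)}$, a contradiction. (If one wants the sharper bound $\emptyset^{(n-1)}$ to match the two-sided result, one uses that the relevant layer is a $\Pi^{0,\emptyset^{(n-1)}}_1$-class whose measure is $\emptyset^{(n-1)}$-computable.)

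The main subtlety I expect is ensuring that the string $\sigma$ selected really extends to an $n$-random within $\V_i$: this needs that $\V_i$ has positive measure above $\sigma$, which holds by construction of the search, together with the effective $0$-$1$-law / the fact that $n$-randoms are dense in any positive-measure $\Pi^0_n$-class, so that $\Psi$ is actually defined and lands in $n\textrm{-DNC}_{2^m}$ on some genuine $n$-random extending $\sigma$. Once that point is pinned down, the contradiction is immediate: we would have $\emptyset^{(n)}\geq_T f$ with $f$ an $n$-DNC function, which is impossible. Thus no such computable $e_0,e_1,\dots$ exists, proving that $n\textrm{-DNC}_{2^m}$ does not reduce layerwise to $n$-randomness.
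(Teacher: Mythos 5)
There is a genuine gap, and it sits exactly at the place you flag as "the whole search is $\emptyset^{(n)}$-computable\dots contradicting the fact that no set below $\emptyset^{(n)}$ is $n$-DNC." That claimed fact is false: an $n$-DNC function is a function DNC relative to $\emptyset^{(n-1)}$, and $\emptyset^{(n)} = (\emptyset^{(n-1)})'$ always computes a function that is DNC relative to $\emptyset^{(n-1)}$ (this is just the relativized statement that $A'$ computes a DNC$^A$ function). So producing an $n$-DNC function from $\emptyset^{(n)}$ is no contradiction at all. The parenthetical attempt to sharpen to $\emptyset^{(n-1)}$ does not repair this: the measure of a $\Pi^{0,\emptyset^{(n-1)}}_1$-class is only co-c.e.\ relative to $\emptyset^{(n-1)}$, hence $\emptyset^{(n)}$-computable, and deciding positivity above a string is in general $\Sigma^{0,\emptyset^{(n-1)}}_2$; you have not shown the relevant search is $\emptyset^{(n-1)}$-effective, and there is no reason to expect it to be.

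The ingredient you are missing is the one that makes the statement be about $n$-DNC$_{2^m}$ rather than $n$-DNC: the boundedness of the values. The paper's proof produces a fully \emph{computable} $n$-DNC$_{2^m}$ function $f$, which really is a contradiction. It does so with no measure oracle at all: to compute $f(m)$, search for the first $i,k$ such that at least measure $2^{-i+1}$ worth of strings $\sigma$ have $\Phi_{e_i}(\sigma)(m){\downarrow}=k$. This search is $\Sigma^0_1$, hence computable. Such $i,k$ exist because on $\V_i$ the values $\Phi_{e_i}(X)(m)$ lie in $\{0,\dots,2^m\}$, so pigeonhole over this bounded range together with $\mu(\V_i)\geq 1-2^{-i}$ forces some value $k$ to be taken on measure at least $(1-2^{-i})/(2^m+1)$, which exceeds $2^{-i+1}$ for $i$ large. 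An inclusion--exclusion estimate then shows $\V_i\cap\{X:\Phi_{e_i}(X)(m)=k\}$ has positive measure, so some $X\in\V_i$ realizes the value $k$, giving $k\neq\{m\}^{\emptyset^{(n-1)}}(m)$. Without the bound $2^m$ the pigeonhole fails and one is thrown back on measure computations, which is why the paper's subsequent remark notes that the unbounded $n$-DNC variant is unclear. In short: the route via conditional measure and $\emptyset^{(n)}$ cannot deliver the needed contradiction, and you need to replace it with the computable majority-vote argument over the bounded range.
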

\begin{proof}
Towards a contradiction, assume such a sequence $e_i$ exists; we show that there is a computable $n$-DNC$_{2^m}$ function $f$, which is a contradiction. Given $m$, to define $f(m)$, look for the first $i \in \omega$ and $k \in \omega$ such that at least measure $2^{-i+1}$-many strings $\sigma$ satisfy $\Phi_{e_i}(\sigma)(m){\downarrow}=k$. Such a string must exist, since for every $X \in \V_i$ we have that $\Phi_{e_i}(X)(m){\downarrow} \in \{0,\dots,2^m\}$ and $\V_i$ has measure at least $1 - 2^{-i}$, so at least measure $\frac{1 - 2^{-i}}{2^m + 1}$-many $X$ must be sent to the same value $k$, and if $i$ is large enough then $\frac{1 - 2^{-i}}{2^m + 1}$ is at least $2^{-i+1}$.
Furthermore, for any $i$ and $k$ such that at least measure $2^{-i+1}$-many strings $\sigma$ satisfy $\Phi_{e_i}(\sigma)(m){\downarrow}=k$ we know that
\[\mu(\V_i \cap \{X \mid \Phi_{e_i}(X)(m){\downarrow}=k\}) \geq 1 - (2^{-i} + 1 - 2^{-i+1}) = 2^{-i} > 0,\]
so there is some $X$ in this set and for such $X$ we have that $\Phi_{e_i}(X)$ is $n$-DNC$_{2^m}$, so $k \not= \{m\}^{\emptyset^{(n-1)}}(m)$. Therefore $f$ is $n$-DNC$_{2^m}$, as desired.
\end{proof}

\begin{rem}
If we formulate the previous proposition with $n$-DNC instead of $n$-DNC$_{2^m}$, the truth of this variant seems to depend on the chosen universal $n$-randomness test. However, since our main point was to illustrate that layerwise reducibility is weaker than $n$-reducibility to $n$-randomness, we will not pursue this topic further.
\end{rem}

Next, let us study the result from Kautz \cite{kautz-1991} which states that every 2-random degree is hyperimmune, i.e.\ that every 2-random set computes a function which is not computably dominated. Note that the non-computably-dominated functions form a $\Pi^0_3$-class, so Proposition \ref{prop-max-uni} tells us that the uniformity is at most $3$. However, our version of the effective 0-1-law (Theorem \ref{eff-0-1-uni}) tells us that it is even at most $2$. We next show that the uniformity is exactly 2.

\begin{thm}\label{thm-random-hi}
The uniformity of the non-computably-dominated functions to the 2-random sets is 2.
\end{thm}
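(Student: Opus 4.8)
The plan is to establish the two inequalities separately. The upper bound (uniformity $\leq 2$) is essentially free: by Theorem \ref{eff-0-1-uni}, it suffices to exhibit a $\Pi^0_2$-class $\V$ of positive measure such that the non-computably-dominated functions Medvedev-reduce to $\V$. I would take $\V$ to be (a positive-measure piece of) the class of sufficiently fast-growing reals coming from the slowdown of the universal test, or more simply use that there is a $\Pi^0_2$-class of positive measure all of whose members, read off in the obvious way as principal functions, dominate every computable function; Kautz's construction already produces such a class. Then Theorem \ref{eff-0-1-uni} with $n=2$ gives $\mathrm{(non\text{-}comp.\text{-}dom.)} \leq_2 2\text{-Random}$, hence uniformity at most $2$.

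For the lower bound I must show the uniformity is not $\leq 1$, i.e.\ there is \emph{no} uniformly $\Pi^0_1$-sequence $\V_0,\V_1,\dots$ covering the $2$-randoms together with a computable sequence $e_0,e_1,\dots$ such that each $\Phi_{e_i}(\V_i\cap 2\text{-Random})$ consists of non-computably-dominated functions. The strategy mirrors the lower-bound argument in Theorem \ref{thm-dnc}: assume such witnesses exist and derive a contradiction by building, computably in $\emptyset'$ (the jump suffices to evaluate measures of $\Pi^0_1$-classes), a \emph{single} computable function that dominates all the putative outputs — in fact I would build a $\emptyset'$-computable function $g$ that dominates $\Phi_{e_i}(X)$ for a positive-measure set of $X$ inside each $\V_i$, and then extract from it a genuinely computable function dominating a fixed $2$-random's output, contradicting non-computable-domination. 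Concretely: by countable additivity some $\V_i$ has positive measure and meets the $2$-randoms on a positive-measure set; working above a finite string $\sigma$ of positive measure in $\V_i$, use the fact that $\Phi_{e_i}$ is total on a measure-one subclass to define, via a majority-vote / measure argument as in the layerwise-reducibility proposition above, for each input $k$ a value $g(k)$ such that $\{X\in\V_i : \Phi_{e_i}(X)(k)\le g(k)\text{ for all }k\le m\}$ retains positive measure for every $m$. This $g$ is computable (the bounds $g(k)$ are found by searching for a $k$-th stage where enough measure has converged below some value), and by compactness the nested positive-measure sets have nonempty intersection, which must contain a $2$-random $X$; but then $\Phi_{e_i}(X)$ is dominated by the computable function $g$, contradicting that it is non-computably-dominated.

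The main obstacle is the extraction step: getting an honest \emph{computable} dominating function rather than merely a $\emptyset'$-computable one. The trick, already visible in the preceding propositions, is that for $\Pi^0_1$-classes the relevant measures are approximable from above computably, so one does not need $\emptyset'$ to \emph{compute} $g$ — one only needs it to certify (non-uniformly, after fixing the good index $i$ and string $\sigma$) that the search always succeeds. I would organize the argument so that the value $g(k)$ is defined by waiting for a stage $s$ at which the measure of $\{X\supseteq\sigma : \Phi_{e_i}(X\restriction s)(k)\!\downarrow\}$ exceeds some threshold and then taking $g(k)$ larger than all the finitely many values $\Phi_{e_i}(X\restriction s)(k)$ that have appeared with nonnegligible measure; the $\emptyset'$-information is used only in the verification that this threshold is eventually met with enough room to spare, exactly as $\frac{1-2^{-i}}{2^m+1}\ge 2^{-i+1}$ was used in the layerwise proposition. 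Once the computable $g$ is in hand, the contradiction is immediate from the effective $0$-$1$-law (Theorem \ref{eff-0-1}) applied to the positive-measure $\Pi^0_1$-class sitting inside $\V_i$, which guarantees a $2$-random with output dominated by $g$.
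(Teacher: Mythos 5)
Your proposal matches the paper's proof: upper bound via Theorem \ref{eff-0-1-uni}, and lower bound by a majority-vote measure argument that, from a putative $\Pi^0_1$-witness $\V_0$ of positive measure, computably constructs a dominating function $f$ together with a nested sequence of positive-measure $\Pi^0_1$-subclasses $\W_0\supseteq\W_1\supseteq\cdots$ of $\V_0$ on which $f$ dominates $\Phi_{e_0}$. One small correction: at the end you should conclude from the intersection having \emph{positive measure} (not merely being nonempty by compactness) that it contains a $2$-random; this requires the measure losses to be summable, which the paper arranges by losing at most $q\,2^{-s-3}$ at stage $s$ so that $\bigcap_s\W_s$ retains measure at least $q/2$.
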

\begin{proof}
The upper bound follows from the standard proof and our effective 0-1-law, as discussed above. Towards a contradiction, assume there exists a sequence $\V_0,\V_1,\dots \subseteq 2^\omega$ of uniformly $\Pi^0_1$-classes and uniformly computable $e_0,e_1,\dots$ which witness that the non-computably-dominated functions 1-reduce to the 2-random sets.
 Then at least one $\V_i$ has positive measure; without loss of generality we may assume this is $\V_0$. Let $q > 0$ be a rational such that $\V_0$ has measure at least $q$. We will use a majority vote argument to show that there is a computable function which is not computably dominated, a contradiction.
 
We will define a computable function $f$ and a sequence $\V_0 \supseteq \W_0 \supseteq \W_1 \supseteq \dots$ of $\Pi^0_1$-classes such that every $\W_i$ has measure at least $q \left(\frac{1}{2} + 2^{-i-2}\right)$ and such that $f(i) \geq \Phi_{e_0}(X)(i)$ for every $X \in \W_i$. In particular, $\W = \bigcap_{i \in \omega} \W_i$ has positive measure, so it is non-empty. Furthermore, if $X \in \W \subseteq \V_0$, then $f$ dominates the function $\Phi_{e_0}(X)$, a contradiction.

We let $W_{-1} = \V_0$. At stage $s$ we define $f(s)$ and $\W_s$. Let $U_{s-1}$ be a prefix-free set of strings such that $\llbracket U_{s-1} \rrbracket = \overline{\W_{s-1}}$. Look for $n \in \omega$ such that for at least measure $1 - q 2^{-s-3}$ many strings $\sigma \in 2^n$ we have that either $\sigma \in U_{s-1}$ or  $\Phi_{e_0}(\sigma)(s){\downarrow}$. Such an $n$ must exist since $\W_{s-1} \subseteq \V_0$. Once this happens, we let
\[\W_s = \W_{s-1} \setminus \left\llbracket \left\{\sigma \in 2^n \mid \Phi_{e_0}(\sigma)(s){\uparrow}\right\}\right\rrbracket\]
and we let $f(s)$ be the maximum of $\Phi_{e_0}(\sigma)(s)$ for those $\sigma \in 2^n$ for which $\Phi_{e_0}(\sigma)(s){\downarrow}$. Then $\W_s$ has measure at least
\[\mu(\W_{s-1}) - q 2^{-s-3} = q \left(\frac{1}{2} + 2^{-s-3} + 2^{-s-3}\right) = q \left(\frac{1}{2} + 2^{-s-2}\right),\]
and it is clear that $f(s) \geq \Phi_{e_0}(X)(s)$ for every $X \in \W_s$.
\end{proof}

Next, we consider another result from Kautz \cite{kautz-1991} which states that the 2-random sets even compute 1-generic sets.

\begin{thm}
The uniformity of 1-genericity to 2-randomness is 2.
\end{thm}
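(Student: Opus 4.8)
The plan is to prove the two bounds separately: that $1$-genericity $2$-reduces to $2$-randomness, and that it does not $1$-reduce to $2$-randomness (which also rules out a Medvedev reduction).

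For the upper bound, observe first that the class of $1$-generic sets is only $\Pi^0_3$, so Proposition~\ref{prop-max-uni} gives merely that the uniformity is at most $3$. To bring it down to $2$ I would invoke our effective $0$-$1$-law: by Theorem~\ref{eff-0-1-uni} it suffices to exhibit a $\Pi^0_2$-class $\V$ of positive measure with $1\text{-genericity} \leq_\M \V$. This is precisely what the standard proof of Kautz's theorem that every $2$-random set computes a $1$-generic provides, once one tracks the measures: relativising the usual $\emptyset'$-forcing construction to a $\emptyset'$-ML-test, one obtains a single Turing functional $\Psi$ together with a uniformly $\Sigma^0_2$ sequence $\U_0,\U_1,\dots$ with $\mu(\U_e) \leq 2^{-e-2}$ such that $\Psi(Y)$ is $1$-generic for every $Y \notin \bigcup_e \U_e$; then $\V := 2^\omega \setminus \bigcup_e \U_e$ is a $\Pi^0_2$-class of measure at least $\tfrac12$ with $1\text{-genericity} \leq_\M \V$, and Theorem~\ref{eff-0-1-uni} yields $1\text{-genericity} \ured{2} 2\text{-randomness}$.

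For the lower bound I would argue by contradiction, adapting the majority-vote argument of Theorem~\ref{thm-random-hi}. Suppose uniformly $\Pi^0_1$-classes $\V_0,\V_1,\dots \subseteq 2^\omega$ and a uniformly computable sequence $e_0,e_1,\dots$ witness $1\text{-genericity} \ured{1} 2\text{-randomness}$; some $\V_i$ has positive measure, and after reindexing we may assume it is $\V_0$, with $\mu(\V_0) \geq q$ for a fixed rational $q > 0$. I would build, effectively, a c.e.\ set of strings $W$ that is \emph{dense} (every string has an extension in $W$) but for which the set $\mathcal{C} = \{X \in \V_0 \mid \rho \subseteq \Phi_{e_0}(X)\ \text{for some}\ \rho \in W\}$ has measure at most $q/2$. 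Enumerating all strings as $\rho_0,\rho_1,\dots$, for each $j$ I would put into $W$ one extension $\rho_j^{\ast} \supseteq \rho_j$, of length $|\rho_j| + k_j$ where $k_j$ is chosen large enough that $2^{-k_j} \leq q\,2^{-j-3}$, selected so that $\mu(\{X \in \V_0 \mid \rho_j^{\ast} \subseteq \Phi_{e_0}(X)\}) < q\,2^{-j-2}$. Granting this, $\mathcal{C} = \bigcup_j \{X \in \V_0 \mid \rho_j^{\ast} \subseteq \Phi_{e_0}(X)\}$ has measure at most $\sum_j q\,2^{-j-2} = q/2 < \mu(\V_0)$, so $\V_0 \setminus \mathcal{C}$ has positive measure and hence contains a $2$-random $X$; for this $X$ the set $G := \Phi_{e_0}(X)$ is total and $1$-generic, yet no initial segment of $G$ lies in $W$ while $W$ is dense (so every initial segment of $G$ has an extension in $W$) — which means $G$ is not $1$-generic with respect to the c.e.\ set $W$, a contradiction.

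I expect the main obstacle to be the effective selection of $\rho_j^{\ast}$, mirroring the delicate measure-counting in the proof of Theorem~\ref{thm-random-hi}. Fixing a c.e.\ prefix-free $U$ with $\llbracket U\rrbracket = \overline{\V_0}$, the open set of those $X$ having either an initial segment in $U$ or satisfying $\Phi_{e_0}(X\restriction n)\restriction(|\rho_j|+k_j){\downarrow}$ for some $n$ has measure $1$, since its complement is contained in the non-$2$-randoms (a $2$-random in $\V_0$ is sent to a total function); hence one can effectively search for a level $n$ at which a $(1 - q\,2^{-j-3})$-fraction of the strings $\sigma \in 2^n$ already witnesses membership in this open set. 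Among the witnessing $\sigma$ whose witness is a convergent computation whose value extends $\rho_j$, grouping by the value on the coordinates $|\rho_j|,\dots,|\rho_j|+k_j-1$ splits them into at most $2^{k_j}$ groups of total measure at most $1$; letting $\rho_j^{\ast}$ record the value of a least-measure group yields $\mu(\{X \in \V_0 \mid \rho_j^{\ast} \subseteq \Phi_{e_0}(X)\}) \leq 2^{-k_j} + q\,2^{-j-3} < q\,2^{-j-2}$, as required. Checking that this whole bookkeeping is uniformly computable, so that $W$ is genuinely c.e., is then routine following the template of Theorem~\ref{thm-random-hi}.
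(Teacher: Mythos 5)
Your upper bound is essentially the paper's: extract from the standard proof of Kautz's theorem a $\Pi^0_2$-class of positive measure to which the 1-generics Medvedev-reduce, and invoke Theorem \ref{eff-0-1-uni}. The lower bound, however, is a genuinely different argument. The paper disposes of it in two lines by \emph{reducing} to Theorem \ref{thm-random-hi}: Kurtz's proof that every (weakly) 1-generic is hyperimmune gives a \emph{uniform} (Medvedev) reduction from the non-computably-dominated functions to the 1-generic sets, so if 1-genericity 1-reduced to 2-randomness then, composing, the non-computably-dominated functions would 1-reduce to the 2-randoms, contradicting Theorem \ref{thm-random-hi}. You instead rerun the majority-vote construction from scratch, building a dense c.e.\ set of strings $W$ which no $\Phi_{e_0}(X)$ with $X$ a 2-random in $\V_0$ can meet; the measure bookkeeping (choose $k_j$ with $2^{-k_j} \leq q\,2^{-j-3}$, find a level $n$ capturing a $(1-q\,2^{-j-3})$-fraction, pick a least-measure value on $k_j$ fresh coordinates) is correct and gives a self-contained proof. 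The price is that you are essentially repeating the measure argument already carried out for Theorem \ref{thm-random-hi}, whereas the paper's route highlights the useful general principle that $1$-reducibility to a fixed mass problem propagates backward along Medvedev reductions from it. One small point to make fully explicit in your write-up: since $U$ is merely c.e., the effective search for the level $n$ must simultaneously search for a stage $s$ by which enough of $U$ has been enumerated (as in the proof of Theorem \ref{thm-random-hi}, where $U_{s-1}$ is a prefix-free generating set for $\overline{\W_{s-1}}$); this is what guarantees the construction is a genuine computation and $W$ is genuinely c.e.
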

\begin{proof}
Again, that the uniformity is at most 2 follows from the standard proof
together with our effective 0-1-law (Theorem \ref{eff-0-1-uni}). If it were the case that 1-genericity 1-reduced to 2-randomness, then the non-computably-dominated functions would also 1-reduce to the 2-random sets, because the 1-generic sets uniformly compute functions which are not computably dominated (this follows directly from the proof of Kurtz \cite{kurtz-1983} that every (weakly) 1-generic set is hyperimmune).
This contradicts Theorem \ref{thm-random-hi}.
\end{proof}



\section{Theory of the $1$-uniform degrees as a Brouwer algebra}\label{sec-brouwer-1}

In this section we study the propositional theory of $\M_1$ as a Brouwer algebra. For more background on this, we refer to Sorbi \cite{sorbi-1996}.

As for the Medvedev lattice, the theory of $\M_1$ is not IPC because the top element is join-irreducible. We want to study what happens if we look at factors $\M_1 / \A$, i.e.\ the quotient of $\M_1$ by the principal filter generated by $\A$. Skvortsova \cite{skvortsova-1988} has shown that there is such a factor of the Medvedev lattice for which the theory is IPC. Her techniques were slightly improved in Kuyper \cite{kuyper-2014}. We will show that these techniques can be adapted to prove that there is a factor of $\M_1$ which has as theory IPC (which implies that the theory of $\M_1$ is Jankov's logic $\mathrm{Jan}$, i.e.\ IPC plus the weak law of the excluded middle $\neg p \vee \neg\neg p$, see the proof of \cite[Corollary 5.3]{kuyper-2014}).

The crucial ingredient we need to adapt Skvortsova's techniques to $\M_1$ is the next proposition.

\begin{prop}
Let $\A \in \M_1$ be a Muchnik degree, i.e.\ the degree of some set of functions which is upwards closed under Turing reducibility.
Then $\A$ is meet-irreducible. In fact, for all $\B,\C \in \M_1$ we have
\[\A \to_1 (\B \otimes \C) \ugeq{1} (\A \to_\M \B) \otimes (\A \to_\M \C) \geq_\M (\A \to_1 \B) \otimes (\A \to_1 \C).\]
\end{prop}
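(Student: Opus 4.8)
The plan is to establish the displayed chain of inequalities; meet-irreducibility then follows formally, so I treat that reduction first. Suppose $\A \uquiv{1} \B \otimes \C$. Since $\otimes$ is the meet of $\M_1$, we have $\A \ured{1} \B$ and $\A \ured{1} \C$. Because $\A \oplus 0 \uquiv{1} \A$ and $0$ is the bottom degree, $\A \to_1 \A \uquiv{1} 0$; so applying the chain with these $\B,\C$ (and using that $\geq_\M$ entails $\geq_1$, together with transitivity) gives $(\A \to_1 \B) \otimes (\A \to_1 \C) \ured{1} \A \to_1 (\B \otimes \C) = \A \to_1 \A \uquiv{1} 0$, hence $(\A \to_1 \B) \otimes (\A \to_1 \C) \uquiv{1} 0$. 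Now $0$ is meet-irreducible in $\M_1$: indeed $\mathcal{X} \uquiv{1} 0$ iff $\mathcal{X}$ has a computable member (as $0$ is the bottom), and $0 \conc \mathcal{X} \cup 1 \conc \mathcal{Y}$ has a computable member iff $\mathcal{X}$ or $\mathcal{Y}$ does. Hence $\A \to_1 \B \uquiv{1} 0$ or $\A \to_1 \C \uquiv{1} 0$, i.e.\ $\A \ugeq{1} \B$ or $\A \ugeq{1} \C$, and together with $\A \ured{1} \B,\C$ this gives $\A \uquiv{1} \B$ or $\A \uquiv{1} \C$.

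Two of the three inequalities in the chain are soft. For the second, $(\A \to_\M \B) \otimes (\A \to_\M \C) \geq_\M (\A \to_1 \B) \otimes (\A \to_1 \C)$, by monotonicity of $\otimes$ it suffices to check $\A \to_1 \mathcal{X} \ured{\M} \A \to_\M \mathcal{X}$ for arbitrary $\mathcal{X}$: send $e \conc f \in \A \to_\M \mathcal{X}$ to $u \conc e' \conc f$, where $\{u\}$ is constantly an index of the $\Pi^0_1$-class $\omega^\omega$ and $\{e'\}$ is constantly $e$; then the single class $\omega^\omega$ covers $\A \oplus \{f\}$ and $\Phi_e$ maps it into $\mathcal{X}$, so $u \conc e' \conc f \in \A \to_1 \mathcal{X}$, and this map is computable. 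And the reverse of the first inequality, $\A \to_1 (\B \otimes \C) \ured{1} (\A \to_\M \B) \otimes (\A \to_\M \C)$, holds for every $\A$: since $\M_1$ is a distributive lattice, $\A \oplus \bigl[(\A \to_\M \B) \otimes (\A \to_\M \C)\bigr] \uquiv{1} \bigl[\A \oplus (\A \to_\M \B)\bigr] \otimes \bigl[\A \oplus (\A \to_\M \C)\bigr] \ugeq{1} \B \otimes \C$ (the last step since $\A \oplus (\A \to_\M \B) \geq_\M \B$ and likewise for $\C$), and the Galois property of $\to_1$ then yields the claim.

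The substance lies in the first inequality, $\A \to_1 (\B \otimes \C) \ugeq{1} (\A \to_\M \B) \otimes (\A \to_\M \C)$, equivalently $(\A \to_\M \B) \otimes (\A \to_\M \C) \ured{1} \A \to_1 (\B \otimes \C)$; this is where I would use that $\A$ is a Muchnik degree. Given $g = u \conc e \conc f \in \A \to_1 (\B \otimes \C)$, the classes $\P^1_{\{u\}(i)}$ cover $\A \oplus \{f\}$ and each $\Phi_{\{e\}(i)}$ maps $(\A \oplus \{f\}) \cap \P^1_{\{u\}(i)}$ into $0 \conc \B \cup 1 \conc \C$; using Lemma \ref{lem-inverse-pi} on the clopen conditions ``the first output coordinate is $0$'' and ``it is $1$'', I would refine this to a uniformly $\Pi^0_1$ cover $(\mathcal{Q}_{i,b})_{i,b}$ of $\A \oplus \{f\}$ on which $\Phi_{\{e\}(i)}$ sends the tag-$0$ pieces entirely into $0 \conc \B$ and the tag-$1$ pieces entirely into $1 \conc \C$. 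The main obstacle is then clear: to land $g$ in $0 \conc (\A \to_\M \B) \cup 1 \conc (\A \to_\M \C)$ one must commit $g$ to a single side and produce a genuinely uniform (Medvedev) reduction, yet a priori no single refined piece covers all of $\A \oplus \{f\}$. This is exactly where upward-closedness is essential: since $\A$ is closed under Turing reducibility, hence under the Turing join of finitely many of its members, one may freely adjoin auxiliary data (and jumps) to the relevant oracles without leaving $\A$. The plan is to exploit this to obtain a dichotomy — for each $g \in \A \to_1 (\B \otimes \C)$, a $\Pi^0_1$-amount of information about $g$ suffices to extract, uniformly from $g$, either an element of $\A \to_\M \B$ or an element of $\A \to_\M \C$ — which then furnishes the desired uniformly $\Pi^0_1$ cover of $\A \to_1 (\B \otimes \C)$ together with its witnessing functionals. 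I expect establishing this dichotomy, which is the $\M_1$-analogue of the fact that Muchnik degrees are meet-irreducible in the Medvedev lattice, to be the delicate step, and I would model it on the arguments of \cite{kuyper-2014}.
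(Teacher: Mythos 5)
Your reduction of meet-irreducibility to the displayed chain is correct, and so is your verification of the right-hand inequality $(\A \to_\M \B) \otimes (\A \to_\M \C) \geq_\M (\A \to_1 \B) \otimes (\A \to_1 \C)$; both match the paper's (abbreviated) treatment. Your further observation, via distributivity and the Galois adjunction for $\to_1$, that the left-hand inequality is actually an equality is also correct, though it is not needed for the proposition.

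The gap is that you never prove the left-hand inequality $\A \to_1 (\B \otimes \C) \ugeq{1} (\A \to_\M \B) \otimes (\A \to_\M \C)$ — the only place the hypothesis on $\A$ enters. You correctly identify the target (a uniformly $\Pi^0_1$ cover of $\A \to_1(\B \otimes \C)$ together with functionals that commit each piece to one side and produce a Medvedev reduction there), but you stop at a plan, and the refinement you float via Lemma \ref{lem-inverse-pi} (splitting into tag-$0$ and tag-$1$ preimages) does not by itself resolve the obstacle you yourself point out, nor is it what the paper does. The missing step is the following claim, which the paper establishes for each $u \conc e \conc f \in \A \to_1 (\B \otimes \C)$: there exist a finite string $\sigma$ and an index $i$ such that \emph{every} $h \supseteq \sigma$ satisfies $(h \oplus f) \oplus f \in \P^1_{\{u\}(i)}$, and moreover $\Phi_{\{e\}(i)}$ already converges on the first output coordinate on the initial segment $(\sigma \oplus (f\restriction|\sigma|)) \oplus (f\restriction 2|\sigma|)$. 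Granting this, continuity fixes the tag on the whole cylinder above $\sigma$, and upward closure of $\A$ gives $\sigma \conc g \in \A$ for every $g \in \A$, so the functional $h_0 \oplus h_1 \mapsto \Phi_{\{e\}(i)}(((\sigma \conc h_0) \oplus h_1) \oplus h_1)$ (minus its first bit) together with $f$ is a bona fide element of $\A \to_\M \B$ or of $\A \to_\M \C$ as the tag dictates; the data $(\sigma,i)$ determine the required $\Pi^0_1$ cover of $\A \to_1(\B\otimes\C)$. The claim itself is proved by a finite-extension argument: if no such $(\sigma,i)$ existed, one builds $h = \bigcup_i \sigma_i$ by alternately choosing $\tau \supseteq \sigma_{i-1}$ forcing $(h\oplus f)\oplus f$ out of $\P^1_{\{u\}(i)}$ and appending bits of a fixed $g \in \A$, so that $h\oplus f \geq_T g$ and hence $h\oplus f \in \A$; then $(h\oplus f)\oplus f \in \A \oplus \{f\}$ escapes the cover, contradicting $u \conc e \conc f \in \A \to_1(\B\otimes\C)$. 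This density/forcing step — the $\M_1$-analogue, as you say, of Skvortsova's canonicity of Muchnik degrees in the Medvedev lattice — is the substance of the proof and is what your proposal leaves out.
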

\begin{proof}
Let $\A,\B,\C \in \M_1$ with $\A$ a Muchnik degree. If $\A$ is the degree of the empty mass problem, the result is certainly true; so, we may assume this is not the case.
It is not hard to see that the meet-irreducibility follows from the second claim. That
\[(\A \to_\M \B) \otimes (\A \to_\M \C) \geq_\M (\A \to_1 \B) \otimes (\A \to_1 \C)\]
is also not hard to see; it follows from the fact that a uniform reduction is a special case of an $n$-reduction.
So, it remains to prove that 
\[\A \to_1 (\B \otimes \C) \ugeq{1} (\A \to_\M \B) \otimes (\A \to_\M \C).\]
Let $u \conc e \conc f \in \A \to (\B \otimes \C)$. We claim: there exists a $\sigma \in \omega^{<\omega}$ and an $i \in \omega$ such that for every $h$ extending $\sigma$ we have $(h \oplus f) \oplus f \in \P^1_{\{u\}(i)}$, and such that
\[\Phi_{\{e\}(i)}((\sigma \oplus (f \restriction |\sigma|)) \oplus (f \restriction 2|\sigma|))(0){\downarrow}.\]

For now, let us assume this claim holds and explain how this proves the result. Namely, consider the $\Pi^0_1$-classes $\W_{u,e,\sigma,i}$ where all elements of $\W_{u,e,\sigma,i}$ are of the form $u \conc e \conc f$, and $u \conc e \conc f \in \W_{u,e,\sigma,i}$ if and only if $\{e\}(i)[|\sigma|]{\downarrow}$, $\{u\}(i)[|\sigma|]{\downarrow}$,
\[\Phi_{\{e\}(i)}((\sigma \oplus (f \restriction |\sigma|)) \oplus (f \restriction 2|\sigma|))(0){\downarrow},\]
and $(h \oplus f) \oplus f \in \P^1_{\{u\}(i)}$ for every $h$ extending $\sigma$. Then, if $u \conc e \conc f$ is in both $\A \to (\B \otimes \C)$ and $\W_{u,e,\sigma,i}$, compute
\[a = \Phi_{\{e\}(i)}((\sigma \oplus (f \restriction |\sigma|)) \oplus (f \restriction 2|\sigma|))(0).\]
Let $b$ be an index for the functional sending $h_0 \oplus h_1$ to $\Phi_{\{e\}(i)}(((\sigma \conc h_0) \oplus h_1) \oplus h_1)$ without the first bit. Then it can be directly verified that $a \conc b \conc f \in (\A \to_\M \B) \otimes (\A \to_\M \C)$, where we use that $\A$ is dense because it is upwards closed under Turing reducibility. Furthermore, the $\W_{u,e,\sigma,i}$ cover $\A \to_1 (\B \otimes \C)$ by the claim. Thus, this proves the result.

\bigskip

So, let us prove the claim. Towards a contradiction, assume such $\sigma$ and $i$ do not exist. Fix a $g \in \A$. Then we also know that $\sigma \conc g \in \A$ for every string $\sigma$, again since $\A$ is dense.

We will now construct a $h$ such that $h \oplus f \geq_T g$ (and hence $h \oplus f \in \A$) and such that $(h \oplus f) \oplus f \not\in \bigcup_{i \in \omega} \P^1_{\{e\}(i)}$, a contradiction. We construct $h = \bigcup_{i} \sigma_i$ by a finite extension argument. Let $\sigma_{-1} = \emptyset$. Given $\sigma_{i-1}$, to define $\sigma_i$ we let $\tau \supseteq \sigma_{i-1}$ be the first string such $\llbracket (\tau \oplus (f \restriction |\tau|)) \oplus (f \restriction 2|\tau|) \rrbracket \cap \P^1_{\{u\}(i)} = \emptyset$. Let $\sigma_i = \tau \conc g(i)$.

Note that such a string $\tau$ always exists: namely, if such a string did not exist, we have that $((\sigma_{i-1} \conc g) \oplus f) \oplus f \in \P^1_{\{u\}(i)}$. So, there is some $s \in \omega$ with $\Phi_{\{e\}(i)}((((\sigma_{i-1} \conc g) \oplus f) \oplus f)\restriction 4s)(0){\downarrow}$. Since we assumed the claim is false, we then know that there is some $v \in \omega^\omega$ extending $(\sigma_{i-1} \conc g)\restriction s$ for which $(v \oplus f) \oplus f \not\in \P^1_{\{u\}(i)}$. Thus, there is some $(\sigma_{i-1} \conc g)\restriction s \subseteq \tau \subseteq v$ such that $\llbracket (\tau \oplus (f \restriction |\tau|)) \oplus (f \restriction 2|\tau|) \rrbracket$ is disjoint from $\P^1_{\{u\}(i)}$, as desired.

Note that the entire procedure is $f$-computable, so $h \oplus f$ computes $g$. Furthermore, $(h \oplus f) \oplus f \not\in \bigcup_{i \in \omega} \V_{\{e\}(i)}$ by construction, which proves the claim and hence the result.
\end{proof}

In the terminology of \cite{skvortsova-1988}, we have just shown that the set of Muchnik degrees is \emph{canonical} in $\M_1$. This allows us to show the following result.

\begin{thm}
Let $A$ be a computably independent set, and let
\[\A = \{i \conc f \mid f \geq_T A^{[i]}\}.\]
Then $\Th(\M_1 / \A) = \mathrm{IPC}$.
\end{thm}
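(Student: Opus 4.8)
The plan is to mimic Skvortsova's original argument, as improved in Kuyper \cite{kuyper-2014}, using the fact just established that the set of Muchnik degrees is canonical in $\M_1$. Recall that Skvortsova's strategy for the Medvedev lattice is: start with a finite distributive lattice (or an appropriate Heyting algebra) presentation, and realize the propositional variables as carefully chosen mass problems inside a principal factor $\M/\A$ so that the induced homomorphism into $\M/\A$ is injective on the free Heyting algebra, and then invoke the Jankov--de Jongh style completeness of IPC with respect to such finite algebras. The key technical point in Skvortsova's construction is that certain mass problems of the form $\{i \conc f \mid f \geq_T A^{[i]}\}$ for a computably (algorithmically) independent $A$ behave like ``independent generators'', and the meet-irreducibility/canonicity of the Muchnik degrees is exactly what makes the implication operator interact correctly with the meet in the factor.

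First I would set up $\A$ as in the statement and observe that $\M_1/\A$ is again a Brouwer algebra (principal factors of Brouwer algebras are Brouwer algebras). Then I would recall from \cite{kuyper-2014} the abstract criterion: to show $\Th(\M_1/\A) = \mathrm{IPC}$ it suffices to exhibit, for every finite Kripke frame $F$ (equivalently, every finite Heyting algebra $H$) refuting a non-theorem $\phi$ of IPC, a Brouwer-algebra homomorphism from $H^{\mathrm{op}}$ (or the appropriate dual) into $\M_1/\A$ that preserves enough structure to also refute $\phi$; since $\M_1/\A$ is a single structure one actually wants a single ``generic'' embedding of a suitably universal finite object, and the standard move is to use the $A^{[i]}$-columns to interpret the finitely many variables. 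The heavy lifting is to check that the Brouwer-algebra operations $\oplus, \otimes, \to_1$ computed among the interpreted mass problems inside $\M_1/\A$ match the Heyting-algebra operations in $H$. The $\oplus$ and $\otimes$ parts go through exactly as in the Medvedev case because they are induced by the same set operations and $\leq_1$ restricted to these mass problems reduces to Turing-reducibility facts about the independent columns $A^{[i]}$. The $\to_1$ part is where the previous proposition enters: canonicity of the Muchnik degrees gives $\A' \to_1 (\B \otimes \C) \equiv (\A' \to_\M \B) \otimes (\A' \to_\M \C)$ for Muchnik degrees $\A'$, so that implications of the interpreted variables collapse to Medvedev implications, which were already analyzed by Skvortsova.

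Concretely, the steps I would carry out are: (1) quote the general ``canonical set yields an IPC factor'' machinery from \cite{skvortsova-1988} and \cite{kuyper-2014}, isolating precisely what must be verified about $\M_1$; (2) verify that $\A$ is a valid choice, i.e.\ that the principal factor is non-degenerate and that the relevant sub-mass-problems sit inside it; (3) use computable independence of $A$ to compute the $\leq_1$-relations among the mass problems $\{i \conc f \mid f \geq_T A^{[i]}\}$ and their finite joins/meets, showing these are governed purely by the Boolean combinatorics of the index sets, hence reproduce the target finite Heyting algebra on the lattice reducts; (4) invoke the canonicity proposition to reduce $\to_1$ among these mass problems to $\to_\M$, and cite Skvortsova's computation of $\to_\M$ in her factor; (5) conclude that the induced homomorphism refutes every non-theorem of IPC, while $\mathrm{IPC} \subseteq \Th(\M_1/\A)$ holds automatically because every Brouwer algebra validates IPC.

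The main obstacle I expect is step (3)--(4): one must be careful that passing to the $\M_1$ setting does not introduce new, genuinely non-uniform reductions among the interpreted mass problems that would collapse degrees which stayed distinct in the Medvedev lattice. In other words, the delicate point is that $\leq_1$ is strictly coarser than $\leq_\M$ in general, so I need to check that on the specific mass problems built from a computably independent set the extra power of $\Pi^0_1$-non-uniform choice buys nothing — any $1$-reduction among them can be ``uniformized'' because a $\Pi^0_1$ cover of such a column-style mass problem can be turned into a uniform reduction via a majority/least-index argument on the independent columns. Once that is in place, the canonicity proposition does the rest of the work for implications, and the argument closes along Skvortsova's lines; the parenthetical consequence that $\Th(\M_1) = \mathrm{Jan}$ then follows exactly as in \cite[Corollary 5.3]{kuyper-2014} from join-irreducibility of the top element together with the existence of this IPC factor.
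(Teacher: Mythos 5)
Your proposal takes essentially the same route as the paper: use the canonicity of Muchnik degrees in $\M_1$ established in the preceding proposition, and port the Skvortsova/Kuyper--2014 machinery to $\M_1/\A$. The structure you outline (lattice reduct first, then use canonicity to control $\to_1$, then close via the Jankov--de Jongh completeness) matches the paper's three-bullet sketch, which tracks \cite[Theorem 3.3, Corollaries 4.3 and 4.5, Theorem 1.1]{kuyper-2014}.

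One remark on your anticipated obstacle at steps (3)--(4). You worry that $\leq_1$, being strictly coarser than $\leq_\M$, might introduce new reductions that collapse degrees, and you propose to block this via a majority/least-index ``uniformization'' of $\Pi^0_1$ covers of the columns. The paper's treatment is simpler on both sides of this worry. For the equalities produced by the construction, it suffices to note that Medvedev-equivalences are automatically $1$-equivalences, so the positive side of the Kuyper--2014 construction carries over for free. For the non-reductions (injectivity of the $(\mathcal{P}(I),\supseteq)$-embedding into intervals), the paper observes that the proof of \cite[Theorem 3.3]{kuyper-2014} ``works for every $\M_n$''---meaning the separations there are already Muchnik non-reductions, which a fortiori are non-$1$-reductions; no uniformization argument is needed. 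The genuinely $\M_1$-specific content is exactly the canonicity proposition you correctly flag, which is why the result as stated is restricted to $\M_1$ (and, by the next section, to $\M_n$ with $n \geq 4$) rather than holding for all $n$ by the same argument.
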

\begin{proof}
We give a sketch.
The proof uses the exact same technique as the proof of \cite[Theorem 1.1]{kuyper-2014}. The only real modification is that we need a new proof of the fact that the Muchnik degrees are canonical in $\M_1$, which we have just given.

In a bit more detail, the proof in \cite{kuyper-2014} proceeds as follows:
\begin{itemize}
\item First, \cite[Theorem 3.3]{kuyper-2014} shows that there are certain embeddings of $(\mathcal{P}(I),\supseteq)$ into intervals $\left[\B,\overline{\A}\right]_\M$. It can be directly verified that the proof given there also works for $\M_1$ (in fact, it works for every $\M_n$).
\item Next, \cite[Corollary 4.3]{kuyper-2014} and \cite[Corollary 4.5]{kuyper-2014} extend these embeddings to free Brouwer algebras. This uses the previous fact together with the fact that the set of Muchnik degrees is canonical in the Medvedev lattice. As we have just shown this is also true in $\M_1$; so these two corollaries also hold in $\M_1$.
\item Finally, \cite[Theorem 1.1]{kuyper-2014} combines these facts with some general lattice-theoretic facts and with some computability-theoretic facts. In the proof certain sets are constructed such that certain equalities hold between certain mass problems in the Medvedev lattice; but of course, if things are Medvedev-equivalent they are certainly 1-equivalent. Therefore the proof proceeds in the same way for $\M_1$.\qedhere
\end{itemize}
\end{proof}

\begin{cor}
\[\mathrm{Th}(\M_1) = \mathrm{Jan}.\]
\end{cor}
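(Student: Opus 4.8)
The plan is to deduce this from the theorem just proved, namely that there is a factor $\M_1/\A$ whose propositional theory is exactly $\mathrm{IPC}$, in essentially the same way that the analogous fact for the Medvedev lattice is deduced in \cite[Corollary 5.3]{kuyper-2014}. First I would establish the two inclusions separately. For $\mathrm{Th}(\M_1) \subseteq \mathrm{Jan}$, the key observation is that $\M_1$, like the Medvedev lattice, has a join-irreducible top element: if $\A \oplus \B \uquiv{1} \omega^\omega$, i.e.\ $\A \oplus \B$ has a $1$-reduction to the computable functions (equivalently, $\A \oplus \B$ contains a computable element up to the covering-by-$\Pi^0_1$-classes slack, but in fact a computable element outright), then one of $\A$, $\B$ already contains a computable element, so one of them is $\uquiv{1} \omega^\omega$. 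From join-irreducibility of the top it follows by the standard Brouwer-algebra argument that the weak law of the excluded middle $\neg p \vee \neg\neg p$ is valid in $\M_1$: for any $\A$, $(\neg \A) \oplus (\neg\neg\A)$ evaluates to the top because at least one of $\neg\A$, $\neg\neg\A$ is the top (this is exactly Jankov's original argument, and it only uses join-irreducibility of $1$). Hence every theorem of $\mathrm{Jan} = \mathrm{IPC} + (\neg p \vee \neg\neg p)$ is valid in $\M_1$, giving $\mathrm{Th}(\M_1) \subseteq \mathrm{Jan}$. Here one uses that $\mathrm{Th}(\M_1) \supseteq \mathrm{IPC}$ automatically, since every Brouwer algebra validates $\mathrm{IPC}$.

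For the reverse inclusion $\mathrm{Jan} \subseteq \mathrm{Th}(\M_1)$: suppose $\phi \notin \mathrm{Jan}$. Since $\mathrm{Jan}$ is the logic of the class of Brouwer algebras with join-irreducible top (equivalently, it has the finite model property with respect to such algebras, by Jankov's theorem), and more concretely since $\mathrm{Jan}$ is exactly the intersection of $\mathrm{IPC}$ with the logic of any single Brouwer algebra with join-irreducible $1$ whose $\mathrm{IPC}$-part is all of $\mathrm{IPC}$, it suffices to exhibit one such algebra among the factors of $\M_1$. The factor $\M_1/\A$ from the preceding theorem has propositional theory exactly $\mathrm{IPC}$, hence there is a valuation in $\M_1/\A$ refuting $\phi$ whenever $\phi \notin \mathrm{IPC}$; and when $\phi \in \mathrm{IPC} \setminus \mathrm{Jan}$ one instead uses that $\M_1$ itself has join-irreducible top, so some valuation in $\M_1$ refutes $\phi$ (again this is the standard packaging: $\mathrm{Jan}$ is precisely generated by adding $\neg p \vee \neg\neg p$, and a Brouwer algebra validating $\mathrm{IPC}$ but with join-irreducible top has theory contained in $\mathrm{Jan}$ and containing $\mathrm{IPC}$, so if additionally it has a factor with theory exactly $\mathrm{IPC}$ the theory of the whole algebra is squeezed to be exactly $\mathrm{Jan}$). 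Combining: $\mathrm{IPC} \subseteq \mathrm{Th}(\M_1) \subseteq \mathrm{Jan}$, and the factor $\M_1/\A$ together with join-irreducibility of the top forces equality.

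The cleanest way to write this is simply to mirror the proof of \cite[Corollary 5.3]{kuyper-2014} verbatim, replacing the Medvedev lattice by $\M_1$ throughout, and citing the theorem above for the existence of the factor with theory $\mathrm{IPC}$. The only new ingredient that must be checked for $\M_1$ is that its top element is join-irreducible, which is immediate from the definition of $\ured{1}$ as noted above. I expect no real obstacle here; the single point needing a line of verification is that join-irreducibility of $1$ in $\M_1$ indeed follows from the definition (a $1$-reduction of $\A \oplus \B$ to $\omega^\omega$ forces a member of $\A \oplus \B$, hence of $\A$ or of $\B$, to be computable, since the $\Pi^0_1$-covering of $\omega^\omega$ can be taken to be a single computable string), and that the general Brouwer-algebra lemma "$\mathrm{IPC} \subseteq \mathrm{Th}(\gM) \subseteq \mathrm{Jan}$ together with a factor of theory $\mathrm{IPC}$ implies $\mathrm{Th}(\gM) = \mathrm{Jan}$" is applied correctly — but this is exactly the lemma already used in \cite{kuyper-2014}, so it can simply be invoked.
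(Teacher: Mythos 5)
Your bottom-line plan — mirror the proof of \cite[Corollary 5.3]{kuyper-2014} verbatim, replacing the Medvedev lattice by $\M_1$ and citing the theorem in this paper for the existence of a factor with theory $\mathrm{IPC}$ — is exactly what the paper does; its proof of this corollary is literally the one-line citation. The supplied unpacking, however, has several directional confusions worth flagging. First, the two inclusion labels are swapped: join-irreducibility of the top yields that every theorem of $\mathrm{Jan}$ is valid, i.e.\ $\mathrm{Jan} \subseteq \mathrm{Th}(\M_1)$, not $\mathrm{Th}(\M_1) \subseteq \mathrm{Jan}$; and the factor $\M_1/\A$ with theory $\mathrm{IPC}$ is what drives the other direction. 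Second, the element you verify join-irreducibility for is the \emph{bottom} $\mathbf{0}$ (the $1$-degree of $\omega^\omega$, i.e.\ the mass problems with a computable member), not the top $\mathbf{1}$ (the degree of the empty mass problem); the relevant fact, dual to meet-irreducibility of $\bot$ in a Heyting algebra, is that $\mathbf{1}$ is join-irreducible, which in $\M_1$ amounts to: $\A \oplus \B = \emptyset$ forces $\A = \emptyset$ or $\B = \emptyset$. Relatedly, disjunction in the Brouwer-algebra semantics is interpreted by $\otimes$ (meet), not $\oplus$, and validity means evaluating to $\mathbf{0}$, not to the top. Third, the parenthetical remark that a Brouwer algebra with join-irreducible top ``has theory contained in $\mathrm{Jan}$'' is false as stated — the two-element Boolean algebra has join-irreducible top and theory $\mathrm{CPC} \supsetneq \mathrm{Jan}$; join-irreducibility gives containment in the other direction, and it is the existence of the $\mathrm{IPC}$-factor that caps the theory from above. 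None of this changes the verdict that the intended route coincides with the paper's, but the dualities should be tightened before the argument is written out in full.
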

\begin{proof}
See the proof of \cite[Corollary 5.3]{kuyper-2014}.
\end{proof}

Let us next note that the technique discussed in this section does not work for $n \geq 2$.

\begin{prop}\label{prop-meet-n}
Let $n \in \omega$. If $f,g$ are $\Delta^0_n$, then $C(\{f\}) \otimes C(\{g\}) \equiv_n C(\{f,g\})$.
\end{prop}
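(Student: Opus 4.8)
The plan is to prove the two reductions $C(\{f,g\}) \ured{n} C(\{f\}) \otimes C(\{g\})$ and $C(\{f\}) \otimes C(\{g\}) \ured{n} C(\{f,g\})$ separately. I would first dispose of the cases $n \le 1$, where $\Delta^0_n$ just means computable: then $f$ and $g$ are computable functions, so $C(\{f\}) = C(\{g\}) = C(\{f,g\}) = \omega^\omega$ is the bottom degree, while $0 \conc f$ is a computable member of $C(\{f\}) \otimes C(\{g\}) = 0 \conc C(\{f\}) \cup 1 \conc C(\{g\})$, so the latter mass problem is Medvedev-equivalent to $\omega^\omega$ as well and there is nothing to prove. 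So assume $n \ge 2$.

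The direction $C(\{f,g\}) \ured{n} C(\{f\}) \otimes C(\{g\})$ holds already for Medvedev reducibility: from an element $b \conc h$ of $0 \conc C(\{f\}) \cup 1 \conc C(\{g\})$ one uniformly outputs $h$; if $b = 0$ then $h \geq_T f$ and if $b = 1$ then $h \geq_T g$, so in either case $h \in C(\{f,g\})$. (This is just the general fact that a meet lies below both of its factors.)

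The substantial direction is $C(\{f\}) \otimes C(\{g\}) \ured{n} C(\{f,g\})$. The key observation is that, although ``$h \geq_T f$'' is not a $\Pi^0_n$-condition, it is in a uniform way a countable union of such conditions: for each index $e$ consider the class $\V^f_e = \{ h \mid \Phi_e(h) \text{ is total and } \Phi_e(h) = f \}$, and analogously $\V^g_e$. I would take as the covering family the classes $\V^f_e$ at even positions and the classes $\V^g_e$ at odd positions. This family covers $C(\{f,g\})$: if $h \geq_T f$ then $\Phi_e(h) = f$ for some $e$, and symmetrically for $g$. The witnessing reductions are trivial: from $h \in \V^f_e$ output $0 \conc h$, which lies in $0 \conc C(\{f\}) \subseteq C(\{f\}) \otimes C(\{g\})$ since $h \geq_T f$, and from $h \in \V^g_e$ output $1 \conc h$; these form a uniformly computable sequence of functionals.

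The main obstacle --- indeed the only delicate point --- is to check that the family $(\V^f_e)_e, (\V^g_e)_e$ is uniformly $\Pi^0_n$, and this is exactly where the hypotheses $n \ge 2$ and ``$f,g$ are $\Delta^0_n$'' are used. Totality of $\Phi_e(h)$ is a $\Pi^0_2$-condition on $h$, uniformly in $e$; and, granted totality, $\Phi_e(h) = f$ can be written as $\forall x\, \forall v\, ( \Phi_e(h)(x){\downarrow} = v \to f(x) = v )$, whose matrix is the disjunction of the $\Pi^0_1(h)$-predicate $\neg(\Phi_e(h)(x){\downarrow}=v)$ and the $\Delta^0_n$-predicate ``$f(x) = v$'', hence $\Delta^0_n$ in $h$ (uniformly in $e,x,v$) because $n \ge 2$; so $\Phi_e(h) = f$ is $\Pi^0_n$ in $h$ uniformly in $e$, and conjoining with the $\Pi^0_2$ totality clause again gives $\Pi^0_n$ (again since $n \ge 2$), with an index computable from $e$ (the only non-uniform data being a fixed $\Delta^0_n$-index for the graphs of $f$ and $g$). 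This completes the plan; note that the argument genuinely fails for $n \le 1$, in line with the preceding proposition and the fact that $\M_0$ and $\M_1$ are the ``more uniform'' members of the hierarchy.
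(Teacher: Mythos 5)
Your proposal is correct and is essentially the paper's argument: the paper treats $n\le 1$ as trivial, handles the easy direction by inclusion, and for the substantial direction observes that $C(\{f\})$ and $C(\{g\})$ are $\Sigma^0_{n+1}$ (being upper cones of $\Delta^0_n$ functions) and invokes Remark~\ref{remark-sigma} to use them as the cover. Your writing out the cover explicitly as the uniformly $\Pi^0_n$ classes $\V^f_e,\V^g_e$ (with $\Phi_e(h)$ total and equal to $f$, resp.\ $g$), and checking the complexity by hand, is exactly the content of that remark applied to this case, so the two proofs coincide.
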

\begin{proof}
If $n \leq 1$, this follows from the fact that the bottom element of $\M_n$ is meet-irreducible. So, assume $n \geq 2$.
Clearly $C(\{f\}) \otimes C(\{g\}) \geq_n C(\{f,g\})$; in fact this reduction is even a Medvedev reduction since it is just inclusion. For the converse, note that the upper cone of a $\Delta^0_n$-set is $\Sigma^0_{n+1}$. By Remark \ref{remark-sigma} above we therefore see that $C(\{f\}) \otimes C(\{g\}) \leq_n C(\{f,g\})$ by sending $h \in C(\{f,g\}) \cap C(\{f\})$ to $0 \conc h$ and $h \in C(\{f,g\}) \cap C(\{g\})$ to $0 \conc h$.
\end{proof}

\begin{cor}
The set of Muchnik degrees is not canonical in $\M_n$ for $n \geq 2$.
\end{cor}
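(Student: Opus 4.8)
The plan is to show that canonicity of the set of Muchnik degrees fails already at the level of meet-irreducibility, using Proposition \ref{prop-meet-n}. Recall that what was established in the $\M_1$ case was precisely meet-irreducibility of every Muchnik degree together with the canonicity inequality $\A \to_1 (\B \otimes \C) \ugeq{1} (\A \to_\M \B) \otimes (\A \to_\M \C)$. Thus, for the set of Muchnik degrees to be canonical in $\M_n$ it would in particular have to be the case that every Muchnik degree in $\M_n$ is meet-irreducible; so it suffices, for each $n \geq 2$, to exhibit a Muchnik degree in $\M_n$ which is not meet-irreducible.

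To this end, fix Turing-incomparable sets $f,g$ which are $\Delta^0_n$; such sets exist since $\Delta^0_2 \subseteq \Delta^0_n$ and there are Turing-incomparable c.e.\ (hence $\Delta^0_2$) sets by the Friedberg--Muchnik theorem. Put $\A = C(\{f,g\})$; this is upwards closed under Turing reducibility, so its $n$-degree is a Muchnik degree in $\M_n$. By Proposition \ref{prop-meet-n} we have $\A \uquiv{n} C(\{f\}) \otimes C(\{g\})$, so in $\M_n$ the degree $\A$ is the meet of the degrees of $C(\{f\})$ and $C(\{g\})$.

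It remains to check that this meet is proper, i.e.\ that $\A \ured{n} C(\{f\})$ and $\A \ured{n} C(\{g\})$ are both strict. Each of these two $n$-reductions holds via the inclusion of mass problems ($C(\{f\}) \subseteq \A$, respectively $C(\{g\}) \subseteq \A$), which is even a Medvedev reduction. For strictness, suppose towards a contradiction that $C(\{f\}) \ured{n} \A$, witnessed by a uniformly $\Pi^0_n$ sequence $\V_0,\V_1,\dots$ covering $\A$ and a uniformly computable sequence $e_0,e_1,\dots$. Since $g \in \A$ we have $g \in \V_i$ for some $i$, and then $\Phi_{e_i}(g) \in C(\{f\})$, so $g \geq_T \Phi_{e_i}(g) \geq_T f$, contradicting the incomparability of $f$ and $g$; by symmetry $C(\{g\}) \nured{n} \A$. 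Hence $\A$ is meet-reducible in $\M_n$, and the set of Muchnik degrees is not canonical there.

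The argument is short, and the only point needing any care is the last one --- confirming that the two factors lie strictly above $\A$ --- which is where the Turing-incomparability of $f$ and $g$ is used, together with the observation that an $n$-reduction applied to a single fixed solution still yields an honest Turing reduction of the output to that solution.
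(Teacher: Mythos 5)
Your proof is correct and follows essentially the same route as the paper: take Turing-incomparable $\Delta^0_2$ (or $\Delta^0_n$) functions $f,g$, observe via Proposition \ref{prop-meet-n} that $C(\{f,g\}) \uquiv{n} C(\{f\}) \otimes C(\{g\})$, and note that neither $C(\{f\})$ nor $C(\{g\})$ $n$-reduces (or even Muchnik-reduces) to $C(\{f,g\})$, so the Muchnik degree $C(\{f,g\})$ is not meet-irreducible. Your write-up is a bit more explicit than the paper's about why canonicity forces meet-irreducibility and about why the meet is proper, but the underlying argument is identical.
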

\begin{proof}
Let $f,g$ be two incomparable $\Delta^0_2$-functions. Then $\B \otimes \C \leq_2 \A$ by the previous proposition.
On the other hand, we do not have $\B \leq_w \A$ nor $\C \leq_w \A$ since $f$ and $g$ are incomparable.
\end{proof}

Thus, if we want to study the theory of $\M_n$ for $n \geq 2$, a different technique is needed.

\section{Theory of the $n$-uniform degrees as a Brouwer algebra for $n \geq 4$}\label{sec-brouwer-2}

In Sorbi and Terwijn \cite{sorbi-terwijn-2012}, it is shown that there are factors of the Muchnik lattice which yield IPC. A different proof is given in Kuyper \cite{kuyper-2013-2}. By studying how uniform the reductions in that proof are, we show that such factors in fact exist for $\M_n$ with $n \geq 4$.

\begin{thm}
Let $\A$ be the class of 1-generic $\Delta^0_2$-functions together with the computable functions and let $n \geq 4$. Then $\M_n / \overline{\A}$ has theory IPC.
\end{thm}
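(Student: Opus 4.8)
We give a sketch. The strategy parallels our proof above that $\Th(\M_1/\A) = \mathrm{IPC}$, except that here we build on the proof of the corresponding result for the Muchnik lattice due to Kuyper \cite{kuyper-2013-2} (a reproof of the theorem of Sorbi and Terwijn \cite{sorbi-terwijn-2012}) rather than on the Medvedev-lattice argument of Kuyper \cite{kuyper-2014}. The plan is to revisit that proof and verify that every reduction occurring in it is in fact an $n$-reduction for $n \geq 4$, and not merely a Muchnik reduction; by Remark \ref{remark-sigma} it suffices, for each such reduction, to exhibit a uniformly $\Pi^0_4$ (equivalently $\Sigma^0_5$) cover on each piece of which a single Turing functional works.

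The proof of \cite{kuyper-2013-2} has essentially the same three-part shape as our $\M_1$ argument. In the first part one embeds posets $(\Pow(I),\supseteq)$ into intervals $[\B,\overline{\A}]$ by explicit mass problems; these reductions are essentially inclusions or simple column manipulations, hence Medvedev reductions, and so lift verbatim to $\M_n$. In the last part one combines everything with general lattice-theoretic facts and with a computability-theoretic construction of suitable sets; there every identification between mass problems is a Medvedev-equivalence, hence a fortiori an $n$-equivalence, so once more nothing has to change. Essentially all the work is therefore concentrated in the middle part, where the poset embeddings are extended to homomorphisms out of free Brouwer algebras, together with the global bookkeeping of arithmetical complexity that this entails.

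For the middle part one needs the analogues, inside the factor $\M_n/\overline{\A}$, of the meet-irreducibility and canonicity facts underlying the Muchnik proof. By the corollary after Proposition \ref{prop-meet-n} we cannot appeal to canonicity of the Muchnik degrees once $n \geq 2$, so instead we exploit the arithmetical structure of $\A$: each member of $\A$ is either computable or a $\Delta^0_2$ $1$-generic, with $1$-genericity being a $\Pi^0_3$ condition. Re-running the finite extension and majority vote arguments of \cite{kuyper-2013-2} while keeping track of which case applies and of the relevant column costs essentially one further quantifier, so that the covers witnessing the crucial reductions --- in particular those needed to extend the embeddings to free Brouwer algebras, which involve the implication $\to_n$ --- can be taken uniformly $\Pi^0_4$. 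This is precisely where the hypothesis $n \geq 4$ enters, and it is also the step carrying almost all of the verification effort: the main obstacle is to confirm that each such cover really can be taken $\Pi^0_4$, and that nowhere in the construction --- in particular not where implications become nested inside the free Brouwer algebra --- is a $\Pi^0_5$ cover secretly required. Granting this bookkeeping, the remainder of \cite{kuyper-2013-2} is purely lattice- and Kripke-model-theoretic and carries over unchanged, giving $\Th(\M_n/\overline{\A}) = \mathrm{IPC}$.
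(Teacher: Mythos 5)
Your proposal gets the right high-level idea — revisit the Muchnik-lattice proof from \cite{kuyper-2013-2} and check that the reductions occurring in it are actually $n$-reductions for $n \geq 4$ — but it stops at the roadmap level and leaves out the concrete verification that is the whole point of the argument, and some of your framing does not match the structure of \cite{kuyper-2013-2}.

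The paper does not cast \cite{kuyper-2013-2} into the same ``three-part shape'' as the $\M_1$ argument, and canonicity of Muchnik degrees plays no role in that proof (it is trivially satisfied in the Muchnik lattice and is not the technique used there). Instead, the key observation is the following. The construction in \cite{kuyper-2013-2} produces a function $\alpha: \A \to 2^{<\omega}$, and all the mass problems appearing in the proof are of the form $\alpha^{-1}(C(\sigma))$; these are upwards closed under Turing reducibility. The crucial step is a quantifier count on the graph of $\alpha$ viewed as a partial function $\beta: \omega \to 2^{<\omega}$ on $\Delta^0_2$-indices: being a $\Delta^0_2$-index is $\Pi^0_3$, $1$-genericity of $\{e\}^{\emptyset'}$ is $\Pi^0_3$, Turing comparability of two $\Delta^0_2$-functions is $\Sigma^0_4$, and the splitting step in the construction is effective. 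This yields that the graph of $\beta$ is $\Sigma^0_4$ and hence that each $\alpha^{-1}(C(\sigma))$ is $\Sigma^0_5$. By Remark \ref{remark-sigma}, $\Sigma^0_5 = \Sigma^0_{4+1}$ is exactly the level at which $n$-reductions for $n = 4$ can ``see'' the pieces of a cover, which is where $n \geq 4$ enters. Finally — and this is a step you do not mention at all — because each $\alpha^{-1}(C(\sigma))$ is upwards closed, the join is set intersection, the meet of two $\Sigma^0_{n+1}$ classes is their set union in $\M_n$, and the implication has the explicit form $\A \to_n \B = \{f \mid \forall g \in \A (f \oplus g \in \B)\}$ for all $n$; hence on these particular degrees the $\M_n$-operations and the Muchnik operations literally coincide, which is what lets the lattice-theoretic and Kripke-model-theoretic remainder of \cite{kuyper-2013-2} go through unchanged. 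Your sketch asserts ``covers can be taken $\Pi^0_4$'' and gestures at ``one further quantifier,'' but without identifying $\alpha$, doing the complexity calculation that produces $\Sigma^0_5$, and noting the upwards-closure argument, there is no actual justification for the bound $n \geq 4$ nor for the claim that the rest of the proof carries over.
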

\begin{proof}
This follows by a careful analysis of the proof for the Muchnik lattice in \cite{kuyper-2013-2}. In that proof, a function $\alpha$ from $\A$ to $2^{<\omega}$ satisfying certain properties is constructed. First note that we can see such a function as a partial function $\beta$ from $\omega \to 2^{<\omega}$ instead, identifying functions in $\A$ with their $\Delta^0_2$-indices. Then the graph of $\beta$ is $\Sigma^0_4$:
\begin{itemize}
\item Checking that $e$ is an index for a $\Delta^0_2$-set, i.e. that $\{e\}^{\emptyset'}$ is total, is $\Pi^0_3$.
\item Checking that $\{e\}^{\emptyset'}$ is 1-generic is $\Pi^0_3$: let $W_0,W_1,\dots$ be an effective enumeration of the c.e.\ sets, then $\{e\}^{\emptyset'}$ is 1-generic if and only if
\[\forall e \exists n \exists s (\forall t \geq s (\{e\}^{\emptyset'[t]} \restriction n \in \W_e[s]) \vee
\forall \tau \forall t \geq s (\tau \supseteq \{e\}^{\emptyset'[t]} \to \tau \not\in \W_e[t])).\]
\item In the construction we need to check if $\{e_0\}^{\emptyset'} \leq_T \{e_1\}^{\emptyset'}$ for $e_0,e_1$ which are indices for $\Delta^0_2$-sets, this is $\Sigma^0_4$:
\[\exists a \forall n \exists s \forall t \geq s \left(\{a\}^{\{e_1\}^{(\emptyset' \restriction s)[t]}}(n)[t]\downarrow=\{e_0\}^{\emptyset'[t]}(n)[t]\right).\]
\item In the construction we need to, given $e_0$ and finitely many points already defined, do some kind of splitting to find an $e_1$ satisfying certain properties (this happens in \cite[Theorem 4.3]{kuyper-2013-2}). We can find this index $e_1$ effectively from $e_0$ and the points already defined.
\end{itemize}


Now, any function $f$ is in some $\alpha^{-1}(C(\sigma))$ if and only if $f$ is not in $\A$ (which is $\Pi^0_4$) or if there exists an $e$ with $f = \{e\}^{\emptyset'}$ (which is $\Pi^0_3$)
such that $\beta(e)$ extends $\sigma$ (which is $\Sigma^0_4$, as argued above).
Thus, every $\alpha^{-1}(\sigma)$ is $\Sigma^0_5$. Using Remark \ref{remark-sigma} it is not hard to see that the meet of two $\Sigma^0_{n+1}$-classes is their union in $\M_n$. Furthermore, since each $\alpha^{-1}(C(\sigma))$ is upwards closed, and for upwards closed $\A$ and $\B$ we have that their join is just their intersection and that
\[\A \to_n \B = \{f \in \omega^\omega \mid \forall g \in \A (f \oplus g \in \B)\}\]
for all $n \in \omega+1$, we see that the Muchnik degrees of
$\{\alpha^{-1}(\sigma) \mid \sigma \in 2^{<\omega}\}$ and the $\M_n$-degrees of $\{\alpha^{-1}(\sigma) \mid \sigma \in 2^{<\omega}\}$ for $n \geq 4$ are all pairwise isomorphic. Since these are the only degrees used in the proof in \cite{kuyper-2013-2}, the remainder of the proof is now exactly the same as for the Muchnik lattice.
\end{proof}

\begin{cor}
For $n \geq 4$ we have that $\mathrm{Th}(\M_n) = \mathrm{Jan}$.
\end{cor}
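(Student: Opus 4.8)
The plan is to derive this exactly as the equality $\Th(\M_1)=\mathrm{Jan}$ was derived in Section \ref{sec-brouwer-1}, namely by following the proof of \cite[Corollary 5.3]{kuyper-2014}, with the factor of the Medvedev lattice capturing $\mathrm{IPC}$ replaced by the factor $\M_n/\overline{\A}$ produced by the preceding theorem. Concretely, I would prove the two inclusions $\mathrm{Jan}\subseteq\Th(\M_n)$ and $\Th(\M_n)\subseteq\mathrm{Jan}$ separately, for $n\geq 4$.

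For the first inclusion, recall that $\M_n$ is a Brouwer algebra, so it already validates $\mathrm{IPC}$; hence it suffices to check that $\M_n$ validates the weak law of the excluded middle $\neg p\vee\neg\neg p$. As for the Medvedev and Muchnik lattices, this is a consequence of the top element $\mathbf{1}$ of $\M_n$ being join-irreducible: if $\A\oplus\B$ is $n$-equivalent to the empty mass problem then in fact $\A\oplus\B=\emptyset$ (a uniformly $\Pi^0_n$ cover of a nonempty $\A\oplus\B$ cannot be mapped into $\emptyset$, so $\emptyset$ is the only mass problem $n$-equivalent to it), and therefore $\A=\emptyset$ or $\B=\emptyset$; and in any Brouwer algebra with join-irreducible top the formula $\neg p\vee\neg\neg p$ evaluates to the bottom element under every valuation (see e.g.\ Sorbi \cite{sorbi-1996}).

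For the second inclusion I would use that $\mathrm{Jan}$ is precisely the logic of the finite Brouwer algebras with join-irreducible top. Given $\phi\notin\mathrm{Jan}$, fix such a finite Brouwer algebra $C$ together with a valuation refuting $\phi$. Since the top of $C$ is join-irreducible, deleting it leaves a finite Brouwer algebra $C'$ on which the operations of $C$ (in particular the co-implication) restrict correctly; it then suffices to embed $C'$ as a Brouwer algebra into a suitable interval $[\mathbf{0},\mathbf{b}]$ of $\M_n$ and to send the top of $C$ to $\mathbf{1}$. The join-irreducibility of $\mathbf{1}$ in $\M_n$ makes this gluing a genuine Brouwer-algebra embedding of $C$ into $\M_n$, which transfers the refutation of $\phi$, so $\phi\notin\Th(\M_n)$. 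The embeddings of finite Brouwer algebras into an interval that are needed here are exactly what is encoded in $\Th(\M_n/\overline{\A})=\mathrm{IPC}$, i.e.\ in the preceding theorem. All of this is carried out in detail in \cite[Corollary 5.3]{kuyper-2014} for the Medvedev lattice, and the only properties of the ambient structure that argument uses are that it is a Brouwer algebra with join-irreducible top together with the factor result, all of which $\M_n$ enjoys for $n\geq 4$; so the argument goes through verbatim. I expect no real obstacle here: the one thing worth checking is, as for $\M_1$, that the cited argument invokes nothing particular to $\mathfrak{M}$ beyond these ingredients — a routine verification — while the genuinely new content (the existence of an $\mathrm{IPC}$-factor of $\M_n$ for $n\geq 4$) has already been supplied.
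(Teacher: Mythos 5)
Your proposal follows exactly the route the paper takes: the paper's proof of this corollary is the single line ``See the proof of \cite[Corollary 5.3]{kuyper-2014},'' and your argument is likewise a reduction to that corollary, with the two standing ingredients (join-irreducible top of $\M_n$, and the IPC factor $\M_n/\overline{\A}$ from the preceding theorem) correctly identified. One small caveat on your reconstruction of the second inclusion: the co-implication of $C$ does not literally restrict to $C'=C\setminus\{\bone\}$ (when $\ba\to_C\bb=\bone$ its value in $C'$ is the co-atom of $C$, not $\bone$), so the gluing is slightly more delicate than ``restricts correctly'' suggests; but this is internal to the cited argument and does not affect the reduction.
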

\begin{proof}
See the proof of \cite[Corollary 5.3]{kuyper-2014}.
\end{proof}

Thus, we have dealt with the cases $n \leq 1$ and $n \geq 4$. The author currently does not know how to deal with the cases $n=2$ and $n=3$. However, we conjecture the following.

\begin{conj}
The propositional theories of $\M_2$ and $\M_3$ are $\mathrm{Jan}$; in fact, there are principal factors of $\M_2$ and $\M_3$ which have as propositional theory $\mathrm{IPC}$.
\end{conj}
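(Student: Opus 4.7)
The plan is to emulate the strategy from the $n \geq 4$ case: find a map $\alpha$ from a suitable class $\A$ to $2^{<\omega}$ whose associated classes $\alpha^{-1}(C(\sigma))$ are $\Sigma^0_{n+1}$, for $n=2$ or $n=3$ respectively. Once this is achieved, Remark \ref{remark-sigma} ensures that meets of such classes in $\M_n$ are just unions, while joins and implications among upwards closed mass problems coincide with their Muchnik counterparts, so the sublattice of $\M_n/\overline{\A}$ generated by the $\alpha^{-1}(C(\sigma))$ is isomorphic to the one used in \cite{kuyper-2013-2} and realises a free Brouwer algebra on countably many generators; \cite[Corollary 5.3]{kuyper-2014} then yields $\Th(\M_n) = \mathrm{Jan}$.

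The bottleneck in the $n \geq 4$ argument is the $\Sigma^0_4$-check for Turing reducibility between two $\Delta^0_2$-functions, which pushes every $\alpha^{-1}(\sigma)$ up to $\Sigma^0_5$. The first step is to lower the complexity of the underlying substrate of ``generic-like'' functions on which $\alpha$ is defined. For $\M_3$ a natural attempt is to replace $\Delta^0_2$ $1$-generic functions by $1$-generic paths through a fixed computable tree, or by $\emptyset'$-computable functions with a fixed computable bound on the use of the oracle; such restrictions should bring the complexity of totality, $1$-genericity and, most importantly, the key reducibility check each one quantifier lower, making every $\alpha^{-1}(C(\sigma))$ $\Sigma^0_4$. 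For $\M_2$ one would need an even more restrictive substrate -- for instance weakly $1$-generics along a computable tree with bounded use, or paths through a well-chosen $\Pi^0_1$-class -- to bring the resulting classes down to $\Sigma^0_3$.

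Second, one has to re-examine the splitting argument in \cite[Theorem 4.3]{kuyper-2013-2}. There, given a finite partial assignment on indices and a target index, a new index is produced satisfying certain incomparability and covering properties, and this is exactly where the high-complexity reducibility quantifier originally enters. The aim is to replace the global Turing-reducibility test by a localised splitting on the computable tree carrying the substrate, so that new indices can be produced effectively from parameters of low arithmetical complexity, and to verify that the resulting partial $\beta : \omega \to 2^{<\omega}$ still enjoys the canonicity and density properties that make the induced embedding of the free Brouwer algebra preserve the Brouwer operations in the factor.

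The main obstacle is clearly this second step. It is not at all clear how to simultaneously preserve the richness of the splitting, which is needed to realise sufficiently many incomparable degrees, and to lower the arithmetical cost of verifying the outcome. For $n=2$, where the budget is only $\Sigma^0_3$, the obstruction seems severe enough that a genuinely new technique may be required -- possibly bypassing generics entirely and working directly with arithmetical mass problems whose lattice-theoretic behaviour is engineered from the start -- which is why the statement is left as a conjecture.
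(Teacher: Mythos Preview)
The statement in question is a \emph{conjecture} in the paper, not a theorem: the paper explicitly says that the author does not know how to handle $n=2$ and $n=3$, and offers no proof. So there is nothing to compare your proposal against.

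Your proposal is not a proof either, and you are honest about this: you sketch the natural strategy of lowering the arithmetical complexity of the substrate used in the $n\geq 4$ argument, identify the $\Sigma^0_4$ reducibility check as the bottleneck, and then concede that for $n=2$ the budget is too tight and ``a genuinely new technique may be required''. That is a fair assessment of the situation and is consistent with the paper's own stance. But none of the suggested replacements (computable trees with bounded use, weakly $1$-generics along a $\Pi^0_1$-class) is actually verified to work: you do not show that the splitting of \cite[Theorem 4.3]{kuyper-2013-2} survives the restriction, nor that the resulting $\alpha^{-1}(C(\sigma))$ really land in $\Sigma^0_4$ (respectively $\Sigma^0_3$). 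So what you have written is a research plan with correctly identified obstacles, not a proof; the conjecture remains open exactly as in the paper.
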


\section{Elementary equivalence}\label{sec-elementary}

Finally, in this section we will show that the first-order theories of the $n$-degrees as lattices (or equivalently, as partially ordered sets) are pairwise different. Recall that a \emph{degree of solvability} is a degree of a singleton $\{f\}$. First, we show that Dyment's (Dyment is the maiden name of Skvortsova) definition of the degrees of solvability in the Medvedev lattice from \cite{dyment-1976} works in every $\M_n$.

\begin{prop}\label{prop-solv-def}
For every $n \in \omega +1$, the degrees of solvability in $\M_n$ are definable as exactly those $\A$ for which there is a $\B >_n \A$ such that every $\C >_n \A$ satisfies $\C \geq_n \B$.
\end{prop}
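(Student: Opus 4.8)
The plan is to mimic Dyment's argument from \cite{dyment-1976} (see also the exposition in the surveys), adapting it to the non-uniform setting of $\M_n$. The claim has two directions: first, every degree of solvability $\A = \deg_n(\{f\})$ has the stated property; second, every $\A$ with the stated property is a degree of solvability.

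For the forward direction, fix $\A = \deg_n(\{f\})$. I would take $\B$ to be the degree of $\{g \mid g \geq_T f \text{ and } g \neq f\}$, or more precisely a mass problem consisting of the "proper extensions" of $f$ — concretely $\{f \conc \langle k \rangle \conc h \mid k \in \omega, h \in \omega^\omega\}$ composed suitably with $f$ so that it properly $n$-dominates $\{f\}$ but is still uniformly close to it. The point is: given any $\C >_n \A$, every element $g \in \C$ computes some $h \in \{f\}$, i.e. computes $f$, but $\C \not\leq_n \{f\}$, so no single $g \in \C$ is of minimal information — in fact, using that $\C \neq \{f\}$ in $\M_n$, one shows each $g \in \C$ computes $f$ together with enough extra information to land in $\B$. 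The argument needs to be run uniformly with a $\Pi^0_n$ cover, but since the reduction $\C \geq_n \{f\}$ already provides uniformly $\Pi^0_n$ classes $\V_i$ covering $\C$ with $\Phi_{e_i}(f_i) = f$ for $f_i \in \C \cap \V_i$, one can feed these through to witness $\C \geq_n \B$. The minimality of $\B$ among the $\C >_n \A$ is the heart of this direction and will require care: one must show $\B >_n \A$ (i.e. $\{f\} \nleq_n \B$ is false — rather $\B >_n \A$ strictly) and that $\B$ sits below every strict upper bound.

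For the converse, suppose $\A$ has the property and let $\B$ witness it. I would argue that $\A$ must be the degree of a singleton by showing that any mass problem in $\A$ can be replaced by a singleton. Take any $g \in \A'$ for a representative $\A'$ of $\A$; consider $\C = C(\{g\}) = \deg_n(\{g\}) \vee \A$ or a similar construction. If $\A$ is not already a degree of solvability, one can find two representatives witnessing incomparable "extra" information and use the join/meet structure (together with the fact, from the Brouwer algebra structure, that $\to_n$ is available) to produce a strict upper bound $\C >_n \A$ that does not lie above $\B$, contradicting the property. The cleanest route is probably the contrapositive: if $\A$ is not a degree of solvability, then for every candidate $\B >_n \A$ there are two elements $g_0, g_1$ of a representative of $\A$ such that $\deg_n(\{g_0\})$ and $\deg_n(\{g_1\})$ are both $>_n \A$ (strictly) but are $n$-incomparable, and at least one of them fails to be $\geq_n \B$; this is exactly where one uses that $\A$ has at least two "independent directions."

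The main obstacle I anticipate is the converse direction, specifically pinning down the combinatorial characterization of "not a degree of solvability" in $\M_n$ and checking that the witnessing mass problems genuinely give strict $n$-reductions rather than merely Muchnik reductions — the uniformly $\Pi^0_n$ bookkeeping must be maintained throughout. In the Medvedev case ($n=0$) this is classical, and for $n = \omega$ (Muchnik) it is also known; the uniform-with-$\Pi^0_n$-side-information flavour of $\M_n$ means the reductions built in both directions must carry along explicit $\Pi^0_n$ covers, and verifying that the natural covers (inherited from whichever reduction one starts with) suffice is the technical core. I expect, however, that no new idea beyond Dyment's is needed: the argument is "uniform in $n$" in the sense that it never inspects the value of $n$ beyond using that $\Pi^0_n$ is closed under the relevant effective operations.
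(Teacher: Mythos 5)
Your overall plan—adapt Dyment's argument, carry the $\Pi^0_n$ covers through—matches the paper, but two concrete points in your sketch do not survive inspection.

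In the forward direction, your candidate $\B = \{g \mid g \geq_T f \text{ and } g \neq f\}$ fails to be strictly above $\A = \deg_n(\{f\})$: from $f$ one can uniformly produce $f$ with its first bit flipped, which lies in $\B$, so $\B \leq_\M \A$ and hence $\B \equiv_n \A$. Your alternative ``set of proper extensions of $f$'' is not well-formed ($f$ is infinite, so $f \conc \langle k\rangle \conc h$ does not parse) and ``composed suitably'' hides exactly the missing content. The paper's $\B$ is $\{e \conc g \mid \Phi_e(g) = f \text{ and } g \not\leq_T f\}$: the index $e$ is explicit data in the element, and the requirement $g \not\leq_T f$ is what prevents $\B \leq_\M \A$. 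With this $\B$, if $\C >_n \A$ is witnessed by $(\V_i), (e_i)$, one sends $g \in \V_i \cap \C$ to $e_i \conc g$. (One also needs the observation, implicit in the paper, that no $g \in \C$ with $g \leq_T f$ can occur, else $\C \leq_\M \A$.) Your proposed $\B$ does not have the index built in, so there is no canonical way to produce the reduction $\C \geq_n \B$ from the witnesses for $\C >_n \A$.

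In the converse, your plan of extracting two $n$-incomparable singleton degrees above $\A$ and hoping one misses $\B$ is not a construction: it is not clear that such singletons exist for an arbitrary non-solvability $\A$, and even if they do, both could still be $\geq_n \B$. The paper instead splits into cases. For $n = \omega$ there is a short pointwise argument: $\B \not\leq_w \A$ gives a single $f \in \A$ with $\{f\} \not\geq_w \B$, and since $\{f\} \geq_w \A$, the defining property forces $\{f\} \equiv_w \A$. This pointwise decomposition of $\leq_w$ is exactly what fails for $n < \omega$, so there the paper follows Dyment's priority construction, building $\C = \{x_i \conc f_i \mid i \in \omega\}$ with $f_i \in \A$ and meeting requirements $P_{e,u}$ (to guarantee $\C \not\geq_n \B$) and $R_{e,u}$ (to guarantee $\C >_n \A$), with the $u$-parameter handling the $\Pi^0_n$ cover. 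That explicit diagonalisation—and the case split between $n = \omega$ and $n < \omega$—is the content your sketch leaves out.
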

\begin{proof}
For one direction, given a degree of solvability $\{f\}$, let $\B = \{e \conc g \mid \Phi_e(g) = f \wedge g \not\leq_T f\}$. Then $\B >_n \A$ for every $n \in \omega+1$, and if $\C >_n \A$ is witnessed by $\V_0,\V_1,\dots$ and $e_0,e_1,\dots$, then we can witness $\C \geq_n \B$ by sending $g \in \V_i \cap \C$ to $e_i \conc g$.

For the converse, let us first consider the case $n = \omega$. Let $\A$ and $\B$ be as in the statement of the proposition. Then $\B \not\leq_w \A$ so there is an $f \in \A$ such that, letting $\C = \{f\}$, we have $\C \not\geq_w \B$. Furthermore, we clearly have $\C \geq_w \A$. Now, if $\C > \A$ we would have $\C \geq_w \B$ by our assumption on $\B$, a contradiction. Thus $\A \equiv_w \C$ and $\A$ is therefore a degree of solvability.

Finally, let us consider the converse direction for $n \in \omega$, in which case we follow the proof of \cite{dyment-1976} (see e.g.\ \cite[Theorem 2.3]{sorbi-1996}). That is, given $\A$ which is not a degree of solvability and $\B \not\leq_n \A$ we construct a $\C >_n \A$ with $\C \not\geq_n \B$. We construct $\C$ as a set of the form $\{x_i \conc f_i \mid i \in \omega\}$ with $f_i \in \A$, which ensures that $\C \geq_\M \A$ and hence $\C \geq_n \A$. Our remaining requirements are therefore
\begin{itemize}
\item $P_{e,u}$: $\exists j \in \omega. \{e\}(j){\uparrow} \vee \{u\}(j){\uparrow} \vee \Phi_{\{e\}(j)}(\C \cap \P^n_{\{u\}(j)}) \not\subseteq \B$
\item $R_{e,u}$: $\exists j \in \omega. \{e\}(j){\uparrow} \vee \{u\}(j){\uparrow} \vee \Phi_{\{e\}(j)}(\A \cap \P^n_{\{u\}(j)}) \not\subseteq \C$.
\end{itemize}

The construction is now as in the Medvedev case, so we refer to \cite[Theorem 2.3]{sorbi-1996} for the details.
\end{proof}

For the Medvedev lattice, the following result essentially appears in \cite[Theorem 3.5]{dyment-1976}, except that it is not phrased in terms of definability.

\begin{prop}\label{prop-muchnik-def}
For every $n \in \omega$, the Muchnik degree $\B$ of $\A$ (i.e.\ the degree of $C(\A) = \{f \in \omega^\omega \mid \exists g \in \A (f \geq_T g)\}$) is definable by the formula $\phi(\A,\B)$ given by
\[\forall \{f\} (\{f\} \geq_n \B \to \{f\} \geq_n \A) \wedge \forall \C (\forall \{f\} (\{f\} \geq_n \C \to \{f\} \geq_n \A) \to \C \geq_n \B),\]
where we use the result from Proposition \ref{prop-solv-def} that the degrees of solvability are definable.
In particular, Muchnik reducibility is definable by
\[\psi(\A_0,\A_1) = \forall B_0,B_1 (\phi(\A_0,\B_0) \wedge \phi(\A_1,\B_1) \to \B_0 \leq_n \B_1).\]
\end{prop}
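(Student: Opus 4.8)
The statement to prove is Proposition \ref{prop-muchnik-def}, which claims that the Muchnik degree of $\A$ is definable by the formula $\phi(\A,\B)$, and consequently that Muchnik reducibility is definable by $\psi(\A_0,\A_1)$.

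The plan is as follows. First I would verify that $\phi(\A, \B)$ holds when $\B$ is the Muchnik degree of $\A$, i.e.\ $\B \equiv_n C(\A)$. The first conjunct says every degree of solvability $\{f\}$ above $\B$ is above $\A$: but $\{f\} \geq_n C(\A)$ means (since $\{f\}$ is a singleton, and a singleton $n$-reduces to another mass problem iff it $w$-reduces, because the covering condition is trivially met by the constant sequence $\V_i = \omega^\omega$ and a single functional) that $f$ computes some $g \in \A$, hence $\{f\} \geq_\M \A$, hence $\{f\} \geq_n \A$. For the second conjunct, suppose $\C$ has the property that every degree of solvability above $\C$ is above $\A$; I must show $\C \geq_n \B \equiv_n C(\A)$. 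Take any $f \in \C$; then $\{f\} \geq_w \C$, and so $\{f\} \geq_n \C$ (again using that for the \emph{source} being a singleton is irrelevant — what matters is that $\C \subseteq C(\{f\})$ gives a Medvedev, hence $n$-, reduction). By hypothesis $\{f\} \geq_n \A$, so $\{f\} \geq_w \A$, i.e.\ $f \in C(\A)$. Thus $\C \subseteq C(\A)$, giving $\C \geq_\M C(\A)$ and a fortiori $\C \geq_n C(\A) \equiv_n \B$.

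Next I would show uniqueness: if $\phi(\A, \B)$ and $\phi(\A, \B')$ both hold, then $\B \equiv_n \B'$. From $\phi(\A,\B')$, its first conjunct together with the fact (just established) that $C(\A)$ satisfies "every degree of solvability above it is above $\A$" — wait, more carefully: the second conjunct of $\phi(\A, \B')$ applied to $\C = \B$ needs "every $\{f\} \geq_n \B$ is $\geq_n \A$", which is exactly the first conjunct of $\phi(\A,\B)$; hence $\B \geq_n \B'$. Symmetrically $\B' \geq_n \B$. So $\phi$ defines a unique degree, and by the previous paragraph that degree is the Muchnik degree of $\A$.

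Finally, definability of Muchnik reducibility: $\A_0 \leq_w \A_1$ iff $C(\A_0) \subseteq C(\A_1)$ iff (the Muchnik degree $\B_0$ of $\A_0$) $\leq_n$ (the Muchnik degree $\B_1$ of $\A_1$) — here using that $C(\A_0) \leq_n C(\A_1)$ is equivalent to $C(\A_0) \leq_w C(\A_1)$, which is equivalent to $C(\A_0) \leq_w C(\A_1)$ as sets since both are upward closed, i.e.\ $\A_0 \leq_w \A_1$. Since $\phi(\A_i, \B_i)$ pins down $\B_i$ uniquely up to $n$-equivalence, the formula $\psi(\A_0, \A_1)$ as written correctly expresses this. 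The main obstacle — really the only subtle point — is the repeated use of the fact that for a \emph{singleton} source $\{f\}$, the relations $\leq_n$, $\leq_\M$ and $\leq_w$ all coincide (and that $\C \leq_n C(\{f\})$ reduces to Medvedev reducibility since it is just set inclusion after upward closure), and that this is enough to make the quantification "$\forall \{f\}$" behave like a Muchnik condition; I would state this coincidence explicitly as the key lemma at the start of the proof, since everything else is then bookkeeping following Dyment's original argument for the Medvedev lattice.
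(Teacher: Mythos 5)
Your proof is correct and follows essentially the same Dyment-style argument as the paper; the only structural difference is that you establish uniqueness abstractly (the hypothesis of the second conjunct of $\phi(\A,\B')$ at $\C=\B$ is precisely the first conjunct of $\phi(\A,\B)$, giving $\B \geq_n \B'$, and symmetrically), whereas the paper argues directly that any $\B$ satisfying $\phi(\A,\B)$ is $n$-equivalent to $C(\A)$. These are logically equivalent; your packaging is marginally more modular.

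Two small slips, neither of which damages the argument. Your concluding ``key lemma'' is misstated: for a singleton as the \emph{source} of a reduction $\{f\} \leq_n \B$, the notions $\leq_\M$, $\leq_n$ and $\leq_w$ do \emph{not} coincide in general (e.g.\ for non-arithmetical $f$ they can all differ). What you actually use throughout, and what is true, is that for a singleton as the \emph{target} --- reductions of the form $\A \leq_n \{f\}$, i.e.\ $\{f\} \geq_n \A$ --- the three notions coincide, since the covering condition on $\{f\}$ is trivially met by the single class $\omega^\omega$. Every actual invocation in your proof body is of this singleton-target form, so the argument goes through; only the summary has the roles reversed. Second, in the last paragraph ``$\A_0 \leq_w \A_1$ iff $C(\A_0) \subseteq C(\A_1)$'' should read $C(\A_1) \subseteq C(\A_0)$: the inclusion reverses under upward closure, and it is exactly this reversed inclusion that yields $C(\A_0) \leq_\M C(\A_1)$.
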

\begin{proof}
First, given $\A$, let $\B$ be the Muchnik degree of $\A$. Then clearly $\{f\} \geq_n \B$ implies $\{f\} \geq_n \A$; in fact, $\{f\} \geq_w \B$ implies $\{f\} \geq_\M \B$. Next, let $\C$ be such that for every $f$ with $\{f\} \geq_n \C$ we have $\{f\} \geq_n \A$. Then in particular for every $f \in \C$ we have $f \geq_T g$ for some $g \in \A$, and therefore $f \in \B$. Thus $\phi(\A,\B)$ holds.

Conversely, given $\A,\B$ with $\phi(\A,\B)$, let $\C$ be the Muchnik degree of $\A$. We will show that $\B \equiv_n \C$. First, if $f \in \B$ then $\{f\} \geq_n \B$, so $\{f\} \geq_n \A$ by the first conjunct of $\phi(\A,\B)$. So, $f \geq_T g$ for some $g \in \A$, and therefore we see $\B \subseteq \C$; thus $\C \leq_\M \B$. Finally, apply the second conjunct of $\phi(\A,\B)$ to $\C$ to obtain $\C \geq_n \B$.
\end{proof}

In Proposition \ref{prop-meet-n} we gave a sufficient condition so that the meet of two Muchnik degrees $C(\{f\})$ and $C(\{g\})$ is $n$-equivalent to $C(\{f,g\})$. We now give an example of a situation in which this is not the case.

\begin{prop}\label{prop-meet-n-not}
Let $n \in \omega$ with $n \geq 1$, let $X$ be weak $n$-random, let $Y$ be $n$-random and let $X$ and $Y$ be Turing incomparable (in particular, these conditions are all satisfied if $X \oplus Y$ is $n$-random). Then $C(\{X\}) \otimes C(\{Y\}) \not\leq_n C(\{X,Y\})$.
\end{prop}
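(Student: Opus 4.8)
The plan is to derive a contradiction from the assumption that $C(\{X\}) \otimes C(\{Y\}) \leq_n C(\{X,Y\})$. Suppose such a reduction is witnessed by uniformly $\Pi^0_n$-classes $\V_0,\V_1,\dots$ covering $C(\{X,Y\})$ together with computable indices $e_0,e_1,\dots$. Recall that $C(\{X,Y\}) = C(\{X\}) \cup C(\{Y\})$, and that $C(\{X\}) \otimes C(\{Y\}) = 0\conc C(\{X\}) \cup 1\conc C(\{Y\})$. So for each $i$ and each $g \in \V_i \cap C(\{X,Y\})$, the function $\Phi_{e_i}(g)$ is total and begins with a bit $b$ telling us which of the two cones we have landed in: if $b=0$ then the tail computes $X$, and if $b=1$ then the tail computes $Y$. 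Since $X$ and $Y$ are Turing incomparable, for a fixed $i$ all $g \in \V_i \cap C(\{Y\})$ must give output starting with $1$ (else some oracle computing $Y$ would compute $X$), and all $g \in \V_i \cap C(\{X\})$ must give output starting with $0$.

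Next I would use weak $n$-randomness of $X$ and the covering property to find a single class $\V_i$ that is relevant to $X$. Since $X \in C(\{X,Y\})$ and the $\V_i$ cover $C(\{X,Y\})$, there is some $i_0$ with $X \in \V_{i_0}$; as $X$ is weak $n$-random, the $\Pi^0_n$-class $\V_{i_0}$ cannot be null (a weak $n$-random avoids every null $\Pi^0_n$-class, indeed every $\Pi^0_n$-null class), so $\mu(\V_{i_0}) > 0$. Now the idea is that $\V_{i_0}$ has positive measure, so it contains $n$-random reals; but every $n$-random computes an $n$-DNC function (and in particular is no weaker than many mass problems), so in particular $\V_{i_0}$ contains reals computing $X$-like data only on a measure-zero set. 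More precisely: the set of reals in $\V_{i_0}$ computing $X$ via the fixed functional $\Phi_{e_{i_0}}$ (output bit $0$, tail equal to $X$) is contained in a single Turing cone, hence has measure zero; so $\mu(\V_{i_0} \cap C(\{X\})) = 0$ while $\mu(\V_{i_0}) > 0$. Therefore $\V_{i_0} \cap C(\{Y\})$ is nonempty — in fact, since $\V_{i_0}$ has positive measure it contains an $n$-random $Z$, and such $Z$ cannot compute $X$ (an $n$-random does not compute the weak $n$-random $X$ unless they are Turing equivalent, which is excluded), so $Z \in C(\{Y\})$ forces $Z \geq_T Y$. But a set whose positive-measure worth of members all compute $Y$ would make $Y$ of low enough complexity; concretely, $\mu(\{Z : Z \geq_T Y\}) = 0$ for every non-computable $Y$, so $\mu(\V_{i_0} \cap C(\{Y\})) = 0$ as well. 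Then $\mu(\V_{i_0}) = \mu(\V_{i_0} \cap C(\{X\})) + \mu(\V_{i_0}\cap C(\{Y\})) = 0$ once we note that membership in $C(\{X,Y\})$ up to measure zero is exhausted by these two cones — contradiction with $\mu(\V_{i_0}) > 0$.

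Let me sharpen the argument, since the measure-zero bookkeeping above is the crux. The cleanest route: since $X$ is weak $n$-random, $X$ lies in the positive-measure $\Pi^0_n$-class $\V_{i_0}$. Apply the reduction to $X$ itself: $\Phi_{e_{i_0}}(X)$ is total and, as $X \in C(\{X\})$ (but $X \notin C(\{Y\})$ by incomparability), it must start with bit $0$ and have tail equal to $X$. Now here is the key point — I want to run a majority-vote / measure argument within $\V_{i_0}$ analogous to the one in the proof of Theorem~\ref{thm-random-hi}: because $\V_{i_0}$ has positive measure and the functional $\Phi_{e_{i_0}}$ sends a positive-measure subset of $\V_{i_0}$ (namely the $n$-randoms in it, via the effective $0$-$1$-law applied to tails, or just directly) into $0\conc C(\{X\})$, we could approximate $X$ from below by looking at which tails are forced. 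That is, $\emptyset^{(n)}$ (or something of that complexity) could then compute $X$ by a measure-majority argument over the $\Pi^0_n$-class $\V_{i_0}$, because the measure of a $\Pi^0_n$-class is $\emptyset^{(n)}$-computable and we can compute $X\restriction m$ by finding a condition $\sigma$ with $\mu(\V_{i_0}\restriction \sigma) > 0$ on which $\Phi_{e_{i_0}}(\sigma)$ has already committed its first $m+1$ output values — these committed values must agree with $0\conc X$ since $\sigma$ extends to an $n$-random in $\V_{i_0}$ which lands in $C(\{X\})$. But $X$ is weak $n$-random, hence not $\emptyset^{(n)}$-computable (it is not even computable, and in fact avoids all $\emptyset^{(n-1)}$-effective null sets), giving the contradiction.

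The main obstacle I anticipate is making precise the claim that a positive-measure subset of $\V_{i_0}$ is mapped into $C(\{X\})$ rather than $C(\{Y\})$, and correspondingly that the "committed output" of $\Phi_{e_{i_0}}$ on a positive-measure condition must agree with $0\conc X$. This requires combining: (i) the effective $0$-$1$-law (Theorem~\ref{eff-0-1}) to see that tails of $n$-randoms are dense in any positive-measure $\Pi^0_n$-class, (ii) the fact that an $n$-random $Z \in \V_{i_0}$ cannot compute $X$ unless $Z \equiv_T X$ (so $Z$, being in $C(\{X,Y\})$, in fact sits in $C(\{Y\})$ and computes $Y$) — but then a positive-measure set of reals computes $Y$, contradicting that $Y$ is non-computable, so actually $\V_{i_0}$ contains no $n$-randoms at all, contradicting positive measure. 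Threading these facts in the right order — deciding whether to contradict weak-$n$-randomness of $X$ via a $\emptyset^{(n)}$-computation, or contradict $n$-randomness of $Y$ via a positive-measure cone — is the delicate part; I expect the published proof picks whichever of these is cleanest, most likely the majority-vote-against-$X$ argument mirroring Theorem~\ref{thm-random-hi}.
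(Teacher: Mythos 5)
There is a genuine gap, and it runs through all three of your attempted arguments: you repeatedly reason as if elements of $\V_{i_0}$ (or positive-measure-many of them, or the $n$-randoms in it) are constrained by the reduction, i.e.\ are in $C(\{X,Y\})$ and hence get mapped into $0\conc C(\{X\}) \cup 1\conc C(\{Y\})$. They are not. The reduction only constrains $\Phi_{e_{i_0}}$ on $\V_{i_0} \cap C(\{X,Y\})$, and $C(\{X,Y\})$ is a measure-zero set (both cones are null since $X,Y$ are non-computable). So writing $\mu(\V_{i_0}) = \mu(\V_{i_0}\cap C(\{X\})) + \mu(\V_{i_0}\cap C(\{Y\}))$ is simply false, a ``typical'' (or $n$-random) element of $\V_{i_0}$ need not lie in $C(\{X,Y\})$ at all, and the ``committed output'' of $\Phi_{e_{i_0}}$ on a positive-measure condition inside $\V_{i_0}$ need not agree with $0\conc X$. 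There is a second, independent problem with your proposed majority-vote route: even if you could show $\emptyset^{(n)}$ computes $X$, that is not a contradiction with weak $n$-randomness — weakly $n$-random reals can be $\emptyset^{(n)}$-computable (e.g.\ for $n=1$ there are $\Delta^0_2$ Kurtz randoms). You would need $X \leq_T \emptyset^{(n-1)}$, which your construction does not deliver.

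The paper sidesteps both problems by producing a \emph{single, specific} oracle that is guaranteed to be in $C(\{Y\})$ and hence actually subject to the reduction. Fix $i$ with $X \in \V_i$, and let $s$ be large enough that $\Phi_{e_i}(X)(0)$ converges with use $\leq s$; since $X \not\geq_T Y$, the value must be $0$. By weak $n$-randomness of $X$, the $\Pi^0_n$-class $\V_i \cap \llbracket X\restriction s\rrbracket$ has positive measure; by the effective $0$-$1$-law (Theorem~\ref{eff-0-1}) applied to the $n$-random $Y$, some shift $Z = Y\restriction[k,\infty)$ lies in it. Now $Z \equiv_T Y$, so $Z \in C(\{Y\}) \cap \V_i$ and the reduction applies to $Z$; moreover $Z\restriction s = X\restriction s$, so $\Phi_{e_i}(Z)(0) = 0$ by the same computation. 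Hence $\Phi_{e_i}(Z) \in 0\conc C(\{X\})$, giving $Z \geq_T X$ and so $Y \geq_T X$, contradiction. The crucial move you are missing is shifting $Y$ into a cylinder around $X$ rather than trying to reason about arbitrary positive-measure portions of $\V_{i_0}$.
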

\begin{proof}
Assume towards a contradiction that $C(\{X\}) \otimes C(\{Y\}) \leq_n C(\{X,Y\})$ is witnessed by $\V_0,\V_1,\dots$ and $e_0,e_1,\dots$. Since the $\V_i$ cover $C(\{X\})$ we know there is some $i \in \omega$ with $X \in \V_i$. Fix such an $i$. Determine $s \in \omega$ such that $\Phi_{e_i}(X)[s]{\downarrow}$. Note that then $\Phi_{e_i}(X)[s] = 0$ because $X$ does not compute $Y$. Also, since $X$ is weakly $n$-random we know that $\mu(V_i \cap \llbracket X \restriction s \rrbracket) > 0$, because $X$ is not in any $\Pi^0_n$-class of measure $0$. So, by the effective 0-1-law (Theorem \ref{eff-0-1}) applied to $Y$ we know that $Y \restriction [k,\infty) \in V_i \cap \llbracket X \restriction s \rrbracket$ for some $k \in \omega$. But then $X \restriction s \subseteq Y$ so $\Phi_{e_i}(Y)(0){\downarrow}=0$, so $\Phi_{e_i}(Y) {\downarrow} \in 0 \conc C(\{X\})$. Thus $Y \geq_T X$, a contradiction.
\end{proof}

We now show that $\M_n$ and $\M_m$ are not elementarily equivalent for almost all $n \not= m$; we have to exclude the case $n=0$ and $m=1$.

\begin{thm}\label{thm-el-eq}
Let $n,m \in \omega + 1$ with $n < m$ and $m \geq 2$. Then $\M_n$ and $\M_m$ are not elementarily equivalent.
\end{thm}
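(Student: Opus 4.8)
The plan is to find a first-order property in the language of lattices (or partial orders) that distinguishes $\M_m$ from $\M_n$ whenever $n<m$ and $m\geq 2$. The natural candidate, given the machinery developed in the preceding sections, is a property of the \emph{Muchnik degrees} inside $\M_k$: by Proposition \ref{prop-muchnik-def} the Muchnik degrees, and the Muchnik reducibility relation between them, are definable in each $\M_k$ for $k\in\omega$, and one checks the same works for $k=\omega$. So it suffices to exhibit a first-order sentence $\chi$, built using the defined predicates ``$\A$ is a degree of solvability'' and ``$\A$ is the Muchnik degree of $\B$'' and ``$\B$ is Muchnik-below $\A$'', that holds in $\M_m$ but fails in $\M_n$ for all relevant $n<m$.

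The content of $\chi$ should be: \emph{there exist Muchnik degrees $\B_0$ and $\B_1$, each of the form $C(\{X\})$ for a single function, that are Turing-incomparable (i.e.\ incomparable as Muchnik degrees) but whose $\M_k$-meet $\B_0\otimes\B_1$ equals the Muchnik degree $C(\{X_0,X_1\})$ --- while simultaneously there exist Muchnik degrees $\C_0,\C_1$ of the same shape, Turing-incomparable, whose meet is strictly below $C(\{X_0',X_1'\})$.} By Proposition \ref{prop-meet-n} (with $f,g$ taken $\Delta^0_k$ for appropriate $k$) the first kind of pair exists in $\M_k$ as soon as $k\geq 2$; by Proposition \ref{prop-meet-n-not} (applied with $n=m$, using a weak $m$-random, an $m$-random, and Turing-incomparable, as provided e.g.\ by an $m$-random $X\oplus Y$) the second kind of pair exists in $\M_m$. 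In $\M_n$ with $n\leq 1$, Proposition \ref{prop-meet-n} fails (the bottom is meet-irreducible, and more relevantly the condition ``meet of two singleton Muchnik degrees is $C(\{f,g\})$'' can never fail in the right way, or can always be controlled) --- so the precise statement of $\chi$ has to be engineered so that it separates $m\geq 2$ from all $n<m$, not just from $n\leq 1$. I would therefore refine $\chi$ to quantify over a \emph{counting} of how far down the hierarchy the meet-collapse phenomenon persists, replacing ``$\Delta^0_k$'' by the definable notion of complexity available through the degrees of solvability; concretely, using that Proposition \ref{prop-meet-n} gives collapse for all $\Delta^0_n$ pairs while Proposition \ref{prop-meet-n-not} gives non-collapse for $n$-random-type pairs, one expects that the set of Muchnik degrees $\C$ for which ``$C(\{f\})\otimes C(\{g\})=C(\{f,g\})$ for all $f,g\leq_T\C$'' has a first-order description whose behaviour changes at each level $n$.

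The steps, in order, are: (1) record that Propositions \ref{prop-solv-def} and \ref{prop-muchnik-def} make ``degree of solvability'', ``Muchnik degree of'', and Muchnik reducibility among Muchnik degrees first-order definable in every $\M_k$, $k\in\omega+1$; (2) isolate, using Propositions \ref{prop-meet-n} and \ref{prop-meet-n-not}, a definable distinction between degrees ``of $\Delta^0_n$-complexity'' (where meets of singleton Muchnik degrees always collapse to $C(\{f,g\})$) and higher-complexity degrees (where they need not), getting a first-order characterisation of a level parameter; (3) write down the sentence $\chi_m$ asserting that this parameter can reach value $m$ but the collapse still occurs at all values below, show $\M_m\models\chi_m$ using an $m$-random $X\oplus Y$ via Proposition \ref{prop-meet-n-not} together with $\Delta^0_m$ witnesses via Proposition \ref{prop-meet-n}, and show $\M_n\not\models\chi_m$ for $n<m$ by observing that in $\M_n$ every relevant pair already collapses (by Proposition \ref{prop-meet-n}, since every function is $\Delta^0_n$-or-worse is replaced by: the non-collapse of Proposition \ref{prop-meet-n-not} requires genuine $n$-randomness which, relative to $\M_n$'s reductions, is unavailable at the needed level); (4) handle the edge case $n=0$, $m=1$ being excluded is automatic since the statement requires $m\geq 2$; and (5) handle $m=\omega$ separately if needed, using that $\M_\omega$ is the Muchnik lattice and $n$-randomness phenomena there behave as in the $n\to\infty$ limit.

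The main obstacle is step (2)–(3): making the ``level parameter'' genuinely first-order and genuinely sensitive to \emph{every} $n$, rather than just detecting the jump from $n\leq 1$ to $n\geq 2$. The clean facts available --- Proposition \ref{prop-meet-n} (collapse for $\Delta^0_n$) and Proposition \ref{prop-meet-n-not} (non-collapse for weak-$n$-random paired with $n$-random) --- together separate ``$n\leq 1$'' from ``$n\geq 2$'' fairly directly, but to separate, say, $\M_2$ from $\M_3$ one must exploit that a $2$-random is not $3$-random: the pair witnessing non-collapse in $\M_3$ (a $3$-random $X\oplus Y$) does collapse in $\M_2$ because in $\M_2$ the upper cone of a $\Delta^0_3$ set, or rather the relevant $\Pi^0_3$ class, is $\Sigma^0_2$-definable enough to invoke Remark \ref{remark-sigma} --- wait, that is not quite right, so the real work is to pin down exactly which meets collapse in $\M_n$ but not $\M_m$, and to phrase that collapse-or-not purely in terms of the definable Muchnik-degree structure. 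I expect this to require an auxiliary lemma, in the spirit of Propositions \ref{prop-meet-n} and \ref{prop-meet-n-not}, giving for each $n$ a mass problem whose meet-behaviour is the ``$n$-th'' distinguishing feature, after which assembling $\chi_m$ and verifying it in $\M_m$ versus $\M_n$ is routine bookkeeping with the definable predicates.
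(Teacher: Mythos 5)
Your proposal correctly identifies the ingredients from the paper that matter --- the definability of degrees of solvability (Proposition \ref{prop-solv-def}), of Muchnik degrees and Muchnik reducibility (Proposition \ref{prop-muchnik-def}), and the contrast between Propositions \ref{prop-meet-n} and \ref{prop-meet-n-not} --- and you even isolate the right structural statement (whether $C(\{f\})\otimes C(\{g\})$ is a Muchnik degree). But you then stall precisely where the real content lies, and you say so explicitly: you do not know how to make the ``level parameter'' first-order, and you end by hoping for ``an auxiliary lemma'' that you do not supply. That is the gap, and it is genuine.

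The missing idea is the Shore--Slaman theorem that the Turing jump is first-order definable in the Turing degrees. The degrees of solvability in $\M_k$ form a definable copy of the Turing degrees; combined with Shore--Slaman, the set of degrees of solvability $\{f\}$ with $f\in\Delta^0_m$ is first-order definable inside each $\M_k$, \emph{uniformly in the structure}. This is exactly what lets you write, for a fixed $m\in\omega$, the sentence $\phi_m$: ``for all $f,g\in\Delta^0_m$, $C(\{f\})\otimes C(\{g\})$ is a Muchnik degree.'' Then $\M_m\models\phi_m$ by Proposition \ref{prop-meet-n}, while $\M_n\nmodels\phi_m$ for $n<m$ by Proposition \ref{prop-meet-n-not}, applied to an $n'$-random pair (with $n'=\max(n,1)\leq m-1$) realised inside $\Delta^0_{n'+1}\subseteq\Delta^0_m$ --- this is how one gets sensitivity at every level $n$, resolving your ``$\M_2$ versus $\M_3$'' worry. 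Without Shore--Slaman (or something doing the same definability work) you have no way to quantify over ``$\Delta^0_m$-degrees'' in the language of lattices, and your proposed $\chi_m$ cannot be written down.

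A secondary issue: your treatment of $m=\omega$ (``$n$-randomness phenomena there behave as in the $n\to\infty$ limit'') is not an argument. The paper handles $m=\omega$ by a different, much simpler route: since Muchnik reducibility is definable in $\M_n$ by Proposition \ref{prop-muchnik-def}, the sentence asserting the existence of $\A,\B$ with $\A\leq_w\B$ but $\A\not\leq\B$ (with $\leq$ the native order) holds in $\M_n$ for $n\in\omega$ by Corollary \ref{cor-proper}, yet trivially fails in $\M_\omega$ where the native order \emph{is} $\leq_w$. You should separate this case cleanly rather than gesture at a limit.
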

\begin{proof}
First, if $m = \omega$ this follows from the fact that $\leq_n$ and $\leq_w$ do not coincide, as shown in Corollary \ref{cor-proper}, together with the fact shown in Proposition \ref{prop-muchnik-def} above that Muchnik reducibility is definable in $\M_n$. That is, take the formula $\phi$ which says that there are $\A$ and $\B$ such that $\A \not\leq \B$ while $\A \leq_w \B$, then $\phi$ holds with $\leq$ interpreted as $\leq_n$, but clearly not with $\leq$ interpreted as $\leq_w$.

Next, let $m \in \omega$. By Shore and Slaman \cite{shore-slaman-1999} we know that the jump is definable in the Turing degrees, so in particular the $\Delta^0_n$-degrees are definable in the Turing degrees.
Since the degrees of solvability are definable, as shown in Proposition \ref{prop-solv-def} above, we can therefore express the statement ``for all $f,g \in \Delta^0_m$ we have that $C(\{f\}) \otimes C(\{g\})$ is a Muchnik degree'' by a first-order formula $\phi$. Then $\phi$ holds in $\M_m$ by Proposition \ref{prop-meet-n}. On the other hand, since the Muchnik degree of $C(\{f\}) \otimes C(\{g\})$ is given by $C(\{f,g\})$, we see that $\phi$ does not hold in $\M_n$ by Proposition \ref{prop-meet-n-not}.
\end{proof}

We should note that the result of Shore and Slaman on the definability of the jump used above is very complex, and that it is probably too strong a tool for the simple thing we wish to prove. However, the author currently does not know of an easier example separating the first-order theories of these lattices.

We conclude with the following open question concerning the single case excluded in Theorem \ref{thm-el-eq}.

\begin{question}
Are $\M_0$ and $\M_1$ elementarily equivalent?
\end{question}

\section*{Acknowledgements}

The author wishes to acknowledge a helpful discussion with Takayuki Kihara while they were both visiting Andr\'e Nies at the University of Auckland. This discussion is what introduced the author to the work of Higuchi and Kihara \cite{higuchi-kihara-2014,higuchi-kihara-2014-2}, which was the main motivation for the author to study the topics discussed in this paper.

\newcommand{\noopsort}[1]{}
\providecommand{\bysame}{\leavevmode\hbox to3em{\hrulefill}\thinspace}
\providecommand{\MR}{\relax\ifhmode\unskip\space\fi MR }
\providecommand{\MRhref}[2]{%
  \href{http://www.ams.org/mathscinet-getitem?mr=#1}{#2}
}
\providecommand{\href}[2]{#2}

\end{document}